\documentclass[reqno]{myart}
\usepackage{mathtools}
\usepackage{stmaryrd}% only for trianglelefteqslant
\usepackage[
  warnings-off={mathtools-colon,mathtools-overbracket}]{unicode-math}
\let\dfn\emph% distinguish definition emph from other emph (like `NOT')
\setoperatorfont{\textup}

\AfterEndPreamble{\let\phi\varphi\let\epsilon\varepsilon\let\geq\geqslant\let\leq\leqslant}

\AfterEndPreamble{\renewcommand{\setminus}{\char"2216}}
\def\comp{\vysmwhtcircle}% composite
\def\defeq{\coloneq}% TODO: defeq :=
\newcommand{\blank}{\mathord{-}}
\AfterEndPreamble{%
  \let\div\relax
  \def\div{/}
}
\newcommand{\uL}{\symup{L}}
\newcommand{\uR}{\symup{R}}

\newcommand{\NN}{\symbb{N}}
\newcommand{\ZZ}{\symbb{Z}}
\newcommand{\CC}{\symbb{C}}

\newcommand{\cA}{\symcal{A}}
\newcommand{\cB}{\symcal{B}}
\newcommand{\cC}{\symcal{C}}
\newcommand{\cD}{\symcal{D}}

\newcommand{\cF}{\symcal{F}}
\newcommand{\cG}{\symcal{G}}
\newcommand{\cM}{\symcal{M}}
\newcommand{\cN}{\symcal{N}}

\newcommand{\cS}{\symcal{S}}
\newcommand{\cT}{\symcal{T}}

\newcommand{\cX}{\symcal{X}}
\newcommand{\cY}{\symcal{Y}}

\newcommand{\id}{\textup{id}}

\DeclareMathOperator{\coker}{coker}

\DeclareMathOperator{\im}{im}
\let\lim\relax
\DeclareMathOperator*{\lim}{lim}

\DeclareMathOperator{\Hom}{Hom}
\DeclareMathOperator{\End}{End}
\DeclareMathOperator{\Ext}{Ext}
\DeclareMathOperator{\Tor}{Tor}
\DeclareMathOperator{\gHom}{\underline{Hom}}

\DeclareMathOperator{\GKdim}{GK dim}
\DeclareMathOperator{\add}{add}

\DeclareMathOperator{\proj}{proj}
\DeclareMathOperator{\Proj}{Proj}
\DeclareMathOperator{\qgr}{qgr}
\DeclareMathOperator{\charOp}{char}
\DeclareMathOperator{\GL}{GL}

\DeclareMathOperator{\Ann}{Ann}

\DeclareMathOperator{\ev}{ev}
\DeclareMathOperator{\Pic}{Pic}
\DeclareMathOperator{\Ind}{Ind}
\DeclareMathOperator*{\dlim}{\underrightarrow{\lim}}
\NewDocumentCommand\pair{O{\blank, \blank}}{\langle#1\rangle}

\let\mod\relax
\newcommand{\mod}[1]{\textup{mod-}#1}

\DeclareMathOperator{\qmodop}{qmod}
\newcommand{\qmod}[2]{%
\mathchoice
  {\mod{#1}/#2}
  {\mod{#1}/#2}
  {#1/#2}
  {#1/#2}}

\newcommand{\qmodG}[2][]{\qmod{#2}{\cG_{#1}}}
\newcommand{\qmodF}[2][]{\qmod{#2}{\cF_{#1}}}

\newcommand{\Mod}[1]{\textup{Mod-}#1}

\newcommand{\QQMod}[2]{%
  \mathchoice
    {\Mod{#1}/#2}
    {\Mod{#1}/#2}
    {#1/#2}
    {#1/#2}}

\newcommand{\QModG}[2][]{\QQMod{#2}{\cG_{#1}}}
\newcommand{\QModGG}[1]{\QQMod{#1}{\cG'}}
\newcommand{\QModF}[2][]{\QQMod{#2}{\cF_{#1}}}
\newcommand{\QModFF}[1]{\QQMod{#1}{\cF'}}
\newcommand{\QModFT}[1]{\QQMod{#1}{\cF(T_{#1})}}
\DeclareMathOperator{\depth}{depth}
\newcommand{\dep}[2][]{\depth^{#1}(#2)}

\newcommand{\depF}[2][]{\dep[\cF_{#1}\!]{#2}}
\newcommand{\taut}{\tau^t}
\NewDocumentCommand{\tautR}{s}{%
  \IfBooleanTF#1%
    {\taut_R}% If a star is seen
    {\taut}}% If no star is seen
\NewDocumentCommand{\tautA}{s}{%
  \IfBooleanTF#1%
    {\taut_A}% If a star is seen
    {\taut}}% If no star is seen

\NewDocumentCommand{\etatA}{s}{%
  \IfBooleanTF#1%
    {\eta^t_*}% If a star is seen
    {\eta^t}}% If no star is seen
\makeatletter
\newcommand{\raisemath}[1]{\mathpalette{\raisem@th{#1}}}
\newcommand{\raisem@th}[3]{\raisebox{#1}{$#2#3$}}
\makeatother
\newcommand{\submodule}{\leq}
\newcommand{\supmodule}{\geq}

\makeatletter
\unless\ifbibitem@option
\usepackage[%
  backend=biber,
  style=alphabetic,sorting=nyt,doi=false,url=false,
  giveninits=true,
  minbibnames=3,minalphanames=3,maxalphanames=4,maxbibnames=4,
  citetracker=true
  ]{biblatex}
\defbibheading{bibliography}[References]{%
  \section*{#1}}

\fi
\newcommand\myprintbibliography{%
  \singlespacing%
  \sloppy\hbadness 10000\relax%
  \unless\ifbibitem@option%
    \printbibliography%[heading=bibintoc]%
  \else%
    \input{ref/refs.tex}%
  \fi
  \fussy}
\makeatother

\usepackage{tikz-cd}
\usetikzlibrary{nfold}
\tikzcdset{
  arrow style=Latin Modern,% TODO: it was tikz
  diagrams={>={Straight Barb[scale=0.8]}}% I've forgot what this means :(
}
\tikzset{
  no line/.style={draw=none,
    commutative diagrams/every label/.append style={/tikz/auto=false}},
  from/.style args={#1 to #2}{to path={(#1)--(#2)\tikztonodes}}}
\tikzcdset{
  left adj/.style={rightarrow,shift left=2,},
  right adj/.style={leftarrow,shift right=2,},
  left adj rev/.style={leftarrow,shift left=2,},
  right adj rev/.style={rightarrow,shift right=2,},
}
\usepackage{enumitem}
\setlist{nosep}
\setlist{leftmargin=0pt,labelwidth=0pt,
labelsep=\fontdimen2\font,itemindent=!,align=left}
\setlist[1]{labelindent=\parindent}
\setlist[enumerate,1]{label=(\arabic*)}
\setlist[2]{labelindent=2\parindent}
\setlist[enumerate,2]{label=(\roman*)}
\usepackage{varioref}
\usepackage[draft=false]{hyperref}% maybe draft=false
\usepackage[capitalise,nameinlink,noabbrev]{cleveref}
\usepackage{crossreftools}
\crefname{enumi}{}{}
\crefname{diagram}{diagram}{diagrams}
\crefname{equation}{}{}
\AtBeginDocument{\let\ref\relax
\let\ref\Cref}
\makeatletter
\newcommand{\enumilabel}[2]{%
  \phantomsection
  #1\protected@edef\cref@currentlabel{[enumi][][]#1}\label[enumi]{#2}%
}
\makeatother
\newtheorem{theorem}{Theorem}[section]
\newtheorem{lemma}[theorem]{Lemma}
\AddToHook{env/lemma/begin}{\crefalias{theorem}{lemma}}
\newtheorem{proposition}[theorem]{Proposition}
\AddToHook{env/proposition/begin}{\crefalias{theorem}{proposition}}
\newtheorem{corollary}[theorem]{Corollary}% added by me
\AddToHook{env/corollary/begin}{\crefalias{theorem}{corollary}}
\theoremstyle{definition}
\newtheorem{definition}[theorem]{Definition}
\AddToHook{env/definition/begin}{\crefalias{theorem}{definition}}
\newtheorem{example}[theorem]{Example}
\AddToHook{env/example/begin}{\crefalias{theorem}{example}}

\theoremstyle{remark}
\newtheorem{remark}[theorem]{Remark}
\AddToHook{env/remark/begin}{\crefalias{theorem}{remark}}

\AddToHook{env/hypothesis/begin}{\crefalias{theorem}{hypothesis}}
\numberwithin{equation}{section}
\title[Quotient Morita Theory with Applications]
  {Quotient Morita Theory with Applications}

\author[Chen X.-Y.]{Xuyang Chen}
\address{School of Mathematical Sciences,
  Fudan University, Shanghai 200433, China}
\email{xuyangchen21@m.fudan.edu.cn}
\author[Li H.-N.]{Haonan Li}
\address{School of Mathematics,
  Shanghai University of Finance and Economics,
  Shanghai 200433, China}
\email{lihaonan@mail.shufe.edu.cn}
\thanks{Haonan Li is supported by the Fundamental Research Funds for the Central Universities. Quanshui Wu is supported by the NSFC (Grant No. 12471032).}

\author[Wu Q.-S.]{Quanshui Wu}
\address{School of Mathematical Sciences,
  Fudan University, Shanghai 200433, China}
\email{qswu@fudan.edu.cn}
\subjclass[2020]{Primary 16D90, 16S40, 18E10, 16E30}
\date{September 30, 2026}

\keywords{Quotient category, Gabriel topology, Morita theory, Hopf action, Noncommutative Auslander theorem}

\begin{document}
  \begin{abstract}
    % !TeX root = ../main.tex

We study equivalences between quotient categories of module categories associated with Gabriel topologies.
We establish necessary and sufficient conditions for a Morita context
to induce an equivalence between quotient categories
and characterize such equivalences in terms of Morita contexts
after passing to rings of quotients.
Motivated by Artin--Zhang's noncommutative Serre theorem,
we introduce a notion of ampleness relative to a Gabriel topology
and use it to characterize quotient categories of finitely generated modules under suitable noetherian hypotheses.
As an application, we establish a sufficient condition under which the noncommutative Auslander theorem holds for actions of finite-dimensional
Hopf algebras on AS-regular algebras without assuming that the Hopf algebras are semisimple.

  \end{abstract}

  \maketitle

  \tableofcontents
  % !TeX root = ../main.tex

\section{Introduction}\label{sec:intro}

% 我们先说商范畴以及商范畴之间的等价很有用
% 然后我们再说AZ定理 ample条件
% 引入对一般商范畴的F-progenerator
% 然后我们叙述定理3.9的时候 强调这个拓扑可以取成比如说好的维数函数诱导的拓扑
% 然后我们叙述和PG定理的比较
% 之后我们引入Morita理论 说前面发展的内容派上本质用场了

If $A$ is a standard graded commutative algebra, then the quotient category $\qgr(A)$ of finitely generated graded $A$-modules modulo finite-dimensional modules is equivalent to the category of coherent sheaves on $\Proj(A)$ \cite{serreFaisceauxAlgebriquesCoherents1955}. This equivalence is known as Serre's theorem.
As a foundational step in developing a theory of noncommutative projective schemes,
Artin and Zhang \cite{artinNoncommutativeProjectiveSchemes1994}
extended Serre's theorem to noncommutative noetherian graded algebras.
The graded quotient category $\qgr(A)$ is therefore regarded as a fundamental object of study in noncommutative projective geometry.

% 引入商范畴的话可以从非交换解消开始提, 不过最开始要先提非交换McKay对应, 然后再提QWZ, 再之后才是别的
% 于是我们来研究一般的商范畴等价
% 这之后直接给出Morita理论的定理

% 这里要从MU的ample group action开始引, 然后引到Hopf版本
Recently, in the study of noncommutative (crepant) resolutions of graded singularities, particularly quotient singularities, equivalences between quotient categories have become an important tool for establishing noncommutative analogues of Auslander's theorem and the McKay correspondence
\cite{ueyamaNoncommutativeGradedAlgebras2015,moriAmpleGroupAction2016,chanMcKayCorrespondenceSemisimple2018,chanMcKayCorrespondenceSemisimple2019,baoPertinencyHopfActions2019,baoNoncommutativeAuslanderTheorem2018}.
In particular, \cite{baoPertinencyHopfActions2019} considers
the more general category $\qmodop_n (A)$, defined as the quotient category of finitely generated $A$-modules modulo those of Gelfand--Kirillov dimension $\leq n$.
Furthermore, \cite{qinNoncommutativeQuasiresolutions2019} uses equivalences between the quotient categories $\qmodop_n(A)$ and $\qmodop_n (B)$ to establish a version of Van den Bergh's noncommutative crepant resolution
\cite{vandenberghNoncommutativeCrepantResolutions2004,vandenberghThreedimensionalFlopsNoncommutative2004} for noncommutative algebras,
thereby motivating further research on noncommutative resolutions of noncommutative graded singularities
\cite{hePreresolutionsNoncommutativeIsolated2022,liNoncommutativeResolutionsASGorenstein2026,liNoncommutativeResolutionsNoncommutative2026}.

These considerations motivate a re-examination of quotient categories in a more general setting.
Let $A$ be a ring equipped with a Gabriel topology $\cF$. 
Let $\Mod A$ denote the category of (right) $A$-modules, and let $\mod A$ denote the category of finitely generated $A$-modules.
Recall that an $A$-module $M$ is $\cF$-torsion if every element of $M$ is annihilated by some ideal in $\cF$.
Denote by $\QModF A$ the quotient category of $\Mod A$ modulo $\cF$-torsion modules. 
If $A$ is right noetherian, there is likewise a well-defined quotient category $\qmodF A$ of $\mod A$ modulo finitely generated $\cF$-torsion modules. 
For further details, see \ref{Quotient Categories and Gabriel Topologies}.

A graded analogue is obtained by passing to a graded ring endowed with a graded Gabriel topology
\cite[Sects.\@ II.9 and II.10]{nastasescuGradedRingTheory1982}.

Returning to Artin--Zhang's generalization of Serre's theorem \cite[Thm.\@ 4.5]{artinNoncommutativeProjectiveSchemes1994}, a central ingredient is a noncommutative notion of ample line bundles.
Under an appropriate interpretation, this notion from (noncommutative) projective geometry extends naturally to general quotient categories.

Let $\cC$ be an abelian category.
Suppose that $\cM\in\cC$, $A=\End_{\cC}(\cM)$
and $\cF$ is a Gabriel topology on $A$.
The following two conditions interpret ampleness in the sense of Artin--Zhang.
See the beginning of \ref{subsec:ample-explanation} for precise explanations.
\begin{enumerate}
  \item
    For any $I\in\cF$ there exist $f_1, \dotsc, f_r\in I$
    such that the canonical morphism $(f_i)\colon \cM^{\oplus r}\to\cM$ induced by the $f_i$'s is an epimorphism.
  \item
    For any epimorphism
    $\cX\rightarrow\cY$ in $\cC$,
    $\coker\bigl(\Hom_{\cC}(\cM,\cX)\to\Hom_{\cC}(\cM,\cY)\bigr)$
    is an $\cF$-torsion $A$-module.
\end{enumerate}

If $\cM$ satisfies the preceding two conditions and is a finite generator in $\cC$ (i.e., every object $\cX\in\cC$ admits an epimorphism $\cM^{\oplus r}\to\cX$ for some $r\in\NN$), then $\cM$ is called $\cF$-ample.
When $\cF$ is the trivial topology, an $\cF$-ample object is projective and is a finite generator in $\cC$.
The corresponding version
in a cocomplete category is called a fully $\cF$-ample object (\ref{dfn:progen-large}).

The first result in this paper uses the notion of $\cF$-ample objects to characterize when an abelian category is equivalent to a quotient category.

Let $\pi\colon\Mod A\to \QModF A$ be the canonical quotient functor,
and $\omega$ be the right adjoint of $\pi\colon\Mod A\to \QModF A$.
Recall that an $A$-module $M$ is $\cF$-closed if the natural morphism $M \to \omega\pi (M)$ is an isomorphism.

\begin{theorem}[{{\ref{prop:pi-A-ample,thm:morita-i-finite}}}]
  \label{main-thm-serre}
  Let $\cC$ be an abelian category.
  Suppose that $\cM\in\cC$ and that $A=\End_{\cC}(\cM)$ is right noetherian.
  If $\cF$ is a Gabriel topology on $A$ such that $\cM$ is $\cF$-ample,
  then $A$ is $\cF$-closed, and there is an equivalence of categories $\cC \simeq \qmodF{A}$.

  Conversely, let $A$ be a right noetherian ring. Suppose that $\cF$ is a Gabriel topology on $A$ such that 
  $A$ is $\cF$-closed. Then $\pi(A)$ is $\cF$-ample in $\qmodF{A}$.
\end{theorem}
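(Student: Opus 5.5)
For the forward direction I would work with the functor $\Gamma=\Hom_{\cC}(\cM,\blank)\colon\cC\to\Mod A$ followed by $\pi$, and show that $\pi\Gamma\colon\cC\to\qmodF A$ is an equivalence.

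First, $\Gamma(\cX)$ is finitely generated. Since $\cM$ is a finite generator, there is an epimorphism $\cM^{\oplus r}\to\cX$. By condition (2), the image of $A^r=\Gamma(\cM^{\oplus r})\to\Gamma(\cX)$ has $\cF$-torsion cokernel. This does not yet give finite generation of $\Gamma(\cX)$. So I would apply the same argument to the kernel, getting a presentation $\cM^{\oplus s}\to\cM^{\oplus r}\to\cX\to 0$, and use only the image, which is finitely generated. In other words, I would define the functor as $\cX\mapsto\pi(\coker(A^s\to A^r))$ and check that this agrees with $\pi\Gamma(\cX)$.

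Next, exactness. $\Gamma$ is left exact, and by (2) it is right exact modulo $\cF$-torsion. Hence $\pi\Gamma$ is exact.

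Faithfulness and fullness will come from a quasi-inverse. Given a finitely generated module $N$ with presentation $A^s\xrightarrow{\phi}A^r\to N\to 0$, set $\cX=\coker(\cM^{\oplus s}\to\cM^{\oplus r})$, i.e.\ $\blank\otimes_A\cM$. Right exactness of this tensor construction, combined with exactness of $\pi\Gamma$, gives $\pi\Gamma(N\otimes_A\cM)\cong\pi(N)$ and $\Gamma(\cX)\otimes_A\cM\cong\cX$.

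The key point is that torsion modules are killed by $\blank\otimes_A\cM$. Let $N$ be $\cF$-torsion and finitely generated. Then $N$ is a quotient of $\bigoplus A/I_j$ with $I_j\in\cF$. By (1), $I\otimes_A\cM\to\cM$ is already epi, using only finitely many $f_i\in I$. Hence $A/I\otimes_A\cM=0$. So the tensor functor factors through $\qmodF A$.

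That $A$ is $\cF$-closed means $A\to\omega\pi A$ is an isomorphism. I would get this from
\[
\Hom_{\qmodF A}(\pi A,\pi A)\cong\Hom_{\cC}(\cM,\cM)=A,
\]
which follows once the equivalence is established, together with the compatibility $\omega\pi(A)=\dlim_{I\in\cF}\Hom_A(I,A)$. Injectivity of $A\to\omega\pi A$ amounts to $A$ being $\cF$-torsionfree. Suppose $aI=0$ with $a\in A$ and $I\in\cF$. Then $a\circ(f_i)\colon\cM^{\oplus r}\to\cM$ is zero, and since $(f_i)$ is epi, $a=0$.

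Surjectivity uses (1) and (2) directly. A map $I\to A$ gives maps $g_i=\phi(f_i)\in A$ that are compatible on the relations among the $f_i$. Because $\cM^{\oplus r}\to\cM$ is epi, these $g_i$ should descend to an endomorphism of $\cM$. Checking this compatibility modulo torsion is the main obstacle. I would handle it using the kernel of $(f_i)$ and condition (2) applied to that kernel's generator cover. Noetherianity is needed so that the relevant kernels and images stay finitely generated.

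For the converse, take $\cC=\qmodF A$ and $\cM=\pi A$. Since $A$ is $\cF$-closed, $\End(\pi A)=\Hom_A(A,\omega\pi A)=A$. The object $\pi A$ is a finite generator because every finitely generated $N$ is a quotient of some $A^r$ and $\pi$ is exact.

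Condition (1): for $I\in\cF$, $A/I$ is torsion, so $\pi I\to\pi A$ is an isomorphism. Since $I$ is finitely generated by $f_1,\dots,f_r$ (by noetherianity), $\pi(A^r)\to\pi A$ is epi.

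Condition (2): let $\pi X\to\pi Y$ be an epimorphism. Then
\[
\Hom(\pi A,\pi Y)=\omega\pi Y .
\]
The cokernel of $\omega\pi X\to\omega\pi Y$ is torsion because $\pi$ applied to it vanishes: $\pi\omega\cong\id$, and $\pi$ is exact.
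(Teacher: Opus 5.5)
Your converse direction is correct. For $\cF$-closed $A$ it is also more direct than the paper's proof of \ref{prop:pi-A-ample}, which goes through the equivalence $\overline{(\eta^A)_!}$ so as to cover non-closed $A$ as well.

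The forward direction has a genuine gap at ``So the tensor functor factors through $\qmodF{A}$.'' You have only shown that $T=\blank\otimes_A\cM\colon\mod A\to\cC$ kills finitely generated $\cF$-torsion modules. For a functor that is merely right exact, this is not enough. $T$ descends to the quotient only if it inverts every morphism with torsion kernel and cokernel. For an inclusion $X\hookrightarrow Y$ with torsion cokernel $C$, right exactness only tells you that $T(X)\to T(Y)$ is an epimorphism. Its kernel is the image of the connecting map $\uL_1T(C)\to T(X)$, and you never control that map. This is exactly why \ref{prop:induced-functors} requires $\uL_1F$, and not only $F$, to kill the Serre subcategory; compare the dual example just before it. Your fullness and faithfulness of $\pi\Gamma$, and your first argument for $\cF$-closedness of $A$, are all meant to come from the quasi-inverse $\overline T$. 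So as written the equivalence is not established.

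The step can be repaired with ingredients you already have.
\begin{itemize}
\item Your proof that $A$ is $\cF$-torsion-free works verbatim for every $\Gamma(\cX)$: if $gI=0$ then $g\circ(f_i)=0$ with $(f_i)$ epi.
\item Hence the exact functor $\pi\Gamma$ kills no nonzero object, since $\cM$ is a generator. It is therefore faithful and reflects isomorphisms.
\item The natural map $N\to\Gamma(T(N))$ is an isomorphism for finite free $N$, so right exactness gives $\pi\cong\pi\Gamma T$ on $\mod A$.
\item Therefore $T$ inverts every morphism that $\pi$ inverts, and only at this point does it factor as $\overline T$.
\item From $\pi\Gamma\overline T\cong\id$, faithfulness of $\pi\Gamma$, and essential surjectivity of $\overline T$, you get the equivalence.
\end{itemize}

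Your direct sketch for surjectivity of $A\to\omega\pi(A)$ also puts the difficulty in the wrong place. Neither condition (2) nor anything ``modulo torsion'' is needed. The entries of a cover $\cM^{\oplus s}\to\ker((f_i))$ are genuine relations $\sum_i f_ig_{ij}=0$ in $A$, so $(\phi(f_i))$ factors through $(f_i)$ exactly. What you do need is to first enlarge the $f_i$ to a generating set of $I$, using noetherianity, so that the resulting $h$ agrees with $\phi$ on all of $I$. This is \ref{lem:morita-i-2}.

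For comparison, the paper never builds a tensor quasi-inverse. It proves that every $\Hom_{\cC}(\cM,\cX)$ is $\cF$-closed (\ref{prop:always-closed}) and deduces full faithfulness of $\pi\Gamma$ from finite presentations. It then gets essential surjectivity by lifting idempotents from $\add\pi(A)$ to $\add\cM$.
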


The Popescu--Gabriel theorem \cite{popescuCaracterisationCategoriesAbeliennes1964} states that a Grothendieck category is equivalent to a quotient category of modules for a suitable Gabriel topology.
In \ref{rem:pg}, we explain how this theorem is related to \ref{main-thm-serre}.
To apply \ref{main-thm-serre}, one may first choose a suitable Gabriel topology $\cF$ and then verify that $\cM$ is $\cF$-ample.
For example, if $A$ is a noetherian graded algebra
and $\cF$ is the graded Gabriel topology for which the graded $\cF$-torsion modules are precisely those graded $A$-modules in which every element is annihilated by some $A_{\geq n}$,
then the graded version of \ref{main-thm-serre} yields a characterization of $\qgr(A)$ analogous to that of the aforementioned Artin--Zhang theorem; see \ref{cor:AZ-thm}.

\iffalse
On the other hand, \ref{main-thm-serre} is closely related to
the classical theorem of Popescu and Gabriel \cite{popescuCaracterisationCategoriesAbeliennes1964},
which asserts that an abelian category $\cC$ is equivalent to $\QModF A$
for \emph{some} ring $A$ and \emph{some} Gabriel topology $\cF$ on $A$
if and only if $\cC$ is a Grothendieck category.
We present a new perspective on this result through the notion of $\cF$-ampleness. 

\begin{theorem}[{{\ref{thm:groth-cat-weakest-top}}}]\label{main-thm-PG}
  Let $\cC$ be a Grothendieck category, $\cM$ a generator of $\cC$, and 
  $A=\End_{\cC}(\cM)$.  Suppose that $\cF$ is a Gabriel topology on $A$.
  Then the following are equivalent.
  \begin{enumerate}
    \item $\pi\comp\Hom_{\cC}(\cM,\blank)\colon\cC\to\QModF A$
      gives an equivalence of categories.
    \item $\cF$ is the strongest Gabriel topology
      such that $\Hom_{\cC}(\cM,\cX)$ is $\cF$-closed for $\cX\in\cC$.
    \item $\cF$ is the strongest Gabriel topology
      such that $\cM$ is an $\cF$-admissible generator.
    \item $\cF$ is the weakest Gabriel topology such that
      $\cM$ is $\cF$-compact and $\cF$-projective.
    \item $\cM$ is fully $\cF$-ample.
  \end{enumerate}
\end{theorem}
\fi

We next study equivalences between quotient categories induced by Morita contexts.
Criteria for a Morita context to induce such an equivalence have been developed in studies of the noncommutative Auslander theorem and noncommutative resolutions of singularities
\cite{moriAmpleGroupAction2016,baoPertinencyHopfActions2019,liNoncommutativeResolutionsASGorenstein2026}.
For the Morita context to induce the desired equivalence,
the tensor functors defined by its bimodules must descend to the quotient categories;
equivalently, both the tensor functors and their first Tor functors must send torsion modules to torsion modules (\ref{prop:induced-functors,def:compatible}).
Such compatibility conditions serve as prerequisites for establishing the equivalences in the cited works.
Our criterion does not require the condition on the first Tor functors as a separate hypothesis:
under the hypotheses below,
these functors automatically send torsion modules to torsion modules
(see \ref{enum:tor-torsion}).

We use the standard notion of Morita contexts $(A,B,M,N,\xi,\zeta)$,
where $\xi \colon N \otimes_B M \to A$ and $\zeta \colon M \otimes_A N \to B$ are bimodule morphisms.

\begin{theorem}[{{\ref{thm:equiv-by-Morita-context}}}]\label{main-thm-morita-i}
  Let $(A,B,M,N,\xi,\zeta)$ be a Morita context, and let
  $\cF$ and $\cG$ be Gabriel topologies on $A$ and $B$, respectively.
  Then the Morita context $(A,B,M,N,\xi,\zeta)$ induces an equivalence
  between $\QModF A$ and $\QModG B$ if and only if the following conditions hold.
  \begin{enumerate}
    \item[(1a)]
      $\blank\otimes_B M$ maps $\cG$-torsion modules to
      $\cF$-torsion modules.
    \item[(1b)] $\blank\otimes_A N$ maps $\cF$-torsion modules to
      $\cG$-torsion modules.
    \item[(2a)]
      $\coker(\xi)$ is $\cF$-torsion.
    \item[(2b)]
      $\coker(\zeta)$ is $\cG$-torsion.
  \end{enumerate}
\end{theorem}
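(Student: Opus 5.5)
The plan is first to pin down what ``induces an equivalence'' means. It should mean that the functors $\blank\otimes_B M$ and $\blank\otimes_A N$ descend to functors $\bar F=\pi_{\cF}(\blank\otimes_B M)\omega_{\cG}\colon \QModG B\to\QModF A$ and $\bar G=\pi_{\cG}(\blank\otimes_A N)\omega_{\cF}$. In addition, the natural transformations induced by $\xi$ and $\zeta$, namely $X\otimes_A N\otimes_B M\to X$ and $Y\otimes_B M\otimes_A N\to Y$, should become isomorphisms after applying $\pi$.

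By the compatibility criterion (\ref{prop:induced-functors,def:compatible}), descent requires the tensor functors and their $\Tor_1$ to preserve torsion.

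\textbf{Necessity.} Descent of $\blank\otimes_B M$ forces (1a) directly: if $T$ is $\cG$-torsion then $\pi_{\cG}T=0$, so $\pi_{\cF}(T\otimes_B M)=0$. The same argument gives (1b). For (2a), evaluate the counit at $X=A$. The map $\pi(N\otimes_B M)\to\pi(A)$ is $\pi(\xi)$, and if it is an isomorphism then $\coker\xi$ is $\cF$-torsion. Condition (2b) follows symmetrically.

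\textbf{Sufficiency.} This is the main work, and I would split it into three steps.

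\emph{(i) Torsion image and torsion kernel of $\xi$.} Set $I=\im\xi$, a two-sided ideal of $A$. By (2a), $A/I$ is $\cF$-torsion, so $I\in\cF$. I would then show that $\ker\xi$ is also $\cF$-torsion, using the associativity identities of a Morita context. For $k\in\ker\xi$ and $m\in M$, $n\in N$, one has $k\cdot\xi(n\otimes m)$ computed via $\zeta$ applied inside $N\otimes_B M$, and this yields $k\,I=0$ in $N\otimes_B M$. Hence $\ker\xi$ is annihilated by $I\in\cF$ and is therefore torsion. The same holds for $\zeta$ with $J=\im\zeta\in\cG$.

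\emph{(ii) Torsion preservation by $\Tor_1$, as in \ref{enum:tor-torsion}.} This is the step I expect to be the main obstacle. Let $T$ be $\cG$-torsion. I plan to show that $\Tor_1^B(T,M)$ is $\cF$-torsion. First, $M\otimes_A N\otimes_B M\to M$ is an epimorphism modulo torsion, and its kernel is killed by $I$. Applying $\Tor^B_1(T,\blank)$ to the sequences relating $J$, $B$ and $B/J$, together with the fact that elements of $T$ are killed by ideals in $\cG$, I would show that every element of $\Tor_1^B(T,M)$ is killed by a product of ideals in $\cF$.

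A cleaner route would be to exploit $\Tor_1^B(T,M)\otimes_A N\otimes_B M\to\Tor_1^B(T,M)$ and the balanced identity $\Tor_1^B(T,M)\otimes_A I\to\Tor_1^B(T,M)$. Multiplying by $\xi(n\otimes m)$ factors through $\Tor_1^B(T,M\otimes_A N)\otimes_B M$, and this reduces to $\Tor_1^B(T,B)=0$ modulo the torsion from (i). If this factoring works, the $\Tor_1$-module is annihilated up to torsion by $I\in\cF$ (Gabriel topologies are closed under products), hence it is torsion. The argument for $\Tor_1^A(\blank,N)$ is symmetric.

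\emph{(iii) Conclusion.} Once (ii) holds, both functors descend. The compositions $\bar F\bar G$ and $\bar G\bar F$ are $\pi(\blank\otimes_A N\otimes_B M)\omega$ and the symmetric composite, and by step (ii) they are right exact and compatible. The natural maps $X\otimes_A N\otimes_B M\to X\otimes_A A=X$ have kernel and cokernel obtained by applying $X\otimes_A\blank$ and $\Tor_1^A(X,\blank)$ to $\ker\xi$ and $\coker\xi$. These modules are torsion, and they stay torsion after tensoring because $\ker\xi$ and $\coker\xi$ are annihilated, as bimodules, by $I\in\cF$. So the transformations become isomorphisms in $\QModF A$, and symmetrically in $\QModG B$, which gives the equivalence.
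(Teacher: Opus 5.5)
Your proposal is correct and follows the paper's route. Steps (i) and (ii) are \ref{enum:ker-torsion} and \ref{enum:tor-torsion}, and step (iii) directly reproves what the paper cites as \ref{prop:def-induce-equiv}. The factoring you hedge on in (ii) does work, and exactly, for every $B$-module $T$ with no ``modulo torsion'' correction: by $x(n,m)=[x,n]m$, right multiplication by $(n,m)$ on $M$ is the composite of left $B$-linear maps $M\xrightarrow{x\mapsto x\otimes n}M\otimes_A N\xrightarrow{\zeta}B\xrightarrow{b\mapsto bm}M$, so on $\Tor_1^B(T,M)$ it factors through $\Tor_1^B(T,B)=0$ (the paper writes this as $r_a=\beta\alpha$ through $B^{\oplus r}$).
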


The preceding theorem gives a criterion for a Morita context to induce an equivalence of quotient categories.
We show that, when the underlying rings are closed with respect to their Gabriel topologies,
every such equivalence arises from a Morita context.
This extends the Morita theory for graded quotient categories $\qgr$
developed in \cite{liNoncommutativeResolutionsASGorenstein2026}
to general quotient categories.

\begin{theorem}[{{\ref{cor:quot-equiv-conseq-closed}}}]\label{main-thm-morita-ii}
  Let $A$ and $B$ be rings equipped with Gabriel topologies $\cF$ and $\cG$,
  respectively. Suppose that $A$ is $\cF$-closed and $B$ is $\cG$-closed.
  Assume that there is a pair of mutually quasi-inverse functors
  \begin{equation*}
    % \overline F\colon \QModF A \simeq \QModG B\reflectbox{$\colon$} \overline G.
    \overline F\colon \QModF A \simeq \QModG B\colon \overline G.
  \end{equation*}
  Let $M\defeq \omega_{\cF} \overline{G} \pi_{\cG}(B)$ and $N\defeq\omega_{\cG} \overline{F} \pi_{\cF}(A)$.
  Then the following hold.
  \begin{enumerate}
    \item As rings, $B\cong\End_A(M)$ and $A\cong\End_B(N)$.
      Via these isomorphisms, $M$ is a $B$-$A$-bimodule
      and $N$ is an $A$-$B$-bimodule.
    \item As bimodules, $N\cong\Hom_A(M, A)$
      and $M\cong\Hom_B(N, B)$.
    \item%\label{enum:quot-equiv-conseq-ample-closed}
      $\pi_{\cF}(M)$ is fully $\cG$-ample,
      and $\pi_{\cG}(N)$ is fully $\cF$-ample.
    \item $\overline F$ and $\overline G$ are induced by a Morita context
      $(A, B, M, N, \xi, \zeta)$.
  \end{enumerate}
\end{theorem}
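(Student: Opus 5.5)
The plan is to transport everything through $\omega$ and use the fact that on closed modules the quotient functor is fully faithful. Since $A$ is $\cF$-closed, $A\cong\omega_{\cF}\pi_{\cF}(A)$. Hence for any $X\in\Mod A$ we have
\[
\Hom_{\QModF A}(\pi_{\cF}A,\pi_{\cF}X)\cong\Hom_A(A,\omega_{\cF}\pi_{\cF}X)\cong\omega_{\cF}\pi_{\cF}X .
\]
The analogous statement holds for $B$. Also, $\omega$ is fully faithful, since $\pi\omega\cong\id$.

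\textbf{Step (1).} First,
\[
\End_A(M)=\End_A(\omega_{\cF}\overline G\pi_{\cG}B)\cong\End(\overline G\pi_{\cG}B)\cong\End(\pi_{\cG}B)\cong\End_B(\omega_{\cG}\pi_{\cG}B)=\End_B(B)=B,
\]
using full faithfulness of $\omega$ and of $\overline G$ and closedness of $B$. The same argument gives $A\cong\End_B(N)$. The bimodule structures are then the obvious ones: $B$ acts on $M$ from the left through this isomorphism.

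\textbf{Step (2).} Compute
\[
\Hom_A(M,A)\cong\Hom_A(\omega_{\cF}\overline G\pi_{\cG}B,\omega_{\cF}\pi_{\cF}A)\cong\Hom(\overline G\pi_{\cG}B,\pi_{\cF}A)\cong\Hom(\pi_{\cG}B,\overline F\pi_{\cF}A)\cong\omega_{\cG}\overline F\pi_{\cF}A=N.
\]
Each isomorphism is natural in the endomorphisms of $\pi_{\cG}B$ and of $\pi_{\cF}A$, so it respects the bimodule structures. The same argument gives $M\cong\Hom_B(N,B)$.

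\textbf{Step (3).} Here I read the statement as saying that $\overline G\pi_{\cG}(B)\cong\pi_{\cF}(M)$ is fully ample relative to its endomorphism ring $B$ with topology $\cG$. Since $\overline F$ is an equivalence, this is equivalent to $\pi_{\cG}(B)$ being fully $\cG$-ample in $\QModG B$. That is the converse direction of \ref{main-thm-serre} in its large (cocomplete) form. So I invoke the fully ample analogue of \ref{prop:pi-A-ample}, which needs only $B$ to be $\cG$-closed and no noetherianity. If that version is not available, I check the conditions directly:
\begin{enumerate}
\item for $I\in\cG$, the cokernel of $\pi(I^{(\ldots)})\to\pi(B)$ is $\pi(B/I)=0$;
\item for an epimorphism $\pi X\to\pi Y$, the cokernel of $\omega\pi X\to\omega\pi Y$ is $\cG$-torsion, because $\pi$ of it vanishes;
\item $\pi B$ generates, since $\pi$ is exact and preserves coproducts.
\end{enumerate}

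\textbf{Step (4).} Define $\xi\colon N\otimes_B M\to A$ as the evaluation $\Hom_A(M,A)\otimes_B M\to A$ via (2), and define $\zeta$ similarly. Associativity of the context is formal. It remains to show $\overline F\cong\pi_{\cG}(\blank\otimes_A N)$ and $\overline G\cong\pi_{\cF}(\blank\otimes_B M)$; I expect this to be the main obstacle. Both sides are colimit-preserving functors from $\Mod A$ (for instance $\overline F\pi_{\cF}$ and $\pi_{\cG}(\blank\otimes_A N)$). They agree on $A$: indeed $\pi_{\cG}(A\otimes_A N)=\pi_{\cG}N=\pi_{\cG}\omega_{\cG}\overline F\pi_{\cF}A\cong\overline F\pi_{\cF}A$. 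This identification is compatible with the left $A$-action by Step (1). By the Eilenberg--Watts theorem, a cocontinuous functor out of $\Mod A$ is determined by its value on $A$ with its left $A$-action, so the two functors are isomorphic. Passing through $\pi$ then gives the induced functors. This in turn shows, by \ref{main-thm-morita-i}, that conditions (1a)--(2b) hold, so the context genuinely induces the equivalence.
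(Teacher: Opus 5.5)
Your Steps (1)--(3) are essentially the paper's argument. The paper obtains them from \ref{thm:morita-type}, where $B_{\cG}=B$ and $\cG'=\cG$ because $B$ is $\cG$-closed. Your fallback check in Step (3) omits $\cG$-compactness, but citing \ref{prop:pi-A-ample} covers it. Your Eilenberg--Watts argument that $\overline F\pi_{\cF}\cong\pi_{\cG}(\blank\otimes_A N)$ and $\overline G\pi_{\cG}\cong\pi_{\cF}(\blank\otimes_B M)$ is also correct. It is a neat substitute for the paper's route through the equivalence $\Hom_A(M,\blank)\colon\Mod(A,\cF)\simeq\Mod(B,\cG)$ and \ref{lem:adjunction-diagram}.

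The gap is the last sentence of Step (4). \ref{main-thm-morita-i} characterizes when the context induces an equivalence, so invoking it to obtain (2a) and (2b) is circular. Nor do (2a) and (2b) follow from knowing that the tensor functors induce $\overline F$ and $\overline G$. That knowledge yields (1a) and (1b) via \ref{prop:induced-functors}, but says nothing about the particular maps $\xi$ and $\zeta$. For example, the context $(A,A,A,A,0,0)$ with the trivial topology $\{A\}$ has tensor functors inducing the identity equivalence, yet $\coker(\xi)=A$ is not torsion. Being induced by the Morita context means that $\pi(\blank\otimes\xi)$ and $\pi(\blank\otimes\zeta)$ are the quasi-inverse isomorphisms. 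So the torsion-ness of $\coker(\xi)$ and $\coker(\zeta)$ is precisely the content of (4), and you never prove it.

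The paper supplies it via \ref{cor:ample-implies-context-equiv}. Since $\theta\colon N\to\Hom_A(M,A)$ is an isomorphism and $A$ is $\cF$-closed, $\xi=\ev^A\circ(\theta\otimes\id_M)$. Moreover, $\pi_{\cF}(\ev^{\omega(\cX)})$ is identified with the counit of the adjunction $(\overline G,\overline F)$, hence is an isomorphism; \ref{prop:def-induce-equiv} then finishes.

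Within your setup there is also a short direct fix. The object $\pi_{\cF}(M)\cong\overline G\pi_{\cG}(B)$ generates $\QModF A$. Because $A$ is $\cF$-closed, every morphism $\pi_{\cF}(M)\to\pi_{\cF}(A)$ equals $\pi_{\cF}(f)$ for some $f\in\Hom_A(M,A)$. Choose $f_\lambda$ with $(\pi_{\cF}(f_\lambda))_\lambda$ epic. The image of $(f_\lambda)\colon M^{\oplus\Lambda}\to A$ lies in $\im(\xi)$, so exactness of $\pi_{\cF}$ gives $\pi_{\cF}(\coker\xi)=0$. Argue symmetrically for $\zeta$, or use \ref{prop:tensor-equiv-implies-context-equiv}(3).

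Both routes need the six-tuple to be an honest Morita context with $m\mapsto\zeta(m\otimes\blank)$ an isomorphism onto $\Hom_B(N,B)$. Here your claim that associativity is formal is too quick. If the two isomorphisms of Step (2) are chosen independently, twisting $M\cong\Hom_B(N,B)$ by a central unit $u\neq 1$ of $B$ destroys $[m',n]m=m'(n,m)$. Associativity does hold when both isomorphisms come from a single adjoint equivalence (by naturality of the counit and a triangle identity), and you should say so. The paper sidesteps this by setting $\zeta(m\otimes n)=m\,\theta(n)(\blank)\in\End_A(M)=B$, which is a Morita context by construction. It then proves with \ref{enum:quot-equiv-conseq-hom} that $m\mapsto\zeta(m\otimes\blank)$ is an isomorphism $M\to\Hom_B(N,B)$.
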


Let $A\to A'$ be a ring homomorphism, and let $\cF$ be a Gabriel topology on $A$.
Then $\cF$ induces a Gabriel topology $\cF'$ on $A'$ such that an $A'$-module is $\cF'$-torsion
if and only if it is $\cF$-torsion when regarded as an $A$-module.
The assumption that $A$ and $B$ are closed in \ref{main-thm-morita-ii} can be dropped by replacing $(A,\cF)$ and $(B, \cG)$ with their respective rings of quotients
$A_{\cF}$ and $B_{\cG}$, equipped with the induced topologies $\cF'$ and $\cG'$ (see \ref{thm:morita-type,cor:quot-equiv-conseq-closed}).

The application considered here concerns the noncommutative Auslander theorem.
In the classical setting, if $S=\CC[x_1, \dotsc, x_n]$ and $G\subseteq\GL_n(\CC)$ is a finite subgroup
containing no pseudo-reflections, then the natural map
$S\#G\to\End_{S^G}(S)$ is an isomorphism
(\cite[Thm.\@ 5.15]{leuschkeCohenMacaulayRepresentations2012}).
For a finite-dimensional Hopf algebra $H$ acting on an algebra $S$,
the analogous Auslander map is
\begin{equation}\label{eq:auslander-map}
  \psi\colon S\# H\to\End_{S^H}(S),\quad s\#h\mapsto(x\mapsto s(h\rightharpoonup x)),
\end{equation}
where $S^H$ is the invariant subalgebra and $S\# H$ is the smash product.
The noncommutative Auslander theorem is said to hold for this action
if the Auslander map is bijective.

Under suitable hypotheses on the algebra and the action,
Bao, He, and Zhang provide a criterion for the bijectivity of the Auslander map for semisimple $H$
in terms of the GK dimension of a certain quotient of $S\# H$ \cite[Thm.\@ 0.3]{baoPertinencyHopfActions2019}.

Applying the preceding theorems,
we obtain the following sufficient condition without assuming that $H$ is semisimple.

% 我们这边相较BHZ的额外的条件是: 作用要faithful, S要是domain
\begin{theorem}[{{\ref{cor:nc-auslander-thm}}}]\label{main-thm-auslander}
  Let $S$ be a noetherian $k$-algebra, and let $H$ be a finite-dimensional Hopf algebra over $k$ acting on $S$
  such that $S$ is faithful as a left $H$-module.
  Suppose that $S$ is GK-Cohen--Macaulay of GK dimension $d\geq 2$
  and has no zero-divisors.
  Let $t$ be a nonzero left integral of $H$,
  and write $(1\#t)$ for the two-sided ideal of $S\#H$ generated by $1\# t$.
  Then the Auslander map $S\#H\to\End_{S^H}(S)$ is an isomorphism
  if $\GKdim(S\#H\div (1\#t))\leq d-2$.
\end{theorem}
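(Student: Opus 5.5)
We plan to deduce the statement from \ref{main-thm-morita-i} applied to the standard Morita context attached to the Hopf action, together with the closedness of both rings for the Gabriel topologies defined by GK dimension. Put $A=S\#H$ and $B=S^H$. Let $M={}_BS_A$ (right $A$-module via the action, left $B$-module by multiplication) and $N={}_AS_B$. The maps are
\[
  \xi\colon N\otimes_B M\to A,\quad x\otimes y\mapsto (x\#t)(y\#1),
\]
and
\[
  \zeta\colon M\otimes_A N\to B,\quad x\otimes y\mapsto t\rightharpoonup(xy).
\]
This is the usual context for a finite-dimensional Hopf action. The image of $\xi$ is the two-sided ideal $(1\#t)$, so $\coker(\xi)=A/(1\#t)$. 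Let $\cF$ be the Gabriel topology on $A$ of right ideals $I$ with $\GKdim(A/I)\leq d-2$. Then the hypothesis says exactly that condition (2a) of \ref{main-thm-morita-i} holds. Let $\cG$ be the analogous topology on $B$.

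The first step is to verify the remaining conditions. We need that $\coker(\zeta)$ is $\cG$-torsion, and that $\blank\otimes_B M$ and $\blank\otimes_A N$ preserve modules of GK dimension $\leq d-2$. Since $S$ is noetherian and finite over $B$, and $A$ is finite over $S$, the bimodules are finitely generated on both sides. So tensoring does not increase GK dimension, by the standard GK-dimension estimates for finite bimodules. For $\zeta$ we use the following fact: for a faithful action on a domain, $t\rightharpoonup S\neq 0$, hence its image is a nonzero ideal of the domain $B$. Since $\GKdim B=d$ and $B$ is a domain, the cokernel has GK dimension $\leq d-1$. This alone is not enough for $\leq d-2$. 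However, the condition (2b) is symmetric in the argument: by \ref{main-thm-morita-i} only the induced equivalence matters, and one can alternatively take $\cG$ to be the topology induced from $\cF$ through $M$. I would first try to show directly that $B(t\rightharpoonup S)B$ has cokernel of GK dimension $\leq d-2$, using $(1\#t)$: the ideal generated by $t\rightharpoonup S$ is recovered as $e(1\#t)e$-type corners. So $\GKdim B/\im\zeta\leq\GKdim A/(1\#t)$.

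Once the equivalence $\QModF A\simeq\QModG B$ is established, we compare endomorphism rings. The equivalence sends $\pi(M)$ to $\pi(B)$-type objects, and $\pi(A)$ corresponds to $\pi(N)$. By the argument of \ref{main-thm-morita-ii}(1), $\End_{\QModG B}(\pi(N))\cong\End_{\QModF A}(\pi(A))=\omega\pi(A)$. So the remaining task is to show that $A$ is $\cF$-closed and $S$ is $\cG$-closed as a $B$-module. Then $\End_B(S)=\End_{\QModG B}(\pi S)\cong A$, and this isomorphism is the Auslander map.

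The closedness uses GK-Cohen--Macaulayness with $d\geq2$. For a GK-CM module $X$ of dimension $d$, one has $\Ext^i(T,X)=0$ for $i=0,1$ whenever $\GKdim T\leq d-2$, which is exactly $\cF$-closedness. $A$ is a free $S$-module, hence GK-CM over $S$. Similarly $S$ is GK-CM over $B$, which requires transferring the CM property along the finite extension $B\subseteq S$. We expect this transfer, together with the (2b) estimate, to be the main obstacle. We would attack it via $\Ext_B(T,S)\cong\Ext_S(T\otimes^{\mathbb L}_BS,S)$-type adjunctions, combined with the Tor-preservation noted in \ref{enum:tor-torsion}.
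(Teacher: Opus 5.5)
There is a genuine gap, and it sits exactly where you place your ``obstacles''. Everything on the $S^H$ side of your argument rests on facts the hypotheses do not provide once $H$ is not semisimple. Nothing guarantees that $S^H$ is noetherian, that $S$ is finitely generated over $S^H$, or that GK dimension is exact on $S^H$-modules. So it is not even clear that ``GK codimension $\geq 2$'' defines a Gabriel topology $\cG$ on $S^H$, and (1a)/(1b) via ``finite bimodules do not raise GK dimension'' are unsupported. Condition (2b), i.e.\ $\GKdim\bigl(S^H/\hat t(S)\bigr)\leq d-2$, is left open. The corner argument cannot close it: for non-semisimple $H$ one has $\epsilon(t)=0$, hence $(1\#t)^2=1\#t^2=\epsilon(t)(1\#t)=0$, so there is no idempotent $e$ with $S^H\cong e(S\#H)e$. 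Finally, the $\cG$-closedness of $S$ over $S^H$, which your endgame requires, is never established.

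The missing idea has two parts: choose the topologies so that the Morita conditions become automatic, and ask only for torsion-freeness of $S$. Take the trace-ideal topologies $\cF(\hat t(S))$ on $S^H$ and $\cF\bigl((1\#t)\bigr)$ on $S\#H$.

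\begin{itemize}
\item By \ref{prop:trace-ideal-equiv}, the context then always induces an equivalence. The cokernels of $\xi,\zeta$ are the quotients by the trace ideals, so (2a)/(2b) are automatic. Torsion preservation follows from $\hat t(S)\,S=S\,(1\#t)$ and Hom--tensor adjunction, since $X$ is $\cF(T)$-torsion-free iff $\Ann_X(T)=0$. No finiteness or GK information about $S^H$ is used.
\item You do not need $S$ to be closed over $S^H$. By \ref{cor:auslander-context}, the composite of the Auslander map $\psi$ with $\pi\colon\End_{S^H}(S)\to\End(\pi(S))$ is identified with $\eta^{S\#H}$. This $\pi$ is injective as soon as $S$ is merely torsion-free, so $\psi$ is bijective once $S\#H$ is $\cF((1\#t))$-closed.
\item Torsion-freeness of $S$ over $S^H$ is immediate from $\hat t\neq 0$ (faithfulness) and $S$ being a domain.
\item Closedness of $S\#H$ is where the hypotheses enter. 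First, $S\#H$ is GK-Cohen--Macaulay of dimension $d$ because it is a Frobenius extension of $S$, which gives $j_{S\#H}=j_S$ on finitely generated modules; freeness over $S$ alone does not give this. Next, $\{(1\#t)^n\}$ is a basis of $\cF((1\#t))$. By \ref{rem:depth-basis}, closedness then reduces to $j\bigl(S\#H/(1\#t)^n\bigr)\geq 2$ for all $n$, i.e.\ to $\GKdim\bigl(S\#H/(1\#t)\bigr)\leq d-2$.
\end{itemize}
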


In \ref{ex:auslander-1,ex:auslander-2}, we illustrate this result with examples involving modular group actions and a Hopf action of a Taft algebra on quantum polynomial algebras.

This paper is organized as follows. In \ref{Quotient Categories and Gabriel Topologies}, we review quotient categories and Gabriel topologies and present several preliminary results.
In \ref{progenerators in Quotient Categories}, we study $\cF$-ampleness in abelian categories and prove \ref{main-thm-serre}.
In \ref{Quotient Equivalences and Morita Contexts}, we develop a Morita theory for quotient categories and prove \ref{main-thm-morita-i,main-thm-morita-ii}.
In \ref{sec:application}, we apply this theory to the noncommutative Auslander theorem for non-semisimple Hopf actions on AS-regular algebras and prove \ref{main-thm-auslander}.

% !TeX root = ../main.tex

\section{Quotient categories, functors descent to quotient categories} \label{Quotient Categories and Gabriel Topologies}

% 关于Faith的书 \cite{faithAlgebraRingsModules1973}
% Def. 15.3 商范畴的Serre定义
% Thm. 15.7 商范畴是abelian cat
% p. 505 localizing子范畴通过section函子存在来定义
% Thm. 15.11 对于Grothendieck cat中的一个Serre subcat, localizing subcat与关于无限直和封闭等价
% Prop. 15.14 证明section函子如果存在则一定是fully faithful
% Prop. 16.8B localizing子范畴和radical的一一对应
% 这本书里没有提商范畴也可以通过关于乘性系作Verdier商得到

This section first recalls several definitions and properties concerning quotient categories and Gabriel topologies, with particular reference to
\cite{faithAlgebraRingsModules1973, popescuAbelianCategoriesApplications1973, stenstromRingsQuotients1975}.
It then discusses the descent of functors to the quotient level. Finally, it introduces and characterizes the notion of $\cF$-depth with respect to a Gabriel topology $\cF$.

\subsection{Quotient categories and modules of quotients}
Let $\cC$ be an abelian category, and $\cS$ a Serre subcategory of $\cC$.
The \dfn{quotient category} $\cC\div\cS$ is defined as follows.
The objects of $\cC\div\cS$ are those of $\cC$.
The corresponding object of an object $X \in \cC$ in $\cC\div\cS$ is denoted by $\pi(X)$.
The Hom-sets of $\cC\div\cS$ are defined by
\[
  \Hom_{\cC\div\cS}(\pi(X),\pi(Y))=
  \dlim_{\substack{X'\leq X, X\div X'\in\cS\\Y'\leq Y,Y'\in\cS}}\Hom_{\cC}(X',Y\div Y').
\]
For any $f\in\Hom_{\cC}(X, Y)$, $\pi(f) \in \Hom_{\cC\div\cS}(\pi(X),\pi(Y))$ is the canonical image of $f$ in the direct limit.
These assignments define an additive functor $\pi \colon \cC \to \cC\div\cS$,
called the \dfn{quotient functor}.
The quotient category $\cC\div\cS$ is an abelian category, and the quotient functor $\pi$ is an exact functor
(\cite[Chap.\@ 4, Thm.\@ 3.8]{popescuAbelianCategoriesApplications1973}). The category $\cC\div\cS$ and the exact functor $\pi \colon \cC \to \cC\div\cS$ have the following universal property: for any abelian category $\cA$ and any exact functor $T\colon \cC \to \cA$ annihilating $\cS$, there exists a unique functor $T'\colon \cC\div\cS \to \cA$ such that $T=T' \pi$.

A Serre subcategory $\cS$ is called a \dfn{localizing subcategory} of $\cC$ if $\pi$ has a right adjoint $\omega\colon\cC\div\cS \to \cC$, which is called a \dfn{section functor} (\cite[Sect.\@ 4.4]{popescuAbelianCategoriesApplications1973}).
If $\cC$ is a cocomplete abelian category with injective envelopes, then the Serre subcategory $\cS$ is a localizing subcategory if and only if $\cS$ is closed under taking coproducts (\cite[Chap.\@ 4, Prop.\@ 6.3]{popescuAbelianCategoriesApplications1973}).
The adjunction gives two natural transformations
called the \dfn{unit}
$\eta\colon\id_{\cC}\rightarrow \omega\pi$
and the \dfn{counit}
$\epsilon\colon\pi\omega\rightarrow\id_{\cC\div\cS}$.
By \cite[Chap.\@ 4, Prop.\@ 4.3]{popescuAbelianCategoriesApplications1973},
the counit $\epsilon$ of the adjunction is a natural isomorphism,
and thus $\omega$ is fully faithful.

Now let $A$ be a ring. In this paper, an $A$-module always means a right $A$-module.
The category of all $A$-modules is denoted by $\Mod A$.
For a Gabriel topology $\cF$ on $A$,
the corresponding \dfn{torsion functor} $\tau_{\cF}\colon\Mod A\to\Mod A$
is the subfunctor of the identity functor defined by
\begin{equation*}
  \tau_{\cF}(\blank)\defeq\dlim_{I\in\cF}\Hom_A(A\div I, \blank),
\end{equation*}
where $\cF$ is viewed as a direct system by reverse inclusion.
An $A$-module $M$ is \dfn{$\cF$-torsion} if and only if $\tau_{\cF}(M)=M$,
while it is \dfn{$\cF$-torsion-free} if and only if $\tau_{\cF}(M)=0$.
For an $A$-module $M$, $\tau_{\cF}(M)$ denotes the largest $\cF$-torsion submodule of $M$.
The torsion functor $\tau_{\cF}$ is left exact,
and its $i$-th right derived functor satisfies
%has the form
\begin{equation*}
  \uR^i\tau_{\cF}(\blank)\cong\dlim_{I\in\cF}\Ext_A^i(A\div I, \blank).
\end{equation*}

There is a one-to-one correspondence between the
Gabriel topologies on $A$ and the localizing subcategories (or hereditary torsion classes) of $\Mod A$ (\cite[Chap.\@ VI, Thm.\@ 5.1]{stenstromRingsQuotients1975}).
Denote by $\QModF A$ the quotient category of $\Mod A$ modulo $\cF$-torsion modules, 
and by $\Hom_{A/\cF}(\blank,\blank)$ the Hom-sets in $\QModF A$.
The corresponding quotient functor is denoted by $\pi_{\cF}\colon \Mod A\to\QModF A$. The full subcategory consisting of all $\cF$-torsion modules is a localizing subcategory of $\Mod A$; consequently, there exists a section functor
$\omega_{\cF}\colon \QModF A\to\Mod A$.
The subscripts are usually omitted if there is no ambiguity.

For any $A$-module $M$, let $M_{\cF} = \dlim_{I\in\cF}\Hom_{A}(I, M/\tau(M))$. Then $A_{\cF}$ carries a canonical ring structure, called the \dfn{ring of quotients} of $A$ with respect to $\cF$, together with a canonical ring morphism $\eta^A\colon A \to A_{\cF}$. 
On the one hand, since $\tau(A)$ is the largest $\cF$-torsion submodule of $A$, by the definition of quotient categories, $\End_{A/\cF}(\pi(A))=\dlim_{I\in\cF}\Hom_{A}(I, A/\tau(A))=A_{\cF}$.
In fact, by definition, the ring structure of the endomorphism ring $\End_{A/\cF}(\pi(A))$ is identical to that of $A_{\cF}$. 
On the other hand, $M_{\cF}$ is an $A_{\cF}$-module, and the canonical morphism $\eta^M\colon M \to M_{\cF}$ is an $A$-module morphism when $M_{\cF}$ is regarded as an $A$-module via $\eta^A$. 
An $A$-module $M$ is \dfn{$\cF$-closed}
if and only if $M$ is $\cF$-torsion-free and $\cF$-injective (i.e.\@ $\Ext_A^1(A/I, M)=0$ for any $I \in \cF$).
When regarded as an $A$-module, $M_{\cF}$ is $\cF$-closed for any $M\in \Mod A$.
Moreover, the full subcategory
$\{M_{\cF} \mid M \in \Mod A\}$ of $\Mod A_{\cF}$ is equivalent to the full subcategory of $\Mod A$ consisting of all $\cF$-closed $A$-modules. The latter is denoted by $\Mod (A, \cF)$ (see \cite[p.\@ 196--198]{stenstromRingsQuotients1975}).

There is a diagram of functors
% \begin{equation}\label{eq:localization-diagram}
%   \begin{tikzcd}[
%     row sep=large,
%     left diagonal/.style={
%       start anchor={[sloped,allow upside down,xshift=-0.5em]north west},
%       end anchor={[sloped,allow upside down,xshift=-1em]south}
%     },
%     right diagonal/.style={
%       start anchor={[sloped,allow upside down,xshift=1em]south},
%       end anchor={[sloped,allow upside down,xshift=0.5em]north east}
%     }
%   ]
%     \Mod A & & \Mod A/\cF\\
%     & {\Mod(A,\cF)} &
%     \arrow[from=1-1,left adj]{1-3}[]{\pi}
%     \arrow[from=1-1,right adj]{1-3}[swap]{\omega=i\bar a}
%     \arrow[from=1-1,no line]{1-3}[rotate=90,sloped]{\vdash}
%     \arrow[from=2-2,left diagonal,left adj rev]{1-1}[swap,sloped]{a}
%     \arrow[from=2-2,left diagonal,right adj rev]{1-1}[sloped]{i}
%     \arrow[from=2-2,left diagonal,no line]{1-1}
%       [rotate=90,sloped,allow upside down]{\vdash}
%     \arrow[from=1-3,right diagonal,left adj rev]{2-2}[swap,sloped]{\pi i}
%     \arrow[from=1-3,right diagonal,right adj rev]{2-2}[sloped]{\bar a}
%     \arrow[from=1-3,right diagonal,no line]{2-2}[sloped]{\simeq}
%   \end{tikzcd}
% \end{equation}
  \begin{equation}\label{eq:localization-diagram}
\begin{tikzcd}
\Mod A \arrow[rr, "\pi", shift left] \arrow[rd, "a"', shift right=2] &                                                   & \Mod A/\cF \arrow[ll, "\omega = i \bar{a}", shift left] \arrow[ld, "\bar{a}"' ] \\
 & {\Mod (A, \cF)} \arrow[lu, "i"'] \arrow[ru, "\pi i"', shift right=2] &
\end{tikzcd}
  \end{equation}
where $a\colon M\mapsto M_{\cF}$ is the localization functor (\cite[p.\@ 217]{stenstromRingsQuotients1975}), $\bar{a}$ is the functor induced by the universal property of the quotient categories, and $i$ is the inclusion functor. 
In fact, $(\bar{a}, \pi i)$ is a pair of functors of category equivalence, $i$ is a right adjoint to $a$ and $\omega = i \bar{a}$ is a right adjoint to $\pi$, thus a section functor.
More precisely, %up to natural isomorphisms, 
the section functor $\omega$ is chosen so that 
\[
  \omega\pi(M)=\dlim_{I\in\cF}\Hom_{A}(I, M/\tau(M))=M_{\cF}, \textrm{ 
for any } M \in \Mod A,\]
and the unit $\eta\colon\id_{\Mod A}\rightarrow \omega\pi$ is chosen so that $\eta^M\colon M\to M_{\cF}=\omega\pi(M)$ is the canonical morphism for every $M\in \Mod A$.
An $A$-module $M$ is $\cF$-closed
if and only if $\eta^M\colon M\to\omega\pi(M)$ is an isomorphism;
if and only if $M$ is isomorphic to $\omega(\cX)$ for some $\cX\in\QModF A$.
Therefore, $\Mod (A,\cF)$ coincides with the essential image of $\omega$, and is equivalent to $\QModF A$.

% 这个引理是想来解释 M 是 torsion-free 当且仅当 eta^M 是单射, 在后面第四节新加的内容里用到了一下
\begin{lemma}\label{lem:kernel-eta}
  Let $A$ be a ring and $\cF$ a Gabriel topology on $A$.
  Suppose $M$ is an $A$-module. Then $\tau(M)=\ker(\eta^M)$.
\end{lemma}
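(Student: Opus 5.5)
The plan is to compute $\eta^M$ directly from the description $M_{\cF}=\dlim_{I\in\cF}\Hom_A(I,M/\tau(M))$. Recall that the canonical morphism factors as
\[
  M\xrightarrow{\;p\;} M/\tau(M)\xrightarrow{\;j\;}\dlim_{I\in\cF}\Hom_A(I,M/\tau(M)),
\]
where $p$ is the projection and $j$ sends $\bar m$ to the class of the map $A\to M/\tau(M)$, $a\mapsto \bar m a$ (taking $I=A\in\cF$). So $\eta^M=j\comp p$, and the claim $\ker(\eta^M)=\tau(M)$ follows once we know $\ker p=\tau(M)$, which holds by definition, and that $j$ is injective.

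\textbf{Injectivity of $j$.} Write $\overline{M}=M/\tau(M)$. This module is $\cF$-torsion-free, since $\tau$ is an idempotent radical: $\tau(M/\tau(M))=0$ for a hereditary torsion class. Suppose $j(\bar m)=0$ in the direct limit. Then there is some $I\in\cF$ such that the restriction of $a\mapsto \bar m a$ to $I$ is zero, so $\bar m I=0$. Hence $\bar m\in\tau(\overline M)$, because $\tau(\overline M)=\dlim_{I}\Hom_A(A/I,\overline M)$ consists exactly of the elements annihilated by some $I\in\cF$. Since $\tau(\overline M)=0$, we get $\bar m=0$.

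Combining the two facts, $\ker(\eta^M)=p^{-1}(\ker j)=p^{-1}(0)=\tau(M)$. One should also check that the direct-limit description of the vanishing is correct. Since $\cF$ is downward directed (closed under finite intersections), a class in $\dlim\Hom_A(I,\overline M)$ vanishes exactly when some restriction to a smaller $I'\in\cF$ vanishes. This is standard for filtered colimits of abelian groups.

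The only mild obstacle is justifying $\tau(M/\tau(M))=0$. This holds because the $\cF$-torsion class is closed under extensions: if $N/\tau(M)=\tau(M/\tau(M))$, then $N$ is an extension of torsion by torsion, hence torsion, hence $N\subseteq\tau(M)$. Alternatively, one can quote \cite{stenstromRingsQuotients1975}.
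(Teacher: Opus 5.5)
Your proof is correct, but it takes a more hands-on route than the paper.

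The paper proves the two inclusions using facts it treats as black boxes:
\begin{itemize}
\item $\tau(M)\subseteq\ker(\eta^M)$ because the codomain $\omega\pi(M)$ is $\cF$-closed, hence $\cF$-torsion-free;
\item $\ker(\eta^M)\subseteq\tau(M)$ because $\ker(\eta^M)$ is $\cF$-torsion, by \cite[Chap.\@ IX, Lem.\@ 1.2]{stenstromRingsQuotients1975}.
\end{itemize}

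You instead unpack the concrete model $M_{\cF}=\dlim_{I\in\cF}\Hom_A(I,M/\tau(M))$. The factorization $\eta^M=j\comp p$ gives $\tau(M)\subseteq\ker(\eta^M)$ directly, without needing to know that $M_{\cF}$ is closed. Injectivity of $j$ then follows from the vanishing criterion for filtered colimits together with $\tau(M/\tau(M))=0$. In effect, this reproves the cited lemma of Stenstr\"om.

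Your version is self-contained and elementary. The paper's version is shorter and does not depend on the model: it uses only that $\eta^M$ has $\cF$-torsion kernel and $\cF$-torsion-free codomain. So it applies verbatim to any realization of the unit $\id\to\omega\pi$, not just the specific colimit formula.
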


\begin{proof}
  Since the codomain of $\eta^M$ is $\cF$-closed and in particular $\cF$-torsion-free,
  it follows that $\tau(M) \subseteq \ker(\eta^M)$.
  By \cite[Chap.\@ IX, Lem.\@ 1.2]{stenstromRingsQuotients1975},
  $\ker(\eta^M)$ is $\cF$-torsion, and therefore $\tau(M)=\ker(\eta^M)$.
\end{proof}

The full subcategory consisting of all finitely generated 
$A$-modules is denoted by
$\mod A$. 
If $A$ is right noetherian, then $\mod A$ is an abelian subcategory of $\Mod A$, 
and the category of finitely generated $\cF$-torsion modules is a Serre subcategory of $\mod A$.
Let $\qmodF A$ denote the quotient category of $\mod A$ by the finitely generated $\cF$-torsion modules.
Note that, by definition, $\qmodF A$ is a full subcategory of $\QModF A$.

\begin{lemma}\label{lem-qmodG consists of noetherian objects}
  Let $A$ be a right noetherian ring and $\cF$ a Gabriel topology on $A$.
  Then $\qmodF A$ is equivalent to the full subcategory of $\QModF A$ consisting of all noetherian objects.
\end{lemma}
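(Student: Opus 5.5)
We prove two things. First, every object of $\qmodF A$ is noetherian when viewed in $\QModF A$. Second, every noetherian object of $\QModF A$ is isomorphic to $\pi(M)$ for some finitely generated $A$-module $M$. Since $\qmodF A$ is already a full subcategory of $\QModF A$ (as noted just before the statement), these two facts give the equivalence with the full subcategory of noetherian objects.

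For the first claim, let $M\in\mod A$ and take an ascending chain of subobjects $\cX_1\leq\cX_2\leq\cdots$ of $\pi(M)$ in $\QModF A$. We use the standard correspondence for localizing subcategories. A subobject $\cX\leq\pi(M)$ comes from the submodule $M_{\cX}\defeq(\eta^M)^{-1}(\omega(\cX))\leq M$. Here $\omega$ is left exact, so $\omega(\cX)\to\omega\pi(M)$ is a monomorphism. Exactness of $\pi$, together with $\pi(\eta^M)$ being an isomorphism and the counit $\pi\omega\to\id$ being an isomorphism, gives $\pi(M_{\cX})=\cX$ as subobjects of $\pi(M)$. The assignment $\cX\mapsto M_{\cX}$ preserves inclusions. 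So the chain $M_{\cX_1}\leq M_{\cX_2}\leq\cdots$ of submodules of the noetherian module $M$ stabilizes, and applying $\pi$ shows that the original chain stabilizes.

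For the second claim, let $\cX$ be noetherian in $\QModF A$ and set $N\defeq\omega(\cX)$, so that $\pi(N)\cong\cX$. Write $N=\bigcup_\lambda N_\lambda$ as the directed union of its finitely generated submodules. Since $\pi$ is exact and commutes with coproducts (it is a left adjoint), it preserves directed colimits. Hence the subobjects $\pi(N_\lambda)\leq\pi(N)$ form a directed family whose union is $\pi(N)$. Because $\pi(N)$ is noetherian, this family has a maximal member $\pi(N_{\lambda_0})$, and directedness forces $\pi(N_\lambda)\leq\pi(N_{\lambda_0})$ for all $\lambda$. Then $\pi(N_{\lambda_0})=\pi(N)$, so $N/N_{\lambda_0}$ is $\cF$-torsion and $\cX\cong\pi(N_{\lambda_0})$ with $N_{\lambda_0}\in\mod A$.

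The step most likely to need care is justifying that directed unions of subobjects in $\QModF A$ are computed by $\pi$, and that a noetherian object cannot be the strict directed union of proper subobjects. The second point is the standard fact that ACC in the subobject lattice gives a maximal element of any nonempty family. For the first point I would rely on $\QModF A$ being a Grothendieck category and on $\pi$ preserving colimits and monomorphisms. I would also check that the morphisms agree: the Hom-sets of $\qmodF A$ use finitely generated submodules $M'\leq M$ with $M/M'$ torsion and finitely generated torsion $Y'\leq Y$. Over a right noetherian ring these are cofinal in the systems defining Hom in $\QModF A$, because torsion submodules of a noetherian module are finitely generated and any torsion-cokernel submodule of a finitely generated module is itself finitely generated. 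This is the fullness already asserted in the paper.
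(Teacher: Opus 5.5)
Your proof is correct and has the same two-step structure as the paper's. First, objects of $\qmodF A$ are noetherian in $\QModF A$; second, a noetherian object $\pi(M)$ is isomorphic to $\pi(M')$ for some finitely generated $M'\leq M$ with $M/M'$ $\cF$-torsion. The only difference is that the paper gets both steps by citing Stenstr\"om's Prop.~XIII.2.1, whereas you prove them directly via the subobject correspondence $\cX\mapsto(\eta^M)^{-1}(\omega(\cX))$ and a directed-union argument, and both arguments are sound.
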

\begin{proof}
  By \cite[Prop.\@ XIII.2.1]{stenstromRingsQuotients1975}, every object of $\qmodF A$ is noetherian in $\QModF A$.
  
  On the other hand, suppose that $\cM = \pi(M) \in \QModF A$ is a noetherian object. By \cite[Prop.\@ XIII.2.1]{stenstromRingsQuotients1975} again, there exists some finitely generated submodule $M'$ of $M$ such that $M/M'$ is $\cF$-torsion. It follows that $\cM =\pi(M) \cong \pi(M') \in \qmodF A$.
\end{proof}

\subsection{Induced functors between quotient categories}

An \emph{exact} functor $F\colon\cC\to\cD$ between
abelian categories factors through the quotient category
$\cC\div\cS$ by a Serre subcategory $\cS$ if and only if
$F$ sends $\cS$ to $0$.
However, the situation becomes more involved when the functor $F$ is no longer exact. Here is an example showing that the exact property of $F$ is necessary
(see \cite[Cor.\@ 15.9]{faithAlgebraRingsModules1973}, \cite[Prop.\@ 2.1.4]{louInvariantTheoryRegular2016},
\cite[Lem.\@ 1.1]{smithCorrigendumMapsNonCommutative2016}). 

\begin{example} Let $A$ be the $2\times 2$ upper triangular matrix algebra over a field $k$, that is, the path algebra of the quiver $1 \xrightarrow{\alpha} 2$.
Let $\cC = \mod{A}$ and $\cS = \{M \in \mod{A} \mid M \otimes_A Ae_1=0\}$. Then $\cS$ is a Serre subcategory of $\cC$.
 Denote by $S_1$ and $S_2$ the simple modules corresponding to vertices $1$ and $2$, respectively.
In fact,
$\cS \cong \{S_2^{\oplus r}\mid r \geqslant 0\}.$ Consider the left exact
functor $T=\Hom_A(S_1, \blank)\colon \mod{A} \to \mod{k}$, which annihilates $\cS$ obviously, and the exact sequence
\[0 \to S_2 \xrightarrow{i} e_1A \xrightarrow{p} S_1 \to 0.\]
Clearly, $\pi(p)$ is an isomorphism in $\cC\div\cS$. Since $\Hom_A(S_1, e_1A)=0$, it follows that
$T(p)\colon \Hom_A(S_1, e_1A) \to \Hom_A(S_1, S_1)$ is the zero map.  Hence, there does not exist any functor $T'\colon \cC\div\cS \to \mod{k}$ such that $T \cong T' \pi$.

Note that $\mod{A}$ is an abelian category with enough injectives, and $\uR^1T(S_2)=\Ext^1_A(S_1, S_2)\cong k \neq 0$ (see (2) in the following proposition).
\end{example}

The following proposition characterizes
when the functor $F$ factors through quotient categories under certain natural conditions.
\begin{proposition}
  [{{\cite[Prop.\@ 2.11]{liNoncommutativeResolutionsASGorenstein2026}}}]
  \label{prop:induced-functors}
  Let $F\colon \cC\to\cD$ be a functor between two abelian categories $\cC$ and $\cD$.
  Let $\cS\subseteq\cC$ (resp.\@ $\cT\subseteq\cD$) be a Serre subcategory, 
  and $\pi\colon \cC\to \cC\div\cS$
  (resp.\@ $\pi\colon\cD\to\cD/\cT$) be the quotient functor.
  \begin{enumerate}
    \item Assume that $\cC$ has enough projective objects and $F$ is right exact.
      Then there exists a functor
      $\overline F\colon\cC\div\cS \to \cD/\cT$
      such that $ \overline F\pi\cong \pi F$ if and only if
      both $F$ and $\uL_1F$ send $\cS$ to $\cT$.
      Such a functor $\overline{F}$ is unique up to natural isomorphism.
    \item Dually, assume that $\cC$ has enough injective objects and
      $F$ is left exact.
      Then there exists a functor
      $\overline F\colon\cC\div\cS \to \cD/\cT$
      such that $\overline F\pi\cong \pi F$ if and only if
      both $F$ and $\uR^1F$ send $\cS$ to $\cT$. Such a functor $\overline{F}$ is unique up to a natural isomorphism.
  \end{enumerate}

  In either case, if furthermore $\cS$ is a localizing subcategory of $\cC$,
  then $\overline F\cong \pi F\omega$ where $\omega\colon\cC\div\cS \to \cC$ denotes the section functor.
\end{proposition}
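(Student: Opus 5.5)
We prove (1) and obtain (2) by dualizing. Write $P\defeq\pi_{\cT}F$, so we are asking when $P\colon\cC\to\cD/\cT$ factors through $\pi\colon\cC\to\cC\div\cS$ up to isomorphism. Since $\cC\div\cS$ has the same objects as $\cC$, any such factorization $\overline F$ must satisfy $\overline F(\pi X)\cong P(X)$. Factoring through $\pi$ is therefore the same as the following condition $(\ast)$: $P$ carries every morphism $f$ with $\pi(f)$ an isomorphism (that is, $\ker f,\coker f\in\cS$) to an isomorphism. Indeed, by the calculus of fractions, $\cC\div\cS$ is the localization of $\cC$ at exactly these morphisms (Gabriel). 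Uniqueness of $\overline F$ up to natural isomorphism then follows from the universal property of the localization. So the proof reduces to showing that $(\ast)$ holds if and only if $F$ and $\uL_1F$ send $\cS$ into $\cT$.

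\emph{Necessity.} Let $S\in\cS$. The map $0\to S$ becomes an isomorphism under $\pi$, so $\pi F(S)\cong\pi F(0)=0$, i.e.\ $F(S)\in\cT$. For $\uL_1F$, choose a projective cover-style presentation $0\to K\to P_0\to S\to 0$ with $P_0$ projective. Since $\uL_1F(P_0)=0$, we get an exact sequence
\[
0\to \uL_1F(S)\to F(K)\to F(P_0).
\]
The inclusion $K\to P_0$ has cokernel $S\in\cS$, so $\pi$ of it is an isomorphism. By $(\ast)$, $\pi F(K)\to\pi F(P_0)$ is then an isomorphism, so its kernel $\pi\,\uL_1F(S)$ vanishes, and hence $\uL_1F(S)\in\cT$.

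\emph{Sufficiency.} Let $f\colon X\to Y$ have $\ker f,\coker f\in\cS$. Factor $f$ as an epimorphism $X\to I$ followed by a monomorphism $I\to Y$, so it suffices to treat these two cases separately.
\begin{enumerate}
  \item For an epimorphism $X\to I$ with kernel $S\in\cS$, right exactness gives $F(S)\to F(X)\to F(I)\to 0$. The kernel of $F(X)\to F(I)$ is a quotient of $F(S)\in\cT$, so $\pi F(X)\to\pi F(I)$ is an isomorphism.
  \item For a monomorphism $I\to Y$ with cokernel $S'\in\cS$, the long exact sequence gives
  \[
  \uL_1F(S')\to F(I)\to F(Y)\to F(S')\to 0.
  \]
  Both the kernel and the cokernel of $F(I)\to F(Y)$ lie in $\cT$, so $\pi F(I)\to\pi F(Y)$ is an isomorphism.
\end{enumerate}
Here we use that $\cT$ is closed under subobjects and quotients. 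Statement (2) is the dual argument: use injectives, left exactness and $\uR^1F$.

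\emph{Localizing case.} Suppose $\cS$ is localizing with section functor $\omega$. The counit $\epsilon\colon\pi\omega\to\id$ is an isomorphism, so
\[
\pi F\omega\cong\overline F\pi\omega\cong\overline F.
\]

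The main obstacle is identifying $\cC\div\cS$ with the localization at the morphisms inverted by $\pi$, as defined via the direct-limit Hom-sets in the paper. We cite this standard fact from Gabriel / Popescu rather than reprove it. Everything else is routine diagram chasing.
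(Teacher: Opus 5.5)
The paper gives no proof of this proposition. It quotes it from \cite[Prop.\@ 2.11]{liNoncommutativeResolutionsASGorenstein2026}, and the upper-triangular example only shows that the $\uR^1F$ condition cannot be dropped, so there is no in-text argument to compare against.

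Your proof is correct and is the standard argument. You use Gabriel's identification of $\cC/\cS$ with the localization $\cC[\Sigma^{-1}]$, where $\Sigma=\{f \mid \ker f,\coker f\in\cS\}$, to reduce the question to whether $\pi F$ inverts $\Sigma$. You then check this separately on epimorphisms, which needs only $F(\cS)\subseteq\cT$, and on monomorphisms, which needs $\uL_1F(\cS)\subseteq\cT$. Necessity via $0\to S$ and a projective presentation of $S$ is also right, as are the dual case and the formula $\overline F\cong\pi F\omega$.

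Two minor points:
\begin{enumerate}
\item The uniqueness clause uses the $2$-categorical part of the universal property: precomposition with $\pi$ must be fully faithful on functor categories. This holds because $\pi$ is bijective on objects and every morphism of $\cC[\Sigma^{-1}]$ is a composite of morphisms $\pi(f)$ and $\pi(s)^{-1}$. It is standard, but it is more than the $1$-categorical factorization property.
\item In the necessity step you only need some epimorphism from a projective object onto $S$, which exists by assumption, not a projective cover.
\end{enumerate}
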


Note that \ref{prop:induced-functors} also applies to contravariant functors
by dual arguments.
\begin{definition}\label{def:induced-functors}
  Let $A$ and $B$ be rings equipped with Gabriel topologies $\cF$ and $\cG$, respectively. Let $F\colon\Mod A\to\Mod B$ be a (covariant or contravariant) functor.
  A functor $\overline F\colon\QModF A\to\QModG B$ is said to be induced by the functor $F$
  if there is a natural isomorphism $\pi F\cong \overline F\pi$.
\end{definition}

If, moreover, there is another functor $G\colon\Mod B\to\Mod A$ inducing $\overline{G}\colon\QModG B\to\QModF A$,
then $\overline{G}\comp\overline{F}$ is induced by $GF$; that is, $\overline{G}\comp\overline{F}=\overline{GF}$.

In the following diagrams, the notation $G\dashv F$ means that $G$ is a left adjoint to $F$.
By a commutative diagram of functors, we always mean that the diagram commutes up to a natural isomorphism. 
The following lemma is used in several places, namely \ref{lem:equiv-quot-cat,thm:morita-type}.

\begin{lemma}\label{lem:adjunction-diagram}
  Let $A$ and $B$ be rings equipped with Gabriel topologies $\cF$ and $\cG$, respectively.
  Suppose there is a functor $F\colon\Mod A\to \Mod B$ admitting a left adjoint $G$.
  Then the following are equivalent.
  \begin{enumerate}
    \item $G$ induces a functor $\overline G\colon \QModG B\to \QModF A$, which has a right adjoint $\overline F$.
    \item $F$ sends $\cF$-closed $A$-modules to $\cG$-closed $B$-modules.
    \item There is a functor $\overline F\colon \QModF A \to \QModG B$ satisfying $\omega_{\cG}\overline F \cong F \omega_{\cF}$.
  \end{enumerate}
  When these conditions hold, 
  $\overline F
    \cong
    \pi_{\cG} F\omega_{\cF},
    \,
    \overline G
    \cong
    \pi_{\cF} G \omega_{\cG}.$
\end{lemma}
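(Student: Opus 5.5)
We prove the cycle (1)$\Rightarrow$(2)$\Rightarrow$(3)$\Rightarrow$(1), using throughout that $\pi\dashv\omega$, that the counit $\pi\omega\to\id$ is an isomorphism, and that the essential image of $\omega_{\cF}$ is exactly the class of $\cF$-closed modules $\Mod(A,\cF)$.

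\textbf{(1)$\Rightarrow$(2).} Let $X$ be an $\cF$-closed $A$-module, so $X\cong\omega_{\cF}(\cX)$ with $\cX=\pi_{\cF}(X)$. For every $B$-module $Y$ we compute
\[
\Hom_B(Y,F\omega_{\cF}\cX)\cong\Hom_A(GY,\omega_{\cF}\cX)\cong\Hom_{A/\cF}(\pi_{\cF}GY,\cX)\cong\Hom_{A/\cF}(\overline G\pi_{\cG}Y,\cX)\cong\Hom_{B/\cG}(\pi_{\cG}Y,\overline F\cX)\cong\Hom_B(Y,\omega_{\cG}\overline F\cX).
\]
These isomorphisms are natural in $Y$, so Yoneda gives $F\omega_{\cF}\cX\cong\omega_{\cG}\overline F\cX$. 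The right-hand side lies in the essential image of $\omega_{\cG}$, so $F(X)$ is $\cG$-closed. The same computation, natural also in $\cX$, yields $\omega_{\cG}\overline F\cong F\omega_{\cF}$, which is (3) directly.

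\textbf{(2)$\Rightarrow$(3).} Define $\overline F\defeq\pi_{\cG}F\omega_{\cF}$. By (2), $F\omega_{\cF}\cX$ is $\cG$-closed, so the unit $\eta\colon F\omega_{\cF}\cX\to\omega_{\cG}\pi_{\cG}F\omega_{\cF}\cX$ is an isomorphism. This gives $\omega_{\cG}\overline F\cong F\omega_{\cF}$ naturally.

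\textbf{(3)$\Rightarrow$(1).} This is the main step. Set $\overline G\defeq\pi_{\cF}G\omega_{\cG}$. First we check the adjunction $\overline G\dashv\overline F$:
\[
\Hom_{A/\cF}(\pi_{\cF}G\omega_{\cG}\cY,\cX)\cong\Hom_A(G\omega_{\cG}\cY,\omega_{\cF}\cX)\cong\Hom_B(\omega_{\cG}\cY,F\omega_{\cF}\cX)\cong\Hom_B(\omega_{\cG}\cY,\omega_{\cG}\overline F\cX)\cong\Hom_{B/\cG}(\cY,\overline F\cX),
\]
where the last step uses that $\omega_{\cG}$ is fully faithful.

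Next we show $\overline G$ is induced by $G$, i.e.\ $\pi_{\cF}G\cong\overline G\pi_{\cG}$. For a $B$-module $Y$ and $\cX\in\QModF A$,
\[
\Hom_{A/\cF}(\pi_{\cF}GY,\cX)\cong\Hom_B(Y,F\omega_{\cF}\cX)\cong\Hom_B(Y,\omega_{\cG}\overline F\cX)\cong\Hom_{B/\cG}(\pi_{\cG}Y,\overline F\cX)\cong\Hom_{A/\cF}(\overline G\pi_{\cG}Y,\cX).
\]
Since this is natural in $\cX$, Yoneda in $\QModF A$ gives $\pi_{\cF}G\cong\overline G\pi_{\cG}$. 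The point is that both functors are left adjoint to the same functor, namely to $F\omega_{\cF}\cong\omega_{\cG}\overline F$ viewed through $\pi_{\cG}$.

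The main obstacle is bookkeeping: checking that every isomorphism above is natural in both variables, so that Yoneda applies and produces a natural isomorphism of functors. The formulas $\overline F\cong\pi_{\cG}F\omega_{\cF}$ and $\overline G\cong\pi_{\cF}G\omega_{\cG}$ then follow. The first is obtained by applying $\pi_{\cG}$ to $\omega_{\cG}\overline F\cong F\omega_{\cF}$ and using $\pi_{\cG}\omega_{\cG}\cong\id$. The second comes from $\overline G\cong\overline G\pi_{\cG}\omega_{\cG}\cong\pi_{\cF}G\omega_{\cG}$. In case (1), the given $\overline F$, being a right adjoint to $\overline G$, agrees with this one by uniqueness of adjoints.
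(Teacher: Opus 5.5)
Your proof is correct and follows the paper's route: the same cycle (1)$\Rightarrow$(2)$\Rightarrow$(3)$\Rightarrow$(1), with $\overline F\defeq\pi_{\cG}F\omega_{\cF}$ and $\overline G\defeq\pi_{\cF}G\omega_{\cG}$ as the constructed functors. The only difference is presentational. Where the paper appeals to uniqueness of right adjoints (for (1)$\Rightarrow$(2)) and of left adjoints (for the last step of (3)$\Rightarrow$(1)) in the square of adjunctions, you write these out as explicit Hom-set chains and apply Yoneda.
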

\begin{proof}
  \textit{(1)$\Rightarrow$(2).}
  Consider the following diagram of adjoint functors.
  \begin{equation}
    % \crefalias{equation}{diagram}
    \label{diagr:context-adjunction}
    % https://tex.stackexchange.com/a/52379
    \begin{tikzcd}[column sep=huge]
      \Mod A & \Mod B\\
      \QModF A & \QModG B
      \arrow[from=1-1,left adj rev,]{1-2}[]{G}
      \arrow[from=1-1,right adj rev,]{1-2}[swap,]{F}
      \arrow[from=1-1,no line]{1-2}[rotate=90,sloped,]{\vdash}
      \arrow[from=2-1,left adj rev,]{1-1}[]{\pi}
      \arrow[from=2-1,right adj rev,]{1-1}[swap,]{\omega}
      \arrow[from=2-1,no line,]{1-1}[rotate=90,sloped,]{\vdash}
      \arrow[from=2-1,left adj,]{2-2}[]{\overline F}
      \arrow[from=2-1,right adj,]{2-2}[swap,]{\overline G}
      \arrow[from=2-1,no line]{2-2}[rotate=-90,sloped,]{\vdash}
      \arrow[from=1-2,left adj,]{2-2}[]{\pi}
      \arrow[from=1-2,right adj,]{2-2}[swap,]{\omega}
      \arrow[from=1-2,no line,]{2-2}[rotate=90,sloped,]{\vdash}
    \end{tikzcd}
  \end{equation}
  By assumption, $\overline G$ is induced by $G$, that is,
  the outer square of \ref{diagr:context-adjunction} commutes.
  It follows from the uniqueness of right adjoints that the inner square of \ref{diagr:context-adjunction} commutes, that is, $\omega_{\cG} \overline F \cong F \omega_{\cF}$. Indeed, $\overline F
    \cong \pi_{\cG} \omega_{\cG} \overline F  \cong 
    \pi_{\cG} F\omega_{\cF}$.
  Recall that the $B$-module $\omega_{\cG} (\cY)$ is $\cG$-closed for any $\cY \in \QModG B$.
  Therefore, for any $\cF$-closed $A$-module $X$, $F(X)\cong F\omega_{\cF}\pi_{\cF}(X)\cong\omega_{\cG}\overline F\pi_{\cF}(X)$,
  which means that $F$ preserves closed modules.

Since   $\overline G \pi_{\cG} \cong \overline G \pi_{\cG} \omega_{\cG} \pi_{\cG} \cong \pi_{\cF} G \omega_{\cG}  \pi_{\cG}$, it follows that $\overline G \cong \pi_{\cF} G \omega_{\cG}$.

  \textit{(2)$\Rightarrow$(3).}
  Let $\overline F \defeq \pi_{\cG} F \omega_{\cF}$.
  For any $\cX\in\QModF A$, there is a natural isomorphism
  \begin{align*}
    \omega_{\cG}\overline F(\cX)&=\omega_{\cG}\big(\pi_{\cG} F \omega_{\cF}(\cX)\big)\\
    &=\omega_{\cG}\pi_{\cG} \big(F \omega_{\cF}(\cX)\big) \cong F \omega_{\cF}(\cX),
  \end{align*}
  as $F\omega_{\cF}(\cX)$ is $\cG$-closed by assumption.
  It follows that $\omega_{\cG}\overline F \cong F \omega_{\cF}$.

  \textit{(3)$\Rightarrow$(1).}
  Let $\overline G\defeq \pi_{\cF}G\omega_{\cG}$.
  For any $\cX\in\QModF A$ and $\cY\in\QModG B$, there are natural isomorphisms
  \begingroup\allowdisplaybreaks
  \begin{align*}
    \Hom_{\QModF A}\bigl(\overline{G}(\cY), \cX\bigr)
    &= \Hom_{\QModF A}\bigl(\pi_{\cF}G\omega_{\cG}(\cY), \cX\bigr)\\
    &\cong \Hom_A\bigl(G\omega_{\cG}(\cY), \omega_{\cF}(\cX)\bigr)\\
    &\cong \Hom_B\bigl(\omega_{\cG}(\cY), F\omega_{\cF}(\cX)\bigr)\\
    &\cong \Hom_B\bigl(\omega_{\cG}(\cY), \omega_{\cG}\overline F(\cX)\bigr)\\
    &\cong \Hom_{\QModG B}\bigl(\cY, \overline F(\cX)\bigr).
  \end{align*}
  \endgroup
  It follows that $\overline G$ is a left adjoint to $\overline F$.

  The inner square of \ref{diagr:context-adjunction} commutes by assumption.
  Hence, by the uniqueness of left adjoints, the outer square also commutes, that is, $ \overline{G} \pi_{\cG}\cong \pi_{\cF} G$, which means that the functor  $\overline G$ is induced by $G$.
\end{proof}

\subsection{Gabriel topologies under change of rings}

Let $B\to A$ be a ring homomorphism, and let $\cG$ be a Gabriel topology on $B$.
Then $\cG$ naturally induces a Gabriel topology $\cF$ on $A$
via the restriction of scalars functor,
namely, an $A$-module is $\cF$-torsion
if and only if, when viewed
as a $B$-module, it is $\cG$-torsion. The following lemma generalizes \cite[Prop.\@ X.2.2]{stenstromRingsQuotients1975}.
\begin{lemma}\label{lem:top-change-of-rings}
  Let $\phi\colon B\to A$ be a ring homomorphism, let $\cG$ be a Gabriel topology on $B$ and let $\cF$ denote the Gabriel topology on $A$ induced by $\cG$.
  Then
  \begin{equation}\label{eq:top-change-of-rings}
    \cF\subseteq \{I \submodule A_A \mid \phi^{-1}(I)\in\cG\}.
  \end{equation}
  The equality holds if $\coker(\phi)$ is $\cG$-torsion.
\end{lemma}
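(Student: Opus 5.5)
The inclusion \eqref{eq:top-change-of-rings} comes straight from the definition of $\cF$ together with the standard description $\cF=\{I\le A_A \mid A/I \text{ is } \cF\text{-torsion}\}$. Take $I\in\cF$. Then $A/I$ is $\cF$-torsion, so by the definition of the induced topology, $A/I$ is $\cG$-torsion as a $B$-module. Now look at the $B$-module map $B\to A/I$, $b\mapsto \phi(b)+I$. Its kernel is $\phi^{-1}(I)$, so $B/\phi^{-1}(I)$ embeds into $A/I$. The class of $\cG$-torsion modules is closed under submodules, hence $B/\phi^{-1}(I)$ is $\cG$-torsion, that is, $\phi^{-1}(I)\in\cG$.

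For the reverse inclusion, assume $\coker(\phi)=A/\phi(B)$ is $\cG$-torsion, and let $I\le A_A$ satisfy $\phi^{-1}(I)\in\cG$. We must show that $A/I$ is $\cG$-torsion as a $B$-module. The key step is to use the chain $I\subseteq I+\phi(B)\subseteq A$, both pieces taken as $B$-submodules of $A$. This gives a short exact sequence of $B$-modules
\[
  0\to (I+\phi(B))/I\to A/I\to A/(I+\phi(B))\to 0 .
\]
The right-hand term is a quotient of $A/\phi(B)=\coker(\phi)$, so it is $\cG$-torsion. For the left-hand term, we have $(I+\phi(B))/I\cong \phi(B)/(\phi(B)\cap I)$. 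This is the image of $B$ under $b\mapsto\phi(b)+I$, whose kernel is $\phi^{-1}(I)$, so the term is isomorphic to $B/\phi^{-1}(I)$. That module is $\cG$-torsion because $\phi^{-1}(I)\in\cG$. Since $\cG$-torsion modules form a Serre class (closed under extensions), $A/I$ is $\cG$-torsion. Thus $A/I$ is $\cF$-torsion and $I\in\cF$.

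There is one point to check carefully. It is the identification of $\cF$ with the set of right ideals $I$ for which $A/I$ is $\cF$-torsion, where "$\cF$-torsion" means $\cG$-torsion after restriction. This is the standard correspondence between Gabriel topologies and hereditary torsion classes (\cite[Chap.\@ VI, Thm.\@ 5.1]{stenstromRingsQuotients1975}). We also need that restriction of scalars preserves submodules, quotients and extensions, so that the class of $A$-modules that are $\cG$-torsion over $B$ really is a hereditary torsion class closed under direct sums, which is what makes $\cF$ well defined. With this in place, both directions reduce to the torsion-class manipulations above, and the extension step in the second paragraph is the only place where the hypothesis on $\coker(\phi)$ enters.
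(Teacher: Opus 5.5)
Your proof is correct and follows the paper's argument: the inclusion comes from the embedding $B/\phi^{-1}(I)\to A/I$, and the equality comes from the short exact sequence $0\to B/\phi^{-1}(I)\to A/I\to A/(I+\im(\phi))\to 0$, whose right-hand term is a quotient of $\coker(\phi)$.
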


\begin{proof}
  The inclusion \ref{eq:top-change-of-rings} follows from the exact sequence $0 \to  B\div\phi^{-1}(I) \to  A\div I$.
  
  If $\coker(\phi)$ is $\cG$-torsion,  then the equality $\cF= \{I \submodule A_A \mid \phi^{-1}(I)\in\cG\}$ follows from the exact sequence $0 \to  B\div\phi^{-1}(I) \to  A\div I \to A\div (I+\im(\phi)) \to 0$.
\end{proof}

\begin{lemma}\label{lem:tau_A=tau_B}
  Keep the notation as in \ref{lem:top-change-of-rings}. If $\cF= \{I\submodule A_A\mid \phi^{-1}(I)\in\cG\}$, then, for any $M\in \Mod A$, $\tau_{\cF}(M)=\tau_{\cG}(M)$.
\end{lemma}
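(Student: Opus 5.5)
We compare the two torsion submodules directly as subsets of $M$, elementwise. An element $m\in M$ lies in $\tau_{\cF}(M)$ exactly when its annihilator $\Ann_A(m)$ lies in $\cF$. Under the hypothesis $\cF=\{I\submodule A_A\mid\phi^{-1}(I)\in\cG\}$, this happens if and only if $\phi^{-1}(\Ann_A(m))\in\cG$. Now $\phi^{-1}(\Ann_A(m))$ is precisely $\Ann_B(m)$, where $M$ is viewed as a $B$-module via $\phi$: indeed $mb=m\phi(b)=0$ if and only if $\phi(b)\in\Ann_A(m)$. Since $m\in\tau_{\cG}(M)$ exactly when $\Ann_B(m)\in\cG$, the two torsion submodules consist of the same elements, so $\tau_{\cF}(M)=\tau_{\cG}(M)$.

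The only input needed is the standard elementwise description of the torsion functor. From $\tau_{\cF}(\blank)=\dlim_{I\in\cF}\Hom_A(A\div I,\blank)$, an element $m$ is $\cF$-torsion if and only if $mI=0$ for some $I\in\cF$. Since Gabriel topologies are upward closed, this is equivalent to $\Ann_A(m)\in\cF$. The same description holds for $\cG$ on $B$.

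I expect no real obstacle. The one point to state carefully is that $\tau_{\cG}(M)$ is computed on the restriction of $M$ to $B$ and is a priori only a $B$-submodule. The elementwise equality shows that it coincides with the $A$-submodule $\tau_{\cF}(M)$, so the identity $\tau_{\cF}(M)=\tau_{\cG}(M)$ makes sense as an equality of subsets of $M$, and hence as $B$-submodules.
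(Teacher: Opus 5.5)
Your proof is correct and follows essentially the same elementwise route as the paper. The only difference is bookkeeping: you test membership through the annihilator $\Ann_A(m)$, so the hypothesis yields both inclusions in a single chain of equivalences. The paper instead obtains $\tau_{\cF}(M)\subseteq\tau_{\cG}(M)$ from the defining property of the induced topology, and for the converse applies the hypothesis to the right ideal $\phi(J)A$, where $J\in\cG$ satisfies $x\phi(J)=0$.
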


\begin{proof}
  By the definition of $\cF$, $\tau_{\cF}(M) \subseteq \tau_{\cG}(M)$. On the other hand,
  suppose $x\in \tau_{\cG}(M)$, that is, $x\cdot J=x \phi(J)=0$ for some $J\in\cG$. Let $I=\phi(J)A$.
  It follows from $J\submodule \phi^{-1}(I)$ that $\phi^{-1}(I)\in\cG$.
  By assumption, $I\in\cF$. Thus $x I = x \phi(J)A=0$. Therefore, $x\in \tau_{\cF}(M)$. Hence $\tau_{\cG}(M) \subseteq \tau_{\cF}(M)$.
\end{proof}

Let $\phi\colon B\to A$ be a ring homomorphism, let $\cG$ be a Gabriel topology on $B$, and let $\cF$ be the Gabriel topology on $A$ induced by $\cG$.
The restriction of scalars functor
\[\phi^* = \Hom_A({}_BA_A, \blank) \colon \Mod A\to\Mod B\] is an exact functor
sending $\cF$-torsion modules to $\cG$-torsion modules,
and therefore it induces a functor $\overline{\phi^*}\colon\QModF A\to \QModG B$.
Regarding the extension-of-scalars functor
$\phi_!\defeq\blank\otimes_B A\colon\Mod B\to\Mod A$, which is left adjoint to $\phi^*$,
we have the following lemma
generalizing \cite[Prop.\@ 2.5]{artinNoncommutativeProjectiveSchemes1994}.

\begin{lemma}
  \label{lem:equiv-quot-cat}
  Keep the notation as above.
  Suppose that the kernel and the cokernel of $\phi \colon B \to A$ are $\cG$-torsion.
  Then the following hold.
  \begin{enumerate}[ref=\ref{lem:equiv-quot-cat}(\arabic*)]
    \item%\label{enum:equiv-quot-cat-equiv}
    $\phi^*$ and $\phi_!$ induce an equivalence of categories
    \[
      \overline{\phi^*}\colon \QModF{A}\simeq\QModG{B}\colon \overline{\phi_!}.
    \]
    \item%\label{enum:equiv-quot-cat-comm}
      The inner and outer squares of the following diagram are commutative.
      \begin{equation*}
        % \label[diagram]{diagr:context-adjunction}
        % https://tex.stackexchange.com/a/52379
        \begin{tikzcd}[column sep=large]
          \Mod A & \Mod B\\
          \QModF A & \QModG B
          \arrow[from=1-1,left adj rev ,]{1-2}[]{\phi_!}
          \arrow[from=1-1,right adj rev,]{1-2}[swap,]{\phi^*}
          \arrow[from=1-1,no line]{1-2}[rotate=90,sloped,]{\vdash}
          \arrow[from=2-1,left adj rev,]{1-1}[]{\pi}
          \arrow[from=2-1,right adj rev,]{1-1}[swap,]{\omega}
          \arrow[from=2-1,no line,]{1-1}[rotate=90,sloped,]{\vdash}
          \arrow[from=2-1,left adj,]{2-2}[]{\overline{\phi^*}}
          \arrow[from=2-1,right adj,]{2-2}[swap,]{\overline{\phi_!}}
          % \arrow[from=2-1,no line]{2-2}[sloped,]{\simeq}
          \arrow[from=2-1,no line]{2-2}[sloped,]{\simeq}
          \arrow[from=1-2,left adj,]{2-2}[]{\pi}
          \arrow[from=1-2,right adj,]{2-2}[swap,]{\omega}
          \arrow[from=1-2,no line,]{2-2}[rotate=90,sloped,]{\vdash}
        \end{tikzcd}
      \end{equation*}
    \item%\label{enum:equiv-quot-cat-closed}
      $\phi^*$ sends $\cF$-closed $A$-modules to $\cG$-closed $B$-modules.
  \end{enumerate}
\end{lemma}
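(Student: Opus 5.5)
The plan is to establish (3) first, then use \ref{lem:adjunction-diagram} with $F=\phi^*$ and $G=\phi_!$ to get the commutative squares and an adjunction on the quotient level, and finally show that the unit and counit of that adjunction are isomorphisms.

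\emph{Step 1: proof of (3).} Let $X$ be an $\cF$-closed $A$-module. We first check that $\phi^*X$ is $\cG$-torsion-free. Since $\ker\phi$ and $\coker\phi$ are $\cG$-torsion, \ref{lem:top-change-of-rings} gives $\cF=\{I\mid\phi^{-1}(I)\in\cG\}$. Then \ref{lem:tau_A=tau_B} gives $\tau_{\cG}(X)=\tau_{\cF}(X)=0$.

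Next we check that $\Ext^1_B(B/J,X)=0$ for every $J\in\cG$. Equivalently, every $B$-linear map $f\colon J\to X$ extends to $B$. The steps are as follows.
\begin{enumerate}
  \item Consider $I\defeq\phi(J)A\in\cF$ and the $B$-module map $J\otimes_B A\to I$. Its kernel and cokernel should be $\cG$-torsion, since $\phi_!$ applied to $\ker\phi$ and $\coker\phi$ yields torsion.
  \item Extend $f$ to $f\otimes A\colon J\otimes_B A\to X$, using that $X$ is an $A$-module. This map kills the torsion kernel, because $X$ is torsion-free.
  \item On $I$ it then defines a map modulo a torsion cokernel. Use $\cF$-injectivity of $X$ (closedness for the torsion-free quotient) to extend it to $A$, hence to $B$ via $\phi$.
  \item Check compatibility with $f$ on $J$. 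Any discrepancy is a map from $J$ into $X$ vanishing on a $\cG$-dense subideal, hence zero by torsion-freeness.
\end{enumerate}
A cleaner alternative: $X\cong\dlim_{I\in\cF}\Hom_A(I,X)$, and we compare this with $\dlim_{J\in\cG}\Hom_B(J,X)$ using the cofinality $\{\phi(J)A\}$ in $\cF$. This is the step I expect to be the main technical obstacle: controlling the non-flatness of $\phi_!$, i.e.\ that $J\otimes_BA\to \phi(J)A$ has torsion kernel and cokernel. This uses that $\uL_1\phi_!$ and $\phi_!$ of torsion modules are torsion, computed via $\Tor^B(\blank,A)$ and the sequence $0\to\ker\phi\to B\to A\to\coker\phi\to 0$.

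\emph{Step 2: proof of (2).} Given (3), \ref{lem:adjunction-diagram} (2)$\Rightarrow$(1),(3) applied to $F=\phi^*$, $G=\phi_!$ yields two things. First, $\phi_!$ induces $\overline{\phi_!}$ with right adjoint $\overline{\phi^*}\cong\pi\phi^*\omega$. Second, $\omega_{\cG}\overline{\phi^*}\cong\phi^*\omega_{\cF}$, so both squares commute. Since $\phi^*$ is exact and preserves torsion, this $\overline{\phi^*}$ agrees with the functor induced directly by $\phi^*$.

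\emph{Step 3: proof of (1).} It remains to show that the adjunction $\overline{\phi_!}\dashv\overline{\phi^*}$ is an equivalence. For the counit, take $X\in\Mod A$. The map $\phi^*X\otimes_BA\to X$ is surjective, and its kernel is $\cF$-torsion: it is killed by elements generated from $\coker\phi$ and $\ker\phi$, and by \ref{lem:tau_A=tau_B} it suffices to check $\cG$-torsion. Hence $\pi$ of the counit is an isomorphism. For the unit, take $Y\in\Mod B$. The map $Y\to Y\otimes_BA$ has kernel and cokernel that are quotients or subquotients of $Y\otimes\ker\phi$, $\Tor_1^B(Y,\coker\phi)$ and $Y\otimes\coker\phi$. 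All of these are $\cG$-torsion, because torsion modules form a localizing class closed under tensoring on the left with arbitrary modules over the torsion bimodules (each element of $y\otimes c$ is annihilated by a $\cG$-ideal). So $\pi$ of the unit is an isomorphism, and $\overline{\phi^*},\overline{\phi_!}$ are quasi-inverse.
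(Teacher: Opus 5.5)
Your proposal is correct, but it runs the argument in the opposite direction from the paper.

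\textbf{How the paper argues.} The paper proves (1) first. It shows that $\ker(\id_N\otimes\phi)$ and $\coker(\id_N\otimes\phi)$ are $\cG$-torsion for every $N\in\Mod B$. A projective presentation then shows that $\phi_!$ and $\Tor_1^B(\blank,A)$ carry $\cG$-torsion modules to torsion modules, so $\phi_!$ descends by \ref{prop:induced-functors}. The paper then writes down the two natural isomorphisms directly:
\begin{itemize}
\item[] $\pi(\id_N\otimes\phi)$;
\item[] $\pi$ of the multiplication $\mu\colon X\otimes_BA\to X$, whose kernel is $\cG$-torsion because $\mu$ is split as a $B$-map by $\id_X\otimes\phi$.
\end{itemize}
(2) then follows from uniqueness of adjoints, and (3) is a formal consequence of \ref{lem:adjunction-diagram}, (1)$\Rightarrow$(2).

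\textbf{How your route differs.} You prove (3) by hand, extending $f\colon J\to X$ through $J\otimes_BA\to\phi(J)A\in\cF$. You then let \ref{lem:adjunction-diagram}, (2)$\Rightarrow$(3)$\Rightarrow$(1), supply both the descent of $\phi_!$ and the two commuting squares. So you never need \ref{prop:induced-functors} for $\phi_!$.

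The essential input is the same in both routes: $\Tor_1^B(\blank,A)$ produces $\cG$-torsion modules. Your sketch of this works. In fact, the long exact sequences for $0\to\ker\phi\to B\to\im\phi\to0$ and $0\to\im\phi\to A\to\coker\phi\to0$ show that $\Tor_1^B(N,A)$ is $\cG$-torsion for every $N$. It is an extension of a submodule of $\Tor_1^B(N,\coker\phi)$ by a quotient of $\Tor_1^B(N,\im\phi)\subseteq N\otimes_B\ker\phi$.

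The paper's order makes (3) free. Yours makes the descent of $\phi_!$ free, at the cost of an explicit $\cG$-injectivity check.

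\textbf{Points to tighten.}
\begin{itemize}
\item[] The map $J\otimes_BA\to\phi(J)A$ is surjective, so only its kernel $\Tor_1^B(B/J,A)$ matters.
\item[] In your substep 4 there is no discrepancy at all. If $x\in X$ extends $\tilde f$, then $x\phi(j)=\tilde f(\phi(j))=f(j)$.
\item[] The justification of the counit step (``killed by elements generated from $\coker\phi$ and $\ker\phi$'') is too vague as written. Either use the splitting above, which gives $\ker\mu\cong\coker(\id_X\otimes\phi)\cong X\otimes_B\coker\phi$, or argue elementwise. For $w=\sum_ix_i\otimes a_i\in\ker\mu$, choose $J\in\cG$ with $a_i\phi(J)\subseteq\im\phi$ for all $i$. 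Then $w\phi(j)=\bigl(\sum_ix_ia_i\phi(j)\bigr)\otimes1=0$ for all $j\in J$.
\item[] You do not need to identify $\pi$ of the module-level unit and counit with the unit and counit of the quotient adjunction. The natural isomorphisms $\id\cong\overline{\phi^*}\,\overline{\phi_!}$ and $\overline{\phi_!}\,\overline{\phi^*}\cong\id$ they induce already show the functors are quasi-inverse, exactly as in the paper.
\end{itemize}
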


\begin{proof}
  (1) We claim that $\phi_!=\blank\otimes_B A\colon\Mod B\to\Mod A$ induces a functor $\overline{\phi_!}$ between the quotient categories. We first show that, for any $N\in\Mod B$, the kernel and cokernel of
  the $B$-module morphism $\id_N \otimes \phi \colon N \otimes_B B \to N\otimes_B A$ are $\cG$-torsion.

  Since both $\ker(\phi)$ and $\coker(\phi)$ are $B$-$B$-bimodules which are $\cG$-torsion as right $B$-modules, $N\otimes_B \ker (\phi)$, $N\otimes_B \coker(\phi)$ and $\Tor^B_1(N, \coker(\phi))$ are $\cG$-torsion modules.
  Therefore, $\coker(\id_N \otimes \phi)\cong N\otimes_B \coker(\phi)$ is $\cG$-torsion.
  It follows from the $B$-$B$-bimodule exact sequences
  \[0\to \ker(\phi)\to B\xrightarrow{\phi|} \im(\phi)\to 0\text{\quad and \quad}0\to \im(\phi)\xrightarrow{i} A\to \coker(\phi)\to 0\]
  that $\ker(\id_N \otimes \phi|)$ and $\ker(\id_N \otimes i)$ are $\cG$-torsion.
  Then, by the exact sequence  
  \[
    0\to\ker(\id_N \otimes \phi|)\to\ker(\id_N \otimes \phi)\to\ker(\id_N \otimes i),
  \]
  $\ker(\id_N \otimes \phi)$ is $\cG$-torsion.

  Now, suppose that $N$ is a $\cG$-torsion module. Applying $\blank\otimes_B B \xrightarrow{\id \otimes \phi} \blank\otimes_B A$
  to the short exact sequence $0\to\Omega\to Q\to N\to 0$,
  where $Q$ is $B$-projective, yields the following exact commutative diagram,
  \begin{equation*}
    \begin{tikzcd}
      & 0 & \Omega \otimes_B B  & Q \otimes_B B  & N \otimes_B B & 0\\
      0 & \Tor^B_1(N, A) & \Omega\otimes_B A & Q\otimes_B A & N\otimes_B A
      & 0
      \arrow[from=1-2,]{1-3}[]{}
      \arrow[from=1-3,]{1-4}[]{}
      \arrow[from=1-4,]{1-5}[]{}
      \arrow[from=1-5,]{1-6}[]{}
      \arrow[from=2-1,]{2-2}[]{}
      \arrow[from=2-2,]{2-3}[]{}
      \arrow[from=2-3,]{2-4}[]{}
      \arrow[from=2-4,]{2-5}[]{}
      \arrow[from=2-5,]{2-6}[]{}
      \arrow[from=1-3,]{2-3}[]{\id \otimes \phi}
      \arrow[from=1-4,]{2-4}[]{\id \otimes \phi}
      \arrow[from=1-5,]{2-5}[]{\id \otimes \phi}
    \end{tikzcd}
  \end{equation*}
  where the kernels and the cokernels of the vertical maps are $\cG$-torsion
  by the discussion in the last paragraph.
  Applying the exact quotient functor $\pi$ to the exact commutative diagram above, it follows that both $N\otimes_B A$ and $\Tor^B_1(N, A)$ are $\cG$-torsion, and, consequently, $\cF$-torsion. Therefore, $\phi_!$ induces a functor $\overline{\phi_!}  \colon  \QModG{B} \to \QModF{A}$ by \ref{prop:induced-functors}.

  It is left to show that 
  $\overline{\phi_!}\colon \QModG{B}\leftrightarrows\QModF{A}
  \colon\overline{\phi^*}$
  is an equivalence of categories.
  On one hand, the map $\id_N \otimes\phi$ induces a natural isomorphism between the identity functor
  and $\overline{\phi^*} \overline{\phi_!}$.
  On the other hand, note that for any $M\in\Mod A$, the natural ($A$-module) epimorphism
  $\psi\colon M\otimes_B A\to M$
  splits as a $B$-module morphism, with the section morphism
  $\id_M \otimes\phi\colon M \cong M\otimes_B B \to M\otimes_B A$.
  Since $\pi(\id_M \otimes\phi)$ is an isomorphism in $\QModG B$,
  the kernel of the surjective morphism $\psi$ is $\cG$-torsion,
  and thus $\cF$-torsion.
  Hence $\psi$ induces a natural isomorphism between
  $\overline{\phi_!} \overline{\phi^*}$
  and the identity functor.

  (2) Obviously, the outer square is commutative.
  The uniqueness of adjoint functors yields
  the commutativity of the inner square.

  (3) It follows from \ref{lem:adjunction-diagram}.
\end{proof}

\ref{lem:equiv-quot-cat} applies to the canonical ring morphism $\eta^B\colon B \to B_{\cG}$, where $B_{\cG}$ is endowed with the Gabriel topology $\cG'$ induced by $\cG$.

\begin{corollary}\label{cor:eta_A-induces-equi}
  Keep the above notation.
  \begin{enumerate}[ref=\ref{cor:eta_A-induces-equi}(\arabic*)]
    \item\label{enum:eta_A-induces-equi-i} The functors $(\eta^B)_!$ and $(\eta^B)^*$ associated to $\eta^B\colon B\to B_{\cG}$
    induce an equivalence
    \[
    \overline{(\eta^B)_!}\colon  \QModG{B}\simeq\QModGG{B_{\cG}}  \colon\overline{(\eta^B)^*}.
    \]
    \item\label{enum:eta_A-induces-equi-ii} There are natural isomorphisms
      \[\pi_{\cG'}\comp(\eta^B)_!\cong \overline{(\eta^B)_!}\comp\pi_{\cG}
      \text{\quad and\quad}
      (\eta^B)^*\comp\omega_{\cG'}\cong \omega_{\cG}\comp\overline{(\eta^B)^*}.\]
    \item $(\eta^B)^*$ sends $\cG'$-closed $B_{\cG}$-modules to $\cG$-closed $B$-modules.
  \end{enumerate}
\end{corollary}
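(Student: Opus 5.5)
The plan is to apply \ref{lem:equiv-quot-cat} to the ring homomorphism $\phi=\eta^B\colon B\to B_{\cG}$, with $A=B_{\cG}$ carrying the Gabriel topology $\cF=\cG'$ induced by $\cG$. All three parts of the corollary are then the three parts of that lemma, once we check its hypothesis: that $\ker(\eta^B)$ and $\coker(\eta^B)$ are $\cG$-torsion.

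For the kernel, \ref{lem:kernel-eta} applied to the $B$-module $B$ gives $\ker(\eta^B)=\tau_{\cG}(B)$, which is $\cG$-torsion. For the cokernel, this is the standard fact that for every module $M$ the cokernel of $\eta^M\colon M\to M_{\cG}$ is $\cG$-torsion (\cite[Chap.\@ IX, Lem.\@ 1.2]{stenstromRingsQuotients1975}, the same reference cited in \ref{lem:kernel-eta}).

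If a self-contained argument is preferred, take $x\in B_{\cG}=\dlim_{I\in\cG}\Hom_B(I,B/\tau(B))$, represented by $f\colon I\to B/\tau(B)$. For each $a\in I$ we have $xa=\eta^B(b)$, where $b$ is any lift of $f(a)$. Indeed, $\eta^{B/\tau(B)}$ is injective and $B_{\cG}\cong (B/\tau(B))_{\cG}$, so the identity $xa=\eta(f(a))$ can be checked there. Hence $\bar x I=0$ in $\coker(\eta^B)$, so the cokernel is $\cG$-torsion. This verification is the only step with any content, and I expect it to be the main (mild) obstacle.

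With the hypothesis verified, the three parts follow directly:
\begin{itemize}
\item[(1)] This is \ref{lem:equiv-quot-cat}(1) with $\phi_!=(\eta^B)_!=\blank\otimes_B B_{\cG}$ and $\phi^*=(\eta^B)^*$. Note the direction: $\overline{(\eta^B)_!}$ goes from $\QModG B$ to $\QModGG{B_{\cG}}$.
\item[(2)] The first isomorphism $\pi_{\cG'}(\eta^B)_!\cong\overline{(\eta^B)_!}\pi_{\cG}$ is the commutativity of the outer square in \ref{lem:equiv-quot-cat}(2), i.e.\@ the statement that $\overline{(\eta^B)_!}$ is induced by $(\eta^B)_!$. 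The second, $(\eta^B)^*\omega_{\cG'}\cong\omega_{\cG}\overline{(\eta^B)^*}$, is the commutativity of the inner square, which comes from uniqueness of adjoints.
\item[(3)] This is \ref{lem:equiv-quot-cat}(3), which itself follows from (2) via \ref{lem:adjunction-diagram}.
\end{itemize}
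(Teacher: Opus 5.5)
Your proposal is correct and is essentially the paper's proof. The paper likewise only observes that $\ker(\eta^B)$ and $\coker(\eta^B)$ are $\cG$-torsion (citing Stenstr\"om, Chap.~IX, Lems.~1.2 and 1.5), and then reads off (1)--(3) from \ref{lem:equiv-quot-cat} with $\phi=\eta^B$ and $\cF=\cG'$.

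Your optional self-contained check of the cokernel is also sound, but the stated justification can be replaced by a direct one. For $a\in I$ one has $(I:a)=B$, so $x\,\eta^B(a)$ is represented on $B$ by $c\mapsto f(ac)=f(a)c$, which is exactly the representative of $\eta^B(b)$. The injectivity of $\eta^{B/\tau(B)}$ is not needed.
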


\begin{proof}
  Note that both the kernel and the cokernel of $\eta^B$ are $\cG$-torsion (see \cite[Chap. IX, Lems.\@ 1.2 and 1.5]{stenstromRingsQuotients1975}).
  Then (1)--(3) are restatements of \ref{lem:equiv-quot-cat}.
\end{proof}

\subsection{Depth with respect to a Gabriel topology}

Let $\cF$ be a Gabriel topology on $A$.
For any $M\in\Mod A$,
the \dfn{$\cF$-depth} of $M$
is defined by
\begin{equation*}
  \depF M\defeq
  \inf\left\{i\;\middle\vert\;\Ext_A^i(A\div I, M)\neq 0
  \text{ for some }I\in\cF\right\}.
\end{equation*}
It is clear that $M$ is $\cF$-torsion-free if and only if $\depF M\geq 1$, and $\cF$-closed if and only if $\depF{M}\geq 2$.
The following lemmas relate depth with respect to a Gabriel topology to derived torsion functors.

\begin{lemma}\label{lem:tf-4-term-exact-seq}
  Let $\cF$ be a Gabriel topology on a ring $A$,
  and $M$ an $\cF$-torsion-free $A$-module.
  Then there is a short exact sequence
  \begin{equation*}
    \begin{tikzcd}
      0 \rar & M \rar["\eta^M"] & \omega\pi(M) \rar & \uR^1\tau(M) \rar & 0.
    \end{tikzcd}
  \end{equation*}
\end{lemma}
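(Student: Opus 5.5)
The plan is to compare two descriptions of the same module. On one side, $\omega\pi(M)=\dlim_{I\in\cF}\Hom_A(I,M)$; since $M$ is torsion-free, $M/\tau(M)=M$. On the other side, $\uR^1\tau(M)\cong\dlim_{I\in\cF}\Ext^1_A(A\div I,M)$. For each $I\in\cF$, I apply $\Hom_A(\blank,M)$ to the short exact sequence $0\to I\to A\to A\div I\to 0$. This gives an exact sequence
\begin{equation*}
  0\to\Hom_A(A\div I,M)\to\Hom_A(A,M)\to\Hom_A(I,M)\to\Ext^1_A(A\div I,M)\to\Ext^1_A(A,M)=0.
\end{equation*}
Here $\Hom_A(A,M)\cong M$, and the last term vanishes because $A$ is projective.

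These sequences are natural in $I$ with respect to the inclusions $J\subseteq I$ that make $\cF$ a directed system. For such an inclusion, restriction along $J\hookrightarrow I$ and the map induced by $A\div J\to A\div I$ commute with the connecting maps. The middle term $\Hom_A(A,M)=M$ is constant along the system, with identity transition maps. Since filtered direct limits of abelian groups are exact, I take $\dlim_{I\in\cF}$ and obtain an exact sequence
\begin{equation*}
  0\to\tau(M)\to M\to\dlim_{I\in\cF}\Hom_A(I,M)\to\uR^1\tau(M)\to 0.
\end{equation*}
The first term is $\tau(M)=\dlim_{I}\Hom_A(A\div I,M)$, which is zero by the torsion-free hypothesis. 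The third term is $\omega\pi(M)$.

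It then remains to check that the map $M\to\dlim_I\Hom_A(I,M)$ obtained this way is exactly $\eta^M$. This map sends $m$ to the class of $(x\mapsto mx)\colon I\to M$, and that is precisely the canonical morphism $M\to M_{\cF}$ fixed by the paper's choice of $\omega$ and $\eta$. Injectivity of $\eta^M$ is consistent with \ref{lem:kernel-eta}.

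The only point needing care is the naturality bookkeeping: the connecting maps must be compatible with the transition maps, so that the colimit of the sequences is the claimed sequence. This follows from naturality of the long exact $\Ext$-sequence with respect to the morphism of short exact sequences induced by $J\subseteq I$. I also need the identification $\uR^1\tau\cong\dlim\Ext^1(A\div I,\blank)$, which the paper has already recalled.
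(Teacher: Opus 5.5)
Your proposal is correct and follows the paper's proof: apply $\Hom_A(\blank, M)$ to $0\to I\to A\to A/I\to 0$, take the direct limit over $I\in\cF$, and use $\tau(M)=0$. Your additional checks, namely the compatibility of the connecting maps with the transition maps and the identification of the middle map with $\eta^M$, are details the paper leaves implicit.
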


\begin{proof}
  By applying $\Hom_A(\blank, M)$ to the exact sequence
  $0\to I\to A\to A\div I\to 0$,
  one obtains the exact sequence
  \begin{equation*}
    \begin{tikzcd}[cramped,column sep=1em]
      0 \rar & \Hom_A(A\div I, M) \rar & M \rar & \Hom_A(I, M) \rar
      & \Ext_A^1(A\div I, M) \rar & 0.
    \end{tikzcd}
  \end{equation*}
  Taking the direct limit
  yields the desired result as $\tau(M)=0$.
\end{proof}

\begin{lemma}
  %[{{\cite[Prop.\@ 4.5.6]{popescuAbelianCategoriesApplications1973}}}]
  \label{lem:depth-inj-res}
  Let $\cF$ be a Gabriel topology on a ring $A$, and $M$ an $A$-module
  with the minimal injective resolution
  $0 \to M \to Q^0 \stackrel{d^0}{\to} Q^1 \to \dotsb$.
  % \begin{equation*}
  %   \begin{tikzcd}[cramped,column sep=1em]
  %     0 & M & Q^0 & Q^1 & \dotsb
  %     \arrow[from=1-1,]{1-2}[]{}
  %     \arrow[from=1-2,]{1-3}[]{}
  %     \arrow[from=1-3,]{1-4}[]{d^0}
  %     \arrow[from=1-4,]{1-5}[]{}
  %   \end{tikzcd}
  % \end{equation*}
  Then, for any fixed integer $n\geq 0$, $\depF M\geq n+1$ if and only if $Q^i$ is $\cF$-torsion-free for all $i\leq n$.
\end{lemma}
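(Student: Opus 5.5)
The plan is to compute $\Ext_A^i(A\div I, M)$ directly from the minimal injective resolution and induct on $n$. Applying $\Hom_A(A\div I,\blank)$ to $0\to M\to Q^\bullet$ gives $\Ext_A^i(A\div I,M)$ as the $i$-th cohomology of the complex $\Hom_A(A\div I,Q^\bullet)$. Taking the direct limit over $I\in\cF$, which is exact, yields
\[
  \uR^i\tau(M)\cong H^i\bigl(\tau(Q^\bullet)\bigr).
\]
Moreover, $\depF M\geq n+1$ holds if and only if $\Ext_A^i(A\div I,M)=0$ for all $I\in\cF$ and $i\leq n$. So the task is to relate the vanishing of these Ext groups to the vanishing of $\tau(Q^i)$.

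For the direction ($\Leftarrow$): if $\tau(Q^i)=0$ for all $i\leq n$, then $\Hom_A(A\div I,Q^i)\subseteq\tau(Q^i)=0$ for each $I\in\cF$. Hence the complex $\Hom_A(A\div I,Q^\bullet)$ vanishes in degrees $\leq n$, and so does its cohomology. Therefore $\depF M\geq n+1$.

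For the direction ($\Rightarrow$), argue by induction on $i\leq n$ that $\tau(Q^i)=0$, using minimality, which means $\ker d^{i}\subseteq Q^{i}$ is essential for each $i$ (with $\ker d^0=M$).

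The base case $i=0$: $\depF M\geq 1$ means $M$ is torsion-free. Since $\tau(Q^0)\cap M=\tau(M)=0$ and $M$ is essential in $Q^0$, we get $\tau(Q^0)=0$.

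For the inductive step, suppose $\tau(Q^j)=0$ for all $j<i$, with $i\leq n$. Then $\Hom_A(A\div I,Q^j)=0$ for $j<i$, so
\[
  \Ext^i_A(A\div I,M)\cong\ker\bigl(\Hom_A(A\div I,Q^i)\to\Hom_A(A\div I,Q^{i+1})\bigr)=\Hom_A(A\div I,\ker d^i).
\]
Taking limits over $I\in\cF$ gives $\tau(\ker d^i)\cong\uR^i\tau(M)=0$. Now $\tau(Q^i)\cap\ker d^i=\tau(\ker d^i)=0$, and $\ker d^i$ is essential in $Q^i$, so $\tau(Q^i)=0$.

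The one subtle point is the identification $\Ext^i_A(A\div I,M)\cong\Hom_A(A\div I,\ker d^i)$, which requires the lower terms of the complex to vanish for every $I$, not only in the limit. This holds because $\Hom_A(A\div I,Q^j)\subseteq\tau(Q^j)$ for each $I\in\cF$. Beyond that, only the standard fact that a minimal injective resolution has each $\ker d^i$ essential in $Q^i$ is needed.
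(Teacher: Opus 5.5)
Your proof is correct and follows essentially the same route as the paper. Both argue by induction, using that an $\cF$-torsion-free module has $\cF$-torsion-free injective envelope, and that once $Q^{i-1}$ is $\cF$-torsion-free, $\tau(\ker d^i)$ is controlled by $\uR^i\tau(M)$. The only difference is presentational: you read this off directly from the complex $\Hom_A(A\div I, Q^\bullet)$ for each $I\in\cF$, whereas the paper uses the exact sequence $0=\tau(Q^{n-1})\to\tau(\Omega^n)\to\uR^1\tau(\Omega^{n-1})\cong\uR^n\tau(M)$, obtained by dimension shifting.
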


\begin{proof}
  The `if' part is trivial. For the `only if' part,
  when $n=0$, it is nothing but the fact that $\cF$-torsion-free modules are closed
  under taking injective envelopes.
  We proceed by assuming that the assertion holds for $n-1$.
  Let $\Omega^i=\ker(d^i)$.
  It suffices to show that $\Omega^n$ is $\cF$-torsion-free
  as $Q^n$ is its injective envelope.
  %Applying the left exact torsion functor $\tau$ to 
  By inductive assumption, the short exact sequence
  $0\to\Omega^{n-1}\to Q^{n-1}\to \Omega^{n}\to 0$
  yields the exact sequence
  \begin{equation*}
   0 = \tau(Q^{n-1})\to\tau(\Omega^{n})\to\uR^1\tau(\Omega^{n-1}).
  \end{equation*}
  Since $\depF M\geq n+1$, it follows that
  \begin{equation*}
    \uR^1\tau(\Omega^{n-1})\cong\uR^n\tau(M)
    \cong\dlim_{I\in\cF}\Ext_A^n(A\div I, M)=0.
  \end{equation*}
  Hence, $\tau(\Omega^n) =0$ and $\Omega^n$ is $\cF$-torsion-free.
\end{proof}

More generally, the $\cF$-depth of a module may be read from the 
derived torsion functors as shown by the next lemma.
\begin{lemma}%[{{\cite[Exer.\@ IX.2.10]{stenstromRingsQuotients1975}}}]
  \label{lem:depth-lcd}
  Let $\cF$ be a Gabriel topology on a ring $A$.
  % \begin{enumerate}
  %   \item For any $M\in\Mod A$, $\depF M=\inf\{i\mid\uR^i\tau(M)\neq 0\}.$
  %   \item A module $M$ is $\cF$-closed if and only if $\depF{M}\geq 2$.
  % \end{enumerate}
For any $M\in\Mod A$, $\depF M=\inf\{i\mid\uR^i\tau(M)\neq 0\}.$
\end{lemma}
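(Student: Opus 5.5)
We show that both quantities equal the least $i$ at which some object in the respective family is nonzero, by proving for each $n\geq 0$ that
\[
\Ext_A^i(A\div I,M)=0 \text{ for all } I\in\cF,\ i\leq n
\iff \uR^i\tau(M)=0 \text{ for all } i\leq n .
\]
The direction ``$\Rightarrow$'' is immediate from $\uR^i\tau(M)\cong\dlim_{I\in\cF}\Ext_A^i(A\div I,M)$. The content lies in ``$\Leftarrow$'', since a direct limit can vanish without the individual terms vanishing.

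For ``$\Leftarrow$'' we use the minimal injective resolution $0\to M\to Q^0\to Q^1\to\dotsb$, with cosyzygies $\Omega^i=\ker(Q^i\to Q^{i+1})$, so that $\Omega^0=M$. We show by induction on $i\leq n$ that $Q^i$ is $\cF$-torsion-free. Granting this, the $\cF$-torsion-free injectives $Q^0,\dots,Q^n$ satisfy $\Hom_A(A\div I,Q^i)=0$. Computing $\Ext_A^i(A\div I,M)$ from the resolution then gives $0$ for all $i\leq n$ and $I\in\cF$. This is the trivial ``if'' direction of \ref{lem:depth-inj-res}, so at this point we may simply invoke that lemma to conclude $\depF M\geq n+1$.

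The induction mirrors the proof of \ref{lem:depth-inj-res}, with the hypothesis on $\uR^i\tau$ replacing the hypothesis on $\Ext$.

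For $i=0$: $\tau(M)=\uR^0\tau(M)=0$. Since $\cF$-torsion-free modules are closed under injective envelopes, $Q^0$ is torsion-free.

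For the inductive step, assume $Q^0,\dots,Q^{i-1}$ are torsion-free, hence $\tau$-acyclic (injective). Dimension shifting gives $\uR^1\tau(\Omega^{i-1})\cong\uR^i\tau(M)=0$. The sequence $0\to\Omega^{i-1}\to Q^{i-1}\to\Omega^i\to 0$ then yields
\[
0=\tau(Q^{i-1})\to\tau(\Omega^i)\to\uR^1\tau(\Omega^{i-1})=0,
\]
so $\tau(\Omega^i)=0$. As $Q^i$ is the injective envelope of $\Omega^i$, $Q^i$ is torsion-free.

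This gives $\depF M\geq \inf\{i\mid \uR^i\tau(M)\neq 0\}$, and the reverse inequality follows from the direct-limit formula. Both infima are $\infty$ simultaneously, so the statement holds in all cases. The only real obstacle is the non-injectivity of passing to the direct limit, and this is handled by routing through torsion-freeness of the injective resolution.
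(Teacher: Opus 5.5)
Your proof is correct, but it handles the nontrivial inequality differently from the paper.

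The paper obtains $\depF M\leq\inf\{i\mid\uR^i\tau(M)\neq 0\}$ from \ref{lem:depth-inj-res}. For the reverse inequality it works directly at the critical degree and treats $\depF M=0$ separately. If $\depF M=n+1$, then \ref{lem:depth-inj-res} gives $\Ext_A^n(I/J,M)=0$ for all $I\supseteq J$ in $\cF$, since $I/J$ is torsion. Hence the transition maps $\Ext_A^{n+1}(A/I,M)\to\Ext_A^{n+1}(A/J,M)$ are injective. The direct limit is therefore a direct union, and a nonzero $\Ext_A^{n+1}(A/I,M)$ survives in $\uR^{n+1}\tau(M)$.

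You instead prove the contrapositive. You observe that the induction in the proof of \ref{lem:depth-inj-res} only ever uses the vanishing of $\uR^i\tau(M)$; indeed the paper's proof of that lemma goes through $\uR^1\tau(\Omega^{n-1})\cong\uR^n\tau(M)=0$. So that induction already yields a three-way equivalence: $\depF M\geq n+1$ iff $\uR^i\tau(M)=0$ for all $i\leq n$ iff $Q^0,\dotsc,Q^n$ are $\cF$-torsion-free.

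Your route avoids the direct-union argument and the separate depth-$0$ case, and is arguably more economical. The paper's route gives slightly more: in degree $d=\depF M$, every $\Ext_A^d(A/I,M)$ embeds into $\uR^d\tau(M)$.

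One small wording point: the $\tau$-acyclicity of $Q^j$ used in dimension shifting comes from injectivity, not from torsion-freeness. The ``hence'' in your inductive step should go; torsion-freeness of $Q^{i-1}$ is needed only for $\tau(Q^{i-1})=0$.
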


\begin{proof}
  %The `$\leq$' part is trivial.
  By \ref{lem:depth-inj-res}, $\depF M \leqslant \inf\{i\mid\uR^i\tau(M)\neq 0\}.$
  If $\depF M =0$, then $\tau (M) \neq 0$. So, $\inf\{i\mid\uR^i\tau(M)\neq 0\} =0$.
  Now, suppose $0 < \depF M=n + 1 < \infty$.
  Then, there is some $I\in\cF$ such that $\Ext_A^{n+1}(A\div I, M) \neq 0$.
  On the other hand, for any right ideals $I \supmodule J \in \cF$, $\Ext_A^{n}(I\div J, M)=0$
  by \ref{lem:depth-inj-res}
  as $I\div J$ is $\cF$-torsion.
  % as a quotient of $A\div J$.
% Then by applying $\Hom_A(\blank, M)$ to 
 It follows from the exact sequence
  $ 0\to I\div J\to A\div J\to A\div I\to 0$
  that
   \[ 0=\Ext_A^{n}(I\div J, M)\to\Ext_A^{n+1}(A\div I, M)
    \to\Ext_A^{n+1}(A\div J, M)\]
  is exact,
  which implies that $\dlim_{I}\Ext_A^{n+1}(A\div I, M)$
  is a direct union. 
  Hence, $\uR^{n+1}\tau(M)\cong \dlim_{I}\Ext_A^{n+1}(A\div I, M)\neq 0$, and $\inf\{i\mid\uR^i\tau(M)\neq 0\}\leqslant n + 1 = \depF M$.
  % (2) It follows from (1) and \ref{lem:tf-4-term-exact-seq}.
\end{proof}

\begin{remark}\label{rem:depth-basis}
  Recall that a \dfn{basis} of a Gabriel topology $\cF$ is a subset $\cB$ of $\cF$
  such that every $I\in\cF$ contains some $J\in\cB$.
  Since $\cB$ is cofinal in $\cF$ with respect to reverse inclusion,
  the proof of \ref{lem:depth-lcd} shows that, for any $M\in\Mod A$,
  \[\inf\left\{i\;\middle\vert\;\Ext_A^i(A\div I, M)\neq 0
  \text{ for some }I\in\cB\right\}
  =\inf\{i\mid\uR^i\tau(M)\neq 0\}=\depF{M}.\]
\end{remark}

  % !TeX root = ../main.tex

\section{Ampleness with respect to a Gabriel topology}
\label{progenerators in Quotient Categories}

Let $\cC$ be an abelian category. The main objects considered in this section are those
$\cM$ in $\cC$ that are (fully) ample
with respect to a Gabriel topology on $B=\End_{\cC}(\cM)$. 
Under certain hypotheses, it is shown that such an $\cM$ induces an equivalence between $\cC$ and a quotient category of $\mod B$ (or $\Mod B$).
This study is motivated by Artin and Zhang's use of noncommutative ampleness to characterize noncommutative projective schemes as graded quotient categories \cite[Thm.\@ 4.5]{artinNoncommutativeProjectiveSchemes1994}.
Noncommutative ampleness is also used to establish equivalences between graded quotient categories in the context of noncommutative resolutions of noncommutative isolated singularities \cite{liNoncommutativeResolutionsASGorenstein2026,liNoncommutativeResolutionsNoncommutative2026}.

\subsection{Noncommutative analogues for ampleness}\label{subsec:ample-explanation}
We first recall the definition of ampleness given in \cite{artinNoncommutativeProjectiveSchemes1994}.

\begin{definition}[{\cite[(4.2.1)]{artinNoncommutativeProjectiveSchemes1994}}]
  \label{def:AZ-ample}
  Let $\cC$ be an abelian category, $\cM\in\cC$ and $s$ be an auto-equivalence of $\cC$.
  The pair $(\cM, s)$ is called \dfn{ample} if it satisfies the following two conditions.
  \begin{enumerate}[beginpenalty=10000,midpenalty=10000,ref={(\arabic*) in \ref{def:AZ-ample}}]
    \item\label{enum:AZ-ample-1} For any $\cX \in \cC$, there are some positive integers $l_1,l_2,\dotsc,l_p$ and an epimorphism $\bigoplus_{i=1}^ps^{-l_i}(\cM)\to\cX$ in $\cC$;
    \item\label{enum:AZ-ample-2} For any epimorphism $\cX\to\cY$ in $\cC$, there is an integer $n_0$ such that the induced map $\Hom_{\cC}(s^{-n}(\cM),\cX)\to\Hom_{\cC}(s^{-n}(\cM),\cY)$ is surjective for every $n\geqslant n_0$.
  \end{enumerate}
\end{definition}

The notion of ampleness in \ref{def:AZ-ample} is motivated by the classical notion of ampleness for line bundles in algebraic geometry. Indeed, by \cite[Theorem 1.2.6]{lazarsfeldPositivityAlgebraicGeometry2004}, a line bundle $\mathcal{L}$ on a projective scheme $X$ is ample if and only if it satisfies the global generation condition \cite[Theorem 1.2.6(3)]{lazarsfeldPositivityAlgebraicGeometry2004};
equivalently, Serre's vanishing theorem holds for $\mathcal{L}$ \cite[Theorem 1.2.6(2)]{lazarsfeldPositivityAlgebraicGeometry2004}.
In fact, if $\cC=\qgr(A)$ and $\cM=\pi(A)$ for a graded quotient algebra $A$ of a polynomial algebra, 
then condition \ref{enum:AZ-ample-1} is a variant of the global generation condition, while condition \ref{enum:AZ-ample-2} follows from Serre's vanishing theorem.

The object $\cM$ is called a \dfn{finite graded generator} of $(\cC, s)$
if condition \ref{enum:AZ-ample-1} holds without restrictions on the integers $l_1, \dotsc, l_p$.

Let $(f_i)\colon\bigoplus_{i\in I}\cX_i\to\cY$ denote the canonical morphism for morphisms $f_i\colon\cX_i\to\cY \, (i \in I)$, whenever the coproduct exists.
Let $\gHom_{\cC}(\blank,\blank)\defeq\bigoplus_{n\in\ZZ}\Hom_{\cC}(\blank, s^n(\blank))$ be the graded Hom functor
(see \cite[p.\@ 248]{artinNoncommutativeProjectiveSchemes1994}).
The following observation serves as our starting point for modifying the ample conditions and defining (full) ampleness with respect to a (graded) Gabriel topology.

\begin{lemma}\label{lem:AZ-equiv-cond} % Let $\cC$ be an abelian category, $\cM\in\cC$ and $s$ be an auto-equivalence of $\cC$.
  Keep the notation as above. Suppose that $B\defeq\gHom_{\cC}(\cM, \cM)$ is bounded-below as a $\ZZ$-graded ring (i.e.\@, $B_n=0$ for $n\ll 0$). Then the following conditions are equivalent.
  \begin{enumerate}
    \item $(\cM, s)$ satisfies condition \ref{enum:AZ-ample-1}.
    \item For any $\cX\in \cC$ and $n\in\ZZ$,
      there exist integers $l_1,l_2,\dotsc,l_p$ and an epimorphism
      $(f_i)\colon \bigoplus_{i=1}^p s^{-l_i}(\cM)\rightarrow \cX$
      such that $\{s^{l_1}(f_1), \dotsc, s^{l_p}(f_p)\} \subseteq \gHom_{\cC}(\cM,\cX)B_{\geq n}$.
    \item $\cM$ is a finite graded generator of $(\cC, s)$,
      and for any $n \in \ZZ$,
      there are some positive integers $l_1,l_2,\dotsc,l_p$
      and an epimorphism $(f_i): \bigoplus_{i=1}^ps^{-l_i}(\cM)\to s^n(\cM)$ in $\cC$ (then $\{s^{l_1}(f_1), \dotsc, s^{l_p}(f_p)\} \subseteq B_{\geq n}$).
  \end{enumerate}
\end{lemma}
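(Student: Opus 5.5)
We prove the cycle (1)$\Rightarrow$(2)$\Rightarrow$(3)$\Rightarrow$(1). First we fix the degree conventions. An element of $\gHom_{\cC}(\cM,\cX)$ of degree $l$ is a morphism $\cM\to s^l(\cX)$, and it corresponds to a morphism $f\colon s^{-l}(\cM)\to\cX$ via $f\mapsto s^l(f)$. The right $B$-action is $g\cdot b\defeq s^{k}(g)\comp b$ for $b\in B_k$. The basic computation, used throughout, is the following. Let $g\colon s^{-m}(\cM)\to\cX$ and $b'\colon s^{-l}(\cM)\to s^{-m}(\cM)$, and put $b\defeq s^{l}(b')\in B_{l-m}$. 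Then
\[
  s^{l}(g\comp b')=s^{l-m}\bigl(s^m(g)\bigr)\comp b=s^m(g)\cdot b .
\]
Hence a composite of this form lies in $\gHom_{\cC}(\cM,\cX)B_{\geq n}$ as soon as $l-m\geq n$. We also use that a composite of epimorphisms is an epimorphism, and that $s$ preserves epimorphisms and finite coproducts.

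\textit{(1)$\Rightarrow$(2).} Fix $\cX$ and $n$. By (1) there is an epimorphism $(g_j)\colon\bigoplus_j s^{-m_j}(\cM)\to\cX$. Applying (1) to $s^{n}(\cM)$ gives an epimorphism $\bigoplus_i s^{-l'_i}(\cM)\to s^n(\cM)$ with all $l'_i\geq 1$. Applying $s^{-n-m_j}$ to it yields an epimorphism $\bigoplus_i s^{-(l'_i+n+m_j)}(\cM)\to s^{-m_j}(\cM)$. Composing with the $g_j$ gives an epimorphism onto $\cX$ whose components are of the form $g_j\comp b'$ with $l-m_j=l'_i+n\geq n+1$. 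By the computation above, their $s^{l}$-images lie in $\gHom_{\cC}(\cM,\cX)B_{\geq n}$. Bounded-belowness is not needed in this step.

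\textit{(2)$\Rightarrow$(3).} Condition (2) clearly implies that $\cM$ is a finite graded generator. This is the step where bounded-belowness enters, and I expect the degree bookkeeping here to be the main (if modest) obstacle: one must ensure that the $l_i$ come out \emph{positive}. Let $B_k=0$ for $k<k_0$. Note that $\gHom_{\cC}(\cM,s^n(\cM))_a=B_{a+n}$ vanishes for $a<k_0-n$. Apply (2) to $\cX=s^n(\cM)$ with the integer $N\defeq\max(n,\,1+n-k_0)$. Each nonzero homogeneous element of $\gHom_{\cC}(\cM,s^n(\cM))B_{\geq N}$ has degree at least $(k_0-n)+N\geq 1$. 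The summands whose components are zero can be discarded without losing surjectivity, so the epimorphism provided by (2) has all $l_i\geq 1$. Moreover, $s^{l_i}(f_i)\in B_{l_i+n}\subseteq B_{\geq n}$, which is the parenthetical claim in (3).

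\textit{(3)$\Rightarrow$(1).} Given $\cX$, take an epimorphism $\bigoplus_j s^{-m_j}(\cM)\to\cX$ from the finite graded generator property. Choose $n\geq 1-\min_j m_j$, and take the epimorphism $\bigoplus_i s^{-l_i}(\cM)\to s^n(\cM)$ from (3), with $l_i\geq1$. Applying $s^{-n-m_j}$ gives epimorphisms $\bigoplus_i s^{-(l_i+n+m_j)}(\cM)\to s^{-m_j}(\cM)$ whose exponents satisfy $l_i+n+m_j\geq 1+n+m_j\geq 1$. Composing these with the epimorphism onto $\cX$ produces an epimorphism from a finite direct sum of $s^{-l}(\cM)$ with all $l\geq 1$, which is condition (1).
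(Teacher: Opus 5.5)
Your proof is correct and essentially the paper's argument: the paper proves (1)$\Rightarrow$(3) (trivial), (3)$\Rightarrow$(1), (2)$\Rightarrow$(3) and (3)$\Rightarrow$(2) rather than a single cycle, but it uses the same two ingredients, namely composing $s$-twisted epimorphisms and the degree bound from bounded-belowness. The paper phrases that bound as cofinality of $\{B_{\geq n}\}$ and $\{BB_{\geq n}\}$, which is exactly your choice of $N$; your explicit discarding of zero components $f_i$ settles a small point the paper's proof of (2)$\Rightarrow$(3) leaves implicit.
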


\begin{proof}
  \textit{(1) $\Rightarrow $(3).} It is trivial.

  \textit{(3)$\Rightarrow$(1).}
  Fix $\cX\in\cC$. Then there exists an epimorphism $(g_j) \colon \bigoplus_j s^{-u_j}(\cM)\rightarrow \cX$.
  Now let $n=\max\{-u_j\}$; applying the second part of (3) to $\cX=s^n(\cM)$, we obtain an epimorphism $(f_k)\colon\bigoplus_{k}s^{-v_k}(\cM)\to s^n(\cM)$,
  where $v_k>0$ for all $k$.
  Then, $u_j+n+v_k > 0$, and the morphism
  \[(g_j\comp s^{-u_j-n}(f_k))_{jk}\colon\bigoplus_j\big(\bigoplus_k s^{-u_j-n-v_k} (\cM)\big)\to\cX\]
  is an epimorphism satisfying \ref{enum:AZ-ample-1}.

  \textit{(2) $\Rightarrow$(3).} Obviously, $\cM$ is a finite graded generator of $(\cC, s)$.  Since $B$ is bounded-below, in view of the reverse inclusion, the systems $\{B_{\geq n}\}$ and $\{BB_{\geq n}\}$ are cofinal. 
  Fix an integer $n$. Then there exists an $m$ such that $BB_{\geq m} \subseteq B_{\geq n+1}$. 
  By (2), there exist integers $l_1,l_2,\dotsc,l_p$ and an epimorphism
  $(f_i)\colon \bigoplus_{i=1}^p s^{-l_i}(\cM)\rightarrow s^n(\cM)$
  such that \[\{s^{l_1}(f_1), \dotsc, s^{l_p}(f_p)\} \subseteq \gHom_{\cC}(\cM, s^n(\cM))B_{\geq m} = BB_{\geq m} \subseteq B_{\geq n+1}.\]
  Since each $s^{l_i}(f_i)$ is a morphism from $\cM$ to $ s^{n+l_i}(\cM)$,
  it follows that $l_i>0$ for each $i$.

  \textit{(3)$\Rightarrow$(2).} Fix $\cX\in\cC$ and $n\in \ZZ$.
  There are epimorphisms
  \[(g_j)\colon\bigoplus_{j=1}^q s^{-u_j}(\cM)\rightarrow \cX \text{ and }
  (h_k) \colon\bigoplus_{k=1}^{r} s^{-v_{k}}(\cM)\rightarrow s^n(\cM)\]
  where each $v_{k}>0$. 
  Therefore, the composition
  \[(g_j)\comp (s^{-n-u_j}(h_k))=(g_j\comp s^{-n-u_j}(h_k))_{jk} \colon \bigoplus_{j=1}^{q}\big(\bigoplus_{k=1}^{r} s^{-v_{k}-n-u_j}(\cM)\big)\rightarrow \cX\]
  is an epimorphism,
  and $\{g_j \comp (s^{-n-u_j}(h_k))\}_{k,j}$ are the desired morphisms, as
  \[(s^{v_k+n+u_j}(g_j)) \comp (s^{v_k}(h_k))=(s^{u_j}(g_j)) (s^{v_k}(h_k)) \in \gHom_{\cC}(\cM,\cX)B_{\geq n}.
  \qedhere\]
\end{proof}

\ref{def:AZ-ample,lem:AZ-equiv-cond} motivate the following general definition in terms of Gabriel topologies,
which we present in the ungraded setting. The graded version is given in \ref{cor:AZ-thm}.

\begin{definition}\label{dfn:progen}
  Let $\cC$ be an abelian category, and let $\cM\in\cC$. Suppose that $\cG$ is a Gabriel topology on $B=\End_{\cC}(\cM)$.
  \begin{enumerate}
  [beginpenalty=10000,midpenalty=10000,ref={(\arabic*) in \ref{dfn:progen}}]
  \item  $\cM$ is called a \dfn{finite generator} of $\cC$, if for any $\cX\in\cC$, there exists an epimorphism $\cM^{\oplus r}\to\cX$ for some integer $r$.
    \item%\label{enum:finite-gen}
      $\cM$ is called \dfn{finitely $\cG$-self-generated} if, 
      for any $J\in\cG$, there exist $f_1, \dotsc, f_r\in J$
      such that $(f_i)\colon \cM^{\oplus r}\to\cM$ is an epimorphism.
    \item\label{enum:proj} $\cM$ is called \dfn{$\cG$-projective} if, for any epimorphism
      $\cX\rightarrow\cY$ in $\cC$,
      the cokernel of $\Hom_{\cC}(\cM,\cX)\to\Hom_{\cC}(\cM,\cY)$
      % $\coker\bigl(\Hom_{\cC}(\cM,\cX)\to\Hom_{\cC}(\cM,\cY)\bigr)$
      is $\cG$-torsion as a $B$-module.
   \item \label{enum:ample}
     $\cM$ is said to be \dfn{$\cG$-ample} if it is a finite generator, finitely $\cG$-self-generated and $\cG$-projective.
  \end{enumerate}
\end{definition}

Condition \ref{enum:AZ-ample-1} cannot be formulated in terms of Gabriel topologies.
Therefore,
we interpret it as condition (3) of \ref{lem:AZ-equiv-cond} and adopt this interpretation in the definition of a finite generator and finite $\cG$-self-generation; see (1) and (2) of \ref{dfn:progen}.
On the other hand, condition (2) in \ref{lem:AZ-equiv-cond} can be adapted in the language of the Gabriel topology $\cG$ as follows.

      ($\star$) \textit{For any $\cX\in \cC$ and any $J\in\cG$, there exist $f_1,\cdots,f_r\in \Hom_{\cC}(\cM,\cX)J$ such that $(f_i):\cM^{\oplus r}\to \cX$ is an epimorphism.}

However, this applies only to bounded Gabriel topologies $\cG$, namely those admitting a basis consisting of two-sided ideals. 
For example, when $\cX=\cM$ in ($\star$), the elements $f_1,\cdots,f_r$ need not belong to $J$ unless $J$ is a two-sided ideal.

Note that condition \ref{enum:AZ-ample-2} is stronger than the graded version of condition \ref{enum:proj} in the setting of \ref{cor:AZ-thm}.
In Artin--Zhang's noncommutative Serre theorem \cite[Thm.\@ 4.5]{artinNoncommutativeProjectiveSchemes1994}, this stronger condition guarantees that $B$ satisfies an additional condition, denoted by $\chi_1$.
Assuming that $\cM$ is graded $\cG$-ample, we provide in \ref{cor:AZ-thm} a variant of this theorem that does not involve condition $\chi_1$.

The notions of Artin--Zhang's ampleness and $\cG$-ampleness formulated here apply when every object of $\cC$ is noetherian.
The following notion of fully $\cG$-ample objects
is introduced for the setting in which $\cC$ is cocomplete, that is, when $\cC$ has set-indexed coproducts.

\begin{definition}\label{dfn:progen-large}
  Let $\cC$ be a cocomplete abelian category, and let $\cM\in\cC$. Suppose that $\cG$ is a Gabriel topology on $B=\End_{\cC}(\cM)$.
  \begin{enumerate}[ref=\ref{dfn:progen-large}(\arabic*)]
    \item%\label{enum:dfn-progen-large-gen}
      $\cM$ is called \dfn{$\cG$-self-generated} if
      for any $J\in\cG$ there exists a set of morphisms
      $\{f_\lambda\}_{\lambda\in\Lambda}\subseteq J$
      such that
      $(f_\lambda)\colon \cM^{\oplus \Lambda}\rightarrow \cM$
      is an epimorphism.
    \item $\cM$ is called \dfn{$\cG$-compact} if
      for any set of objects $\{\cX_\lambda\}$ in $\cC$,
      the canonical map
      $\bigoplus_\lambda\Hom_{\cC}(\cM,\cX_\lambda)
      \to\Hom_{\cC}(\cM,\bigoplus_\lambda\cX_\lambda)$
      has a $\cG$-torsion cokernel.
   \item $\cM$ is said to be \dfn{fully $\cG$-ample} if it is a generator of $\cC$ and is $\cG$-compact, $\cG$-projective, and $\cG$-self-generated.
  \end{enumerate}
\end{definition}

\begin{remark}
  Let $A$ be a ring, $\cC=\Mod A$, $\cM\in\cC$ and $\cG$ be a Gabriel topology on $B=\End_{A}(\cM)$.
  \begin{enumerate}
    \item $\cM$ admits a $B$-$A$-bimodule structure. In this case, $\cM$ is $\cG$-self-generated if and only if $J\cM=\cM$ for every $J\in\cG$. 
    If $\cM=A$, then $B\cong A$ as a right $A$-module,
    and $A$ is $\cG$-self-generated only if $\cG=\{A\}$ is trivial.
    \item When $\cG\defeq\{B\}$ is the trivial Gabriel topology, $\cM$ is $\cG$-projective if and only if $\cM$ is projective in $\cC$.
  \end{enumerate}
\end{remark}

%\subsection{Basic properties of \texorpdfstring{$\cG$}{G}-ampleness}

The following lemma justifies the terminology of $\cG$-projective and $\cG$-compact objects.
When $\cM$ is a (finite) generator, the property of $\cM$ being (finitely) $\cG$-self-generated is characterized later in \ref{prop:always-closed}, as some preliminary preparation is required.
 
\begin{lemma}\label{lem:A2-exact}
  Let $\cC$ be an abelian category, and let $\cM\in\cC$. Suppose that $\cG$ is a Gabriel topology on $B=\End_{\cC}(\cM)$.
  Then
  \begin{enumerate}
    \item $\cM$ is $\cG$-projective if and only if $\pi\comp\Hom_{\cC}(\cM,\blank)\colon \cC\to \QModG B$ is exact.
    \item Suppose that $\cC$ is cocomplete. $\cM$ is $\cG$-compact if and only if
      $\pi\comp\Hom_{\cC}(\cM,\blank)$ preserves coproducts.
  \end{enumerate}
\end{lemma}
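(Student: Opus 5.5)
For (1), write $H\defeq\Hom_{\cC}(\cM,\blank)\colon\cC\to\Mod B$. It is left exact, and $\pi\colon\Mod B\to\QModG B$ is exact. Hence $\pi\comp H$ is left exact, so it is exact if and only if it preserves epimorphisms.

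Fix an epimorphism $p\colon\cX\to\cY$ in $\cC$ and set $C\defeq\coker\bigl(H(p)\bigr)$. We have the exact sequence $H(\cX)\to H(\cY)\to C\to 0$. Applying $\pi$ gives the exact sequence $\pi H(\cX)\to\pi H(\cY)\to\pi(C)\to 0$. Thus $\pi H(p)$ is an epimorphism if and only if $\pi(C)=0$. In $\QModG B$, $\pi(C)=0$ holds exactly when $C$ is $\cG$-torsion, since the kernel of the quotient functor is the Serre subcategory of $\cG$-torsion modules. This is precisely the condition in \ref{enum:proj}, which gives (1).

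For (2), let $\{\cX_\lambda\}$ be a set of objects of $\cC$ and $\kappa\colon\bigoplus_\lambda H(\cX_\lambda)\to H\bigl(\bigoplus_\lambda\cX_\lambda\bigr)$ the canonical map. The functor $\pi$ is a left adjoint, so it preserves coproducts. Therefore the canonical comparison map
\[
\bigoplus_\lambda\pi H(\cX_\lambda)\to\pi H\Bigl(\bigoplus_\lambda\cX_\lambda\Bigr)
\]
is identified with $\pi(\kappa)$. So $\pi\comp H$ preserves coproducts if and only if $\pi(\kappa)$ is an isomorphism for every such family, i.e.\ if and only if $\ker\kappa$ and $\coker\kappa$ are $\cG$-torsion.

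The one step that needs an argument is showing that $\kappa$ is always injective. If a finite sum of components $\sum_\lambda\iota_\lambda g_\lambda$ vanishes, compose with the canonical projections onto each $\cX_\lambda$. These projections exist in any additive category with coproducts: take the identity on the $\lambda$-th summand and zero on the others. This shows that each $g_\lambda=0$. Hence $\ker\kappa=0$, and the criterion reduces to $\coker\kappa$ being $\cG$-torsion. This is exactly $\cG$-compactness.
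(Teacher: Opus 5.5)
Your proposal is correct and follows essentially the same route as the paper: in (1) you reduce exactness of the left exact functor $\pi\comp\Hom_{\cC}(\cM,\blank)$ to the cokernels of $\Hom_{\cC}(\cM,p)$ being $\cG$-torsion, and in (2) you show the comparison map is always injective and then use that $\pi$ is exact and preserves coproducts. The only cosmetic difference is that you prove injectivity directly with the canonical projections $\bigoplus_\lambda\cX_\lambda\to\cX_\mu$, whereas the paper passes to a finite subcoproduct, where finite coproducts are biproducts; both arguments rest on the same retraction.
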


\begin{proof}
  (1) Let $\overline{F}=\pi\comp\Hom_{\cC}(\cM,\blank)\colon \cC\to \QModG B$.
    Since $\overline{F}$ is left exact, $\overline{F}$ is exact if and only if $\pi\bigl(\coker(\Hom_{\cC}(\cM,f))\bigr)=0$ for any epimorphism $f\colon\cX\rightarrow \cY$;
    if and only if $\coker(\Hom_{\cC}(\cM,f))$ is $\cG$-torsion for any epimorphism $f\colon\cX\rightarrow \cY$; if and only if $\cM$ is $\cG$-projective.

  (2) Let $\{\cX_\lambda\}$
    be a set of objects in $\cC$.
    We first claim  that the canonical map
    \[\sigma \colon \bigoplus_\lambda\Hom_{\cC}(\cM,\cX_\lambda)
    \to\Hom_{\cC}(\cM,\bigoplus_\lambda \cX_\lambda)\]
    is always injective.
    Suppose that $\sigma(f)=0$ for some $f=(f_{\lambda})_{\lambda \in \Lambda} \in \bigoplus_\lambda\Hom_{\cC}(\cM,\cX_\lambda)$. There exists a finite subset $\Lambda_f \subseteq \Lambda$ such that $f_\lambda = 0$ for all $\lambda \in \Lambda \setminus \Lambda_f$.
    By the universal property of coproducts, one obtains the following canonical commutative diagram
    \begin{equation*}
      \begin{tikzcd}
        \bigoplus_{\lambda \in \Lambda_f} \Hom_{\cC}(\cM,\cX_\lambda) & \Hom_{\cC}(\cM,\bigoplus_{\lambda \in \Lambda_f} \cX_\lambda)\\
        \bigoplus_{\lambda \in \Lambda}\Hom_{\cC}(\cM,\cX_\lambda) & \Hom_{\cC}(\cM,\bigoplus_{\lambda \in \Lambda} \cX_\lambda)
        \arrow[from=1-1,]{1-2}[]{\cong} 
        \arrow[from=2-1,]{2-2}[]{\sigma}
        \arrow[from=1-1,rightarrowtail,]{2-1}[]{}
        \arrow[from=1-2,rightarrowtail,]{2-2}[]{}
      \end{tikzcd}
    \end{equation*}  
    It follows that $f=0$, and hence $\sigma$ is injective.  
    
    Since $\pi$ is exact and commutes with coproducts, 
    $\pi(\sigma)\colon \bigoplus_\lambda \overline{F}(\cX_\lambda)
    \to \overline{F}(\bigoplus_\lambda \cX_\lambda)$ is injective.
    Therefore, $\overline{F}$ commutes with coproducts
    if and only if $\pi(\sigma)$ is surjective for any $\{\cX_\lambda\}$;
    if and only if $\coker(\sigma)$ is $\cG$-torsion for any $\{\cX_\lambda\}$;
    if and only if $\cM$ is $\cG$-compact.
\end{proof}

Let $A$ be a ring, $\cF$ be a Gabriel topology on $A$.
Recall that the ring of quotients $A_{\cF}=\End_{\QModF A}(\pi(A))$
is endowed with the topology $\cF'$ induced by $\cF$.

\begin{proposition}\label{prop:pi-A-ample}
Let $A$ be a ring equipped with a Gabriel topology $\cF$.
Let
$\eta^A\colon A\longrightarrow A_{\cF}$
be the canonical morphism, and let $\cF'$ be the Gabriel
topology on $A_{\cF}$ induced by $\cF$. Then
$\pi_{\cF}(A)$ is fully $\cF'$-ample in
$\QModF A$.

Moreover, if $A$ is right noetherian, then
$\pi_{\cF}(A)$ is $\cF'$-ample in
$\qmodF A$.
\end{proposition}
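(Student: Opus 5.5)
Set $\cC=\QModF A$, $\cM=\pi(A)$ and $B=\End_{\cC}(\cM)=A_{\cF}$. The plan is to reduce each ampleness condition to a statement about the exact functor $\pi_{\cF'}\comp\Hom_{\cC}(\cM,\blank)$. The key identification is
\[
\Hom_{\cC}(\pi(A),\blank)\cong\Hom_A(A,\omega(\blank))=\omega(\blank),
\]
under which the $B$-module structure on $\Hom_{\cC}(\cM,\cX)$ is the natural $A_{\cF}$-module structure on $\omega(\cX)$, a closed module. So $\Hom_{\cC}(\cM,\blank)$ agrees with $\omega$ followed by regarding closed $A$-modules as $A_{\cF}$-modules. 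By \ref{cor:eta_A-induces-equi}, restriction along $\eta^A$ induces an equivalence $\QModFF{A_{\cF}}\simeq\QModF A$, and $\cF'$-torsion means $\cF$-torsion after restriction.

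\textbf{Generator.} The object $\pi(A)$ generates $\cC$, since for any $\cX=\pi(X)$ the epimorphism $A^{(X)}\to X$ gives an epimorphism $\pi(A)^{(X)}\to\cX$. In the noetherian case, $X$ finitely generated gives a finite one.

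\textbf{$\cF'$-projectivity and $\cF'$-compactness.} Use \ref{lem:A2-exact}. For an epimorphism $\cX\to\cY$ in $\cC$, apply the left exact functor $\omega$ and take the cokernel $C$ of $\omega(\cX)\to\omega(\cY)$. Since $\pi\omega\cong\id$ and $\pi$ is exact, $\pi(C)=0$, so $C$ is $\cF$-torsion as an $A$-module and therefore $\cF'$-torsion as an $A_{\cF}$-module. Compactness is handled the same way. The cokernel of $\bigoplus\omega(\cX_\lambda)\to\omega(\bigoplus\cX_\lambda)$ is killed by $\pi$, because $\pi$ commutes with coproducts (it is a left adjoint) and $\pi\omega\cong\id$. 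Hence this cokernel is $\cF$-torsion.

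\textbf{$\cF'$-self-generation.} I expect this to be the main step. Take $J\in\cF'$; then $J$ is a right ideal of $A_{\cF}$ with $A_{\cF}/J$ being $\cF$-torsion. I would proceed in three steps.
\begin{enumerate}
\item Set $I=(\eta^A)^{-1}(J)$. Then $A/I$ embeds in $A_{\cF}/J$, so $I\in\cF$.
\item The elements $\eta^A(a)$ for $a\in I$ lie in $J$ and correspond to morphisms $\pi(a\cdot)\colon\pi(A)\to\pi(A)$. The induced map $\pi(A)^{(I)}\to\pi(A)$ is $\pi$ applied to $A^{(I)}\to A$, whose image is $I$.
\item Its cokernel is therefore $\pi(A/I)=0$, so the map is an epimorphism in $\cC$. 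This gives $\cF'$-self-generation, with the family taken inside $J$ as required.
\end{enumerate}
In the noetherian case, $I$ is finitely generated, say by $a_1,\dotsc,a_r$. Then $\pi(A)^{\oplus r}\to\pi(A)$ given by the $\eta^A(a_i)\in J$ is an epimorphism, which is finite $\cF'$-self-generation.

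\textbf{The noetherian setting in $\qmodF A$.} Here $\cC=\qmodF A$ is a full subcategory of $\QModF A$ with the same endomorphism ring $A_{\cF}$ for $\pi(A)$. Epimorphisms in $\qmodF A$ remain epimorphisms in $\QModF A$, because the inclusion is exact (by \ref{lem-qmodG consists of noetherian objects}, or directly from the Serre quotient construction). So $\cF'$-projectivity restricts from the large category. Finite generation and finite self-generation come from the arguments above, giving $\cF'$-ampleness.
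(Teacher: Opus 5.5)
Your proof is correct. The self-generation step is essentially the paper's argument. The paper uses the isomorphism $\pi(\sigma)\colon\pi(I)\to\pi(A)$ together with an epimorphism $A^{\oplus\Lambda}\to I$, while you use the cokernel $\pi(A/I)=0$ directly.

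Your treatment of $\cF'$-projectivity and $\cF'$-compactness takes a different and more elementary route. The paper transports along the change-of-rings equivalence $\overline{(\eta^A)_!}$ of \ref{cor:eta_A-induces-equi}: it computes $\pi_{\cF'}\Hom_{\QModF A}(\pi(A),\pi(M))\cong\overline{(\eta^A)_!}(\pi(M))$ and then invokes \ref{lem:A2-exact}. You instead identify $\Hom_{\QModF A}(\pi(A),\blank)$ with $\omega$, using only the $A$-module structure via $\eta^A$, which naturality of the adjunction provides. You then observe that the relevant cokernels are killed by $\pi$, because $\pi$ is exact, preserves coproducts, and satisfies $\pi\omega\cong\id$. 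Since $\cF'$-torsion is by definition detected after restriction to $A$, this suffices, and none of \ref{lem:equiv-quot-cat} is needed.

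What the paper's route buys is the stronger, conceptual fact that $\pi_{\cF'}\comp\Hom(\pi(A),\blank)$ is itself an equivalence $\QModF A\simeq\QModFF{A_{\cF}}$, which matches how the result is used in the Morita-theoretic section. Your route is shorter and self-contained.

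The noetherian part also differs slightly. You obtain finite self-generation from finite generation of the right ideal $I$, rather than from noetherianity of $\pi(A)$ via \ref{lem-qmodG consists of noetherian objects}. You also make explicit, via exactness of $\qmodF A\hookrightarrow\QModF A$, why $\cF'$-projectivity passes to the small category, a point the paper leaves implicit.
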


\begin{proof}
Clearly, $\pi(A)$ is a generator of
$\QModF A$. We first show that it is
$\cF'$-self-generated. Let $J\in\cF'$ and set
$I=(\eta^A)^{-1}(J)$. By \ref{lem:top-change-of-rings}, $I\in\cF$, and hence the
inclusion $\sigma\colon I \hookrightarrow A$ induces an isomorphism
$\pi(\sigma)\colon \pi(I)\simeq\pi(A)$.

Choose an epimorphism
$f=(f_\lambda)_{\lambda\in\Lambda}\colon A^{\oplus\Lambda}\to I$.
Composing with $\pi(\sigma)$ yields
an epimorphism
$(\pi(\sigma\circ f_\lambda))_{\lambda\in\Lambda}\colon
  \pi(A)^{\oplus\Lambda}\to\pi(A).$
Recall that $\eta^A$ is defined by
\[
    A\cong \Hom_A(A,A)\to \dlim \Hom_A(I,A/\tau A)=A_{\cF}=\Hom_{A/\cF}(\pi(A),\pi(A)).
\]
Thus, the endomorphism 
\[\pi(\sigma\circ f_\lambda) =\eta^A(f_\lambda(1))\in \Hom_{A/\cF}(\pi(A),\pi(A)).\]
Since $f_\lambda(1)\in I$, 
$\eta^A(f_\lambda(1))\in J$. 
% It follows from the  diagram
%   \begin{equation*}
%     \begin{tikzcd}
%       \pi(f^\lambda) & \eta^I\comp f^\lambda & \eta^A(f^\lambda(1))\\[-2ex]
%       \Hom_{\QModF A}(\pi(A), \pi(I)) & \Hom_A(A, \omega \pi(I)) & \omega \pi(I)\\
%       \Hom_{\QModF A}(\pi(A), \pi(A)) & \Hom_A(A, \omega \pi(A)) & \omega \pi(A) &[-2.4em] A_{\cF}
%       \arrow[from=1-1,mapsto,]{1-2}[]{}
%       \arrow[from=1-2,mapsto,]{1-3}[]{}
%       \arrow[from=2-1,]{2-2}[]{\cong}
%       \arrow[from=2-2,]{2-3}[]{\cong}
%       \arrow[from=3-1,]{3-2}[]{\cong}
%       \arrow[from=3-2,]{3-3}[]{\cong}
%       % \arrow[from=2-1,equal,nfold,]{2-2}[]{}
%       % \arrow[from=2-2,equal,nfold,]{2-3}[]{}
%       % \arrow[from=3-1,equal,nfold,]{3-2}[]{}
%       % \arrow[from=3-2,equal,nfold,]{3-3}[]{}
%       \arrow[from=1-1,phantom,]{2-1}[sloped,]{\in}
%       \arrow[from=1-2,phantom,]{2-2}[sloped,]{\in}
%       \arrow[from=1-3,phantom,]{2-3}[sloped,]{\in}
%       \arrow[from=2-1,equal,nfold,]{3-1}[]{}
%       \arrow[from=2-2,equal,nfold,]{3-2}[]{}
%       \arrow[from=2-3,equal,nfold,]{3-3}[]{}
%       \arrow[from=3-3,equal,nfold,]{3-4}[]{}
%     \end{tikzcd}
%   \end{equation*}
It follows that each component
$\pi(\sigma\comp f_\lambda)$ belongs to $J$, and consequently, $\pi(A)$ is
$\cF'$-self-generated.

Recall that, by \ref{enum:eta_A-induces-equi-i}, $\overline{(\eta^A)}_!$ is an equivalence.
Therefore, for any $\pi_{\cF}(M)\in \QModF A$, there are isomorphisms
\begin{align*}
  &\phantom{{}\cong{}}\pi_{\cF'}\Hom_{\QModF A}\bigl(\pi_{\cF}(A),\pi_{\cF}(M)\bigr)\\
  &\cong \pi_{\cF'}\Hom_{\QModFF {A_{\cF}}}\bigl(\overline{(\eta^A)}_!\pi_{\cF}(A),\overline{(\eta^A)}_!\pi_{\cF}(M)\bigr)\\
  &\cong \pi_{\cF'}\Hom_{\QModFF {A_{\cF}}}\bigl(\pi_{\cF'}(A_{\cF}),\pi_{\cF'}(M\otimes_A A_{\cF})\bigr)\\
  &\cong \pi_{\cF'}\omega_{\cF'}\pi_{\cF'}(M\otimes_A A_{\cF})\\
  &\cong \pi_{\cF'}(M\otimes_A A_{\cF})\\
  &\cong \overline{(\eta^A)}_!(\pi_{\cF}(M)),
\end{align*}
where the second and last isomorphisms follow from \ref{enum:eta_A-induces-equi-ii}.
Note that $\overline{(\eta^A)}_!$ is exact and preserves coproducts.
It then follows from \ref{lem:A2-exact} that $\pi(A)$ is both $\cF'$-projective and $\cF'$-compact.
Therefore, $\pi(A)$ is fully $\cF'$-ample in $\QModF A$.

Now suppose that $A$ is right noetherian. 
Since $A\in\mod{A}$ is a finite generator,
$\pi(A)$ is a finite generator of $\qmodF A$. 
Moreover, the epimorphism
$\pi(A)^{\oplus\Lambda}\to\pi(A)$ constructed above restricts to an epimorphism on some finite
subcoproduct, since $\pi(A)$ is noetherian by \ref{lem-qmodG consists of noetherian objects}.
Thus, $\pi(A)$ is finitely
$\cF'$-self-generated, and hence $\pi(A)$ is $\cF'$-ample in $\qmodF A$.
\end{proof}

\subsection{A characterization of noetherian abelian categories}
 
We first establish several properties of finitely $\cG$-self-generated objects, adapted from corresponding results in the graded setting \cite{artinNoncommutativeProjectiveSchemes1994}.
 
\begin{lemma}\label{lem:morita-i-1}
  Let $\cC$ be an abelian category.
  Suppose that $\cM\in\cC$, $B=\End_{\cC}(\cM)$, and $\cG$ is a Gabriel topology on $B$.
  If $\cM$ is finitely $\cG$-self-generated, then for any subobject $\cY \rightarrowtail \cX$ in $\cC$, 
  the quotient module
  \begin{equation*}
    \Hom_{\cC}(\cM,\cX)/\Hom_{\cC}(\cM,\cY)
  \end{equation*}
  is $\cG$-torsion-free.
  In particular, $\Hom_{\cC}(\cM,\cX)$ is $\cG$-torsion-free for every $\cX\in \cC$.
\end{lemma}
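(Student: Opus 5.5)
We argue directly with elements. Write $\iota\colon\cY\rightarrowtail\cX$ for the monomorphism and regard $\Hom_{\cC}(\cM,\cY)$ as a $B$-submodule of $\Hom_{\cC}(\cM,\cX)$ via composition with $\iota$; the right $B$-action on both is precomposition. Since $\cG$-torsion-freeness means that no nonzero element is annihilated by an ideal in $\cG$, it suffices to show the following. Suppose $g\in\Hom_{\cC}(\cM,\cX)$ and $J\in\cG$ satisfy $gJ\subseteq\Hom_{\cC}(\cM,\cY)$, i.e.\ for each $b\in J$ the composite $g\comp b$ factors through $\iota$. Then $g$ itself factors through $\iota$.

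First, by finite $\cG$-self-generation, choose $f_1,\dotsc,f_r\in J$ such that $f\defeq(f_i)\colon\cM^{\oplus r}\to\cM$ is an epimorphism. By hypothesis, for each $i$ there is $h_i\colon\cM\to\cY$ with $g\comp f_i=\iota\comp h_i$. Assembling these gives $g\comp f=\iota\comp h$, where $h\defeq(h_i)\colon\cM^{\oplus r}\to\cY$.

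The only point needing care is the descent along the epimorphism $f$. Let $p\colon\cX\to\cX/\cY$ be the cokernel of $\iota$. Then $p\comp g\comp f=p\comp\iota\comp h=0$. Since $f$ is an epimorphism, $p\comp g=0$. Hence $g$ factors through $\ker p=\cY$, because $\iota$ is the kernel of its cokernel in an abelian category. So $g\in\Hom_{\cC}(\cM,\cY)$, and the class of $g$ in the quotient is zero. This proves the quotient is $\cG$-torsion-free.

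The final statement follows by taking $\cY=0$.
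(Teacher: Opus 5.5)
Your proof is correct and follows essentially the same route as the paper: choose $f_1,\dotsc,f_r\in J$ giving an epimorphism $f\colon\cM^{\oplus r}\to\cM$, note that $gf$ factors through $\cY$ by the universal property of the coproduct, and then descend along $f$. The only difference is in that last step: the paper descends via images ($\im(g)\cong\im(gf)$, a subobject of $\cY$), while you use the cokernel $p$ of $\iota$ and cancel the epimorphism in $pgf=0$; the two are equivalent, and yours is arguably a bit cleaner.
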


\begin{proof}
  Suppose $g\in\Hom_{\cC}(\cM, \cX)$ 
  such that $g\, J\subseteq\Hom_{\cC}(\cM,\cY)$ for some right ideal $J\in\cG$.
  Since $\cM$ is finitely $\cG$-self-generated,
  there is an epimorphism $f=(f_i)\colon \cM^{\oplus r}\rightarrow \cM$
  for some $f_1,\dotsc, f_r\in J$.
  % where $f=(f_i)$ denotes the canonical morphism induced by $f_i: M \to M \,(i=1, \ldots, r)$.
  % Let $\iota_i\colon\cM\to\cM^{\oplus r}$ be the canonical
  % inclusion of the $i$-th summand, so that $f\iota_i=f_i$.
  % Then the image of the composition $g\comp (f_i)\colon\cM^{\oplus r}\to\cX$
  % is a subobject of $\cY$.
  Consider the morphism $gf =(gf_i)\colon\cM^{\oplus r}\to\cX$.
  % Since $gf_i \in \Hom_{\cC}(\cM,\cY)$ for all $i$,
  % let $gf_i|\colon\cM\to\cY$ denote the corresponding morphism
  % whose composite with $\cY\rightarrowtail\cX$ is $gf_i$.
  Let $gf_i|\colon\cM\to\cY$ denote the morphism
  whose composite with $\cY\rightarrowtail\cX$ is $gf_i$.
  By the universal property of coproducts, $gf$ factors through $\cY\rightarrowtail\cX$,
  as illustrated in the following commutative diagram.
  \begin{equation*}
    \begin{tikzcd}[column sep=large]
      \cM & & \cX\\
      & \cM^{\oplus r} &\\
      \cM & & \cY
      \arrow[from=1-1,]{1-3}[]{g}
      \arrow[from=2-2,]{1-1}[sloped,]{f=(f_i)}
      \arrow[from=2-2,]{1-3}[sloped,]{gf}
      \arrow[from=2-2,]{3-3}[swap,sloped,]{(gf_i|)}
      \arrow[from=3-1,]{1-1}{f_i}
      \arrow[from=3-1,]{2-2}[sloped,]{\textrm{cano.}}
      \arrow[from=3-1,]{3-3}[ ]{gf_i|}
      \arrow[from=3-3,rightarrowtail,]{1-3}[]{}
    \end{tikzcd}
  \end{equation*}
  Thus the image of $gf$ is a subobject of $\cY$. % 这里可以引用[Po] Prop. 2.6.7(a)
  Since $f$ is an epimorphism, 
  \[\im(g)=\ker\bigl(\coker(g)\bigr)\cong\ker\bigl(\coker(gf)\bigr)=\im(gf)\]
  is a subobject of $\cY$.
  Therefore, $g$ factors through $\cY \rightarrowtail \cX$, that is, $g\in\Hom_{\cC}(\cM, \cY)$. Thus, $\Hom_{\cC}(\cM,\cX)/\Hom_{\cC}(\cM,\cY)$ is $\cG$-torsion-free.
\end{proof}

% 小版本的证明
\begin{lemma}\label{lem:morita-i-2}
  Let $\cC$ be an abelian category, $\cM\in\cC$ be a finite generator of $\cC$.
  Suppose that $B=\End_{\cC}(\cM)$ is a right noetherian ring and $\cG$ is a Gabriel topology on $B$ such that $\cM$ is finitely $\cG$-self-generated. 
  If $\cX\in\cC$ such that 
  $\Hom_{\cC}(\cM,\cX)$ is a submodule of some $\cG$-torsion-free $B$-module $Y$, then $Y/\Hom_{\cC}(\cM,\cX)$ is also $\cG$-torsion-free.
\end{lemma}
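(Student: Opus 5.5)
The plan is to show directly that no nonzero element of $Y/\Hom_{\cC}(\cM,\cX)$ is killed by an ideal of $\cG$. So I fix $y\in Y$ and $J\in\cG$ with $yJ\subseteq\Hom_{\cC}(\cM,\cX)$, and aim to prove $y\in\Hom_{\cC}(\cM,\cX)$. Since $\cM$ is finitely $\cG$-self-generated, there are $f_1,\dotsc,f_r\in J$ with $f=(f_i)\colon\cM^{\oplus r}\to\cM$ an epimorphism. Set $g_i\defeq yf_i\in\Hom_{\cC}(\cM,\cX)$ and $g\defeq(g_i)\colon\cM^{\oplus r}\to\cX$.

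The first step is to show that $g$ factors through $f$. Let $\cK\defeq\ker(f)$. Because $\cM$ is a finite generator, there is an epimorphism $p\colon\cM^{\oplus s}\to\cK$. The composite $\cM^{\oplus s}\to\cK\rightarrowtail\cM^{\oplus r}$ is a matrix $(k_{ij})$ with entries in $B$, and it satisfies $\sum_i f_ik_{ij}=0$ for every $j$. Hence
\[
  \textstyle\sum_i g_ik_{ij}=y\sum_i f_ik_{ij}=0,
\]
where the computation takes place in the $B$-module $Y$ and uses that $\Hom_{\cC}(\cM,\cX)\subseteq Y$ is a $B$-submodule. So $g$ vanishes on the image of $p$, which is $\cK$. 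Since $f$ is an epimorphism with kernel $\cK$, it is the cokernel of $\cK\rightarrowtail\cM^{\oplus r}$, and therefore $g=hf$ for a unique $h\in\Hom_{\cC}(\cM,\cX)$. Now put $z\defeq y-h\in Y$. Then $zf_i=0$ for all $i$, and $zJ\subseteq\Hom_{\cC}(\cM,\cX)$ still holds.

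It remains to show $z=0$; this is the main obstacle. The difficulty is that $z$ is only known to be annihilated by the right ideal $\sum_i f_iB$, which need not belong to $\cG$, so the torsion-freeness of $Y$ does not apply directly. The first thing I would try is to use right noetherianity to write $J=b_1B+\dotsb+b_nB$, with each $\phi_j\defeq zb_j\in\Hom_{\cC}(\cM,\cX)$. Next, using the Gabriel axiom that $(J'\!:b)\in\cG$ for $J'\in\cG$ and $b\in B$, I would choose the $f_i$ from a smaller ideal $J'\in\cG$, arranged so that each $b_jf_i$ again lands in the span of the $f$'s. The hope is then to show that $\phi_j\comp f=0$. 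Injectivity of $\Hom_{\cC}(\cM,\cX)\to\Hom_{\cC}(\cM^{\oplus r},\cX)$ (precomposition with the epimorphism $f$) would give $\phi_j=0$, hence $zJ=0$, and the $\cG$-torsion-freeness of $Y$ would then force $z=0$.

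Should this bookkeeping fail, the fallback is to use noetherianity of $B$ to stabilize the ascending chain of submodules
\[
  \Hom_{\cC}(\cM,\cX)\subseteq\Hom_{\cC}(\cM,\cX)+zB\subseteq\dotsb
\]
and then apply \ref{lem:morita-i-1} to the subobject $\im(h)\rightarrowtail\cX$ to control the torsion part. Once $z=0$ we get $y=h\in\Hom_{\cC}(\cM,\cX)$, which proves the lemma.
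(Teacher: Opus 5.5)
Your first step is correct and matches the paper's argument. From $\cM^{\oplus s}\to\cM^{\oplus r}\xrightarrow{f}\cM\to 0$ with $f_i\in J$, the morphism $(yf_i)$ kills the image of $\cM^{\oplus s}$, because $\Hom_{\cC}(\cM,\cX)$ is a $B$-submodule of $Y$. Hence it factors as $h\comp f$ with $h\in\Hom_{\cC}(\cM,\cX)$, and $z=y-h$ satisfies $zf_i=0$.

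The gap is the last step. You correctly note that $z$ is only annihilated by $\sum_i f_iB$, which need not lie in $\cG$, but neither remedy you propose is carried out. The plan via $(J'\!:b)$ needs $b_jf_i\in\sum_k f_kB$ for all $i,j$. That is a condition on the $f$'s which refers back to the $f$'s, and you give no way to arrange it. The fallback via the chain $\Hom_{\cC}(\cM,\cX)+zB$ and \ref{lem:morita-i-1} is not an argument. That lemma concerns quotients $\Hom_{\cC}(\cM,\cX)/\Hom_{\cC}(\cM,\cY)$ for subobjects $\cY\rightarrowtail\cX$, and says nothing about elements of $Y$ lying outside $\Hom_{\cC}(\cM,\cX)$.

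The missing idea is short, and you already have the ingredients. By right noetherianity write $J=b_1B+\dotsb+b_nB$, and adjoin the $b_j$ to the family \emph{before} running the factorization. The morphism $(f_1,\dotsc,f_r,b_1,\dotsc,b_n)\colon\cM^{\oplus(r+n)}\to\cM$ is still an epimorphism, since $(f_i)$ factors through it. All its components lie in $J$, and they now generate $J$ as a right ideal. Your factorization argument then applies verbatim to this enlarged family. It gives $h$ with $(y-h)f_i=0$ and $(y-h)b_j=0$ for all $i,j$, so $(y-h)J=0$. Torsion-freeness of $Y$ then forces $y=h\in\Hom_{\cC}(\cM,\cX)$. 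This is exactly what the paper does: ``after adjoining generators of $J$ if necessary, one may assume that $f_1,\dotsc,f_r$ generate $J$ as a right ideal.''
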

\begin{proof}
  Suppose that the image of $y\in Y$ in $Y/\Hom_{\cC}(\cM,\cX)$ is $\cG$-torsion, that is,
  there is a right ideal $J\in\cG$ such that $y J\subseteq\Hom_{\cC}(\cM,\cX)$.
  It suffices to show $y\in\Hom_{\cC}(\cM,\cX)$.
  Since $\cM$ is finitely $\cG$-self-generated and a finite generator, 
  there exists an exact sequence
  \begin{equation}\label{eq:pf-lem-morita-i-2}
    \begin{tikzcd}
      \cM^{\oplus s} \arrow[r, "g=(g_{ij})"]
      & \cM^{\oplus r} \arrow[r, "f=(f_i)"] & \cM \arrow[r] & 0
    \end{tikzcd}
  \end{equation}
  where $r,s\in\mathbb{N}$, $f_1,\dotsc, f_r\in J$,
  and $g_{ij}\in B$ denotes the component of $g$ from the $j$-th summand of $\cM^{\oplus s}$ to the $i$-th summand of $\cM^{\oplus r}$.
  After adjoining generators of $J$ if necessary,
  one may assume that $f_1, \dotsc, f_r$ generate $J$
  as a right ideal.
  Let $\iota_i\colon\cM\to\cM^{\oplus r}$
  and $\kappa_j\colon\cM\to\cM^{\oplus s}$
  denote the canonical morphisms from the $i$-th and $j$-th summands, respectively, into the corresponding coproducts.
  Let $\Phi\defeq(y\, f_i)\colon \cM^{\oplus r}\to\cX$ be the canonical morphism induced by $y\, f_i\in\Hom_{\cC}(\cM,\cX)$.

  Applying $\Hom_{\cC}(\blank,\cX)$ to \ref{eq:pf-lem-morita-i-2}
  yields the exact sequence
  \begin{equation*}
    \begin{tikzcd}
      0 \arrow[r] & \Hom_{\cC}(\cM,\cX) \arrow[r, "f^*"]
      & \Hom_{\cC}(\cM^{\oplus r},\cX) \arrow[r, "g^*"]
      & \Hom_{\cC}\bigl(\cM^{\oplus s},\cX\bigr).
    \end{tikzcd}
  \end{equation*}
  Since $\Phi\iota_i=y\, f_i$ and $fg=0$, it follows that
  \[
    \Phi g\kappa_j = \sum_{i=1}^r\Phi\iota_i g_{ij}
    =y\Bigl(\sum_{i=1}^r f_ig_{ij}\Bigr)
    = y(fg\kappa_j)=0, \quad\text{for each } j=1,\dotsc, s.
  \]
  Hence, $g^*(\Phi)=\Phi g=0$.
  By exactness, there exists $h\in\Hom_{\cC}(\cM,\cX)$ such that
  $h\comp f=\Phi=(y\, f_i)_{i=1,\dotsc,r}$. Therefore,
  $h\comp f_i=y\cdot f_i$ for $i=1,\dotsc, r$.
  Consequently,
  \begin{equation*}
    (y-h)\, J
    =(y-h)\Bigl(\sum_i f_i B\Bigr)=0,
  \end{equation*}
  which implies that $y-h\in Y$ is $\cG$-torsion.
  Since $Y$ is $\cG$-torsion-free by assumption, it follows that $y=h\in\Hom_{\cC}(\cM,\cX)$,
  as desired.
\end{proof}

% 大版本
% \begin{proposition}\label{prop:always-closed}
%   Let $\cC$ be a Grothendieck category.
%   Suppose $\cM\in\cC$, $B=\End_{\cC}(\cM)$. If $\cG$ is a Gabriel topology on $B$ such that $\cM$ is a $\cG$-generator,
%   then $\Hom_{\cC}(\cM,\cX)$ is $\cG$-closed for any $\cX\in\cC$.
% \end{proposition}

The following proposition provides a characterization of the finitely $\cG$-self-generated property.

\begin{proposition}\label{prop:always-closed}
  Let $\cC$ be an abelian category, $\cM\in\cC$ be a finite generator of $\cC$.
  Suppose that $B=\End_{\cC}(\cM)$ is a right noetherian ring and  $\cG$ is a Gabriel topology on $B$. Then $\cM$ is finitely $\cG$-self-generated if and only if $\Hom_{\cC}(\cM,\cX)$ is $\cG$-closed for any $\cX\in\cC$. In this case, $B$ is $\cG$-closed.
\end{proposition}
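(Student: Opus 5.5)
For the forward direction, assume $\cM$ is finitely $\cG$-self-generated and fix $\cX\in\cC$. Write $H=\Hom_{\cC}(\cM,\cX)$. Torsion-freeness of $H$ is the last assertion of \ref{lem:morita-i-1}. It remains to show $\cG$-injectivity, i.e.\ $\Ext^1_B(B\div J,H)=0$ for every $J\in\cG$. Since $H$ is $\cG$-torsion-free, \ref{lem:tf-4-term-exact-seq} gives a short exact sequence $0\to H\to\omega\pi(H)\to\uR^1\tau(H)\to 0$. Here $\omega\pi(H)=H_{\cG}$ is $\cG$-closed, hence torsion-free, and $\uR^1\tau(H)$ is $\cG$-torsion. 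Applying \ref{lem:morita-i-2} with $Y=H_{\cG}$ shows that $H_{\cG}/H\cong\uR^1\tau(H)$ is $\cG$-torsion-free. Being also torsion, it is zero. Hence $\eta^H$ is an isomorphism, and $H$ is $\cG$-closed. In particular, taking $\cX=\cM$ shows that $B=\Hom_{\cC}(\cM,\cM)$ is $\cG$-closed, which is the final assertion.

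For the converse, assume every $\Hom_{\cC}(\cM,\cX)$ is $\cG$-closed and fix $J\in\cG$. Since $B$ is right noetherian, $J=f_1B+\dotsb+f_rB$ for some $f_i\in J$. Let $f=(f_i)\colon\cM^{\oplus r}\to\cM$ and let $\cX=\coker(f)$ with projection $p\colon\cM\to\cX$. We want to show $\cX=0$.
\begin{itemize}
\item The element $p\in\Hom_{\cC}(\cM,\cX)$ satisfies $pJ=\sum_i pf_iB=0$, because $pf_i=0$ for each $i$.
\item So $p$ lies in $\tau\bigl(\Hom_{\cC}(\cM,\cX)\bigr)$, which is $0$ because $\Hom_{\cC}(\cM,\cX)$ is $\cG$-closed and hence torsion-free.
\item Therefore $p=0$, and since $p$ is an epimorphism, $\cX=0$.
\end{itemize}
Thus $f$ is an epimorphism, and $\cM$ is finitely $\cG$-self-generated. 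This direction uses only torsion-freeness of Hom modules, and it does not use that $\cM$ is a finite generator.

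The main point to check is the application of \ref{lem:morita-i-2} in the forward direction. That lemma needs $H$ to sit inside a $\cG$-torsion-free module $Y$, and $\eta^H\colon H\to H_{\cG}$ is injective precisely because $H$ is torsion-free, by \ref{lem:kernel-eta}. With that in place, the remaining steps are just the identification of the cokernel of $\eta^H$ via \ref{lem:tf-4-term-exact-seq}.
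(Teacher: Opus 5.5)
Your proposal is correct and follows essentially the same route as the paper. The forward direction uses \ref{lem:morita-i-1} for injectivity of $\eta$ and \ref{lem:morita-i-2} to kill its torsion cokernel, and the converse shows that the projection $\cM\to\coker\bigl((f_i)\bigr)$ is a torsion element annihilated by $J$. The only cosmetic difference is that you identify $\coker(\eta)$ with $\uR^1\tau(H)$ via \ref{lem:tf-4-term-exact-seq}, where the paper simply uses that $\coker(\eta)$ is always $\cG$-torsion.
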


\begin{proof}
  For the ``only if" part, note that
  $\eta\colon \Hom_{\cC}(\cM, \cX)\to
  \omega\pi\bigl(\Hom_{\cC}(\cM, \cX)\bigr)$
  has kernel and cokernel that are $\cG$-torsion.
 By
  \ref{lem:morita-i-1}, $\eta$ is injective.
  By \ref{lem:morita-i-2}, $\eta$  is surjective.

  For the “if” part, let $J\in\cG$. 
  Suppose $J$ is generated by $f_1,\dotsc,f_r$ as a right $B$-module.
  Consider the canonical morphism
  $(f_i)\colon \cM^{\oplus r}\to \cM$, and $q\colon\cM\to\cM/\cN$ where $\cN$ denotes the image of $(f_i)$. Then $q\circ f_i=0$ for every $i=1,\dotsc,r$. % 这里的原因是 \im (f_i) = \sum_i \im f_i, 对无限的时候也对, 只需要相应的余积存在
It follows that $qJ=0$ in the $B$-module $\Hom_{\cC}(\cM,\cM/\cN)$, that is,
$q$ is a $\cG$-torsion element of $\Hom_{\cC}(\cM,\cM/\cN)$,
which is however $\cG$-closed and in particular $\cG$-torsion-free by assumption.
Therefore, $q=0$, and it follows that $\cN=\cM$. 
Consequently, $(f_i)\colon\cM^{\oplus r}\to\cM$ is an epimorphism, and thus, $\cM$ is finitely $\cG$-self-generated.
\end{proof}

We are now in a position to characterize when an abelian category is a quotient category of a module category in terms of $\cG$-ampleness; this constitutes the main result of this section.

\begin{theorem}\label{thm:morita-i-finite}
  Let $\cC$ be an abelian category.
  Suppose $\cM\in\cC$ such that $B\defeq\End_{\cC}(\cM)$ is right noetherian, and there exists a Gabriel topology $\cG$ on $B$ such that $\cM$ is $\cG$-ample.
  Then $\cC$ is equivalent to $\qmodG{B}$.
\end{theorem}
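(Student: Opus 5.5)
The plan is to show that $\overline{F}\defeq\pi_{\cG}\comp\Hom_{\cC}(\cM,\blank)\colon\cC\to\QModG B$ is fully faithful and exact, and that its essential image is exactly $\qmodG B$.

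\emph{Step 1: $\overline F$ is exact and lands in $\qmodG B$.} By \ref{lem:A2-exact}(1), $\cG$-projectivity of $\cM$ makes $\overline F$ exact. Since $\cM$ is a finite generator, every $\cX$ admits an epimorphism $\cM^{\oplus r}\to\cX$. Exactness then gives an epimorphism $\pi(B^{\oplus r})\to\overline F(\cX)$. Hence $\overline F(\cX)$ is a quotient of a noetherian object, so it is noetherian, and it lies in $\qmodG B$ by \ref{lem-qmodG consists of noetherian objects}.

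\emph{Step 2: full faithfulness.} By \ref{prop:always-closed}, each $\Hom_{\cC}(\cM,\cX)$ is $\cG$-closed, and $B$ is $\cG$-closed. Therefore
\[
  \Hom_{B/\cG}\bigl(\overline F(\cX),\overline F(\cY)\bigr)\cong\Hom_B\bigl(\Hom_{\cC}(\cM,\cX),\Hom_{\cC}(\cM,\cY)\bigr),
\]
because closed modules $Y$ satisfy $\Hom_{B/\cG}(\pi X,\pi Y)\cong\Hom_B(X,\omega\pi Y)=\Hom_B(X,Y)$. So it suffices to show that $\Hom_{\cC}(\cM,\blank)$ is fully faithful on $\cC$. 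I would argue with finite presentations. Using the finite generator property twice, choose an exact sequence $\cM^{\oplus s}\to\cM^{\oplus r}\to\cX\to 0$. Applying $\overline F$ gives an exact sequence
\[
  \pi(B^s)\to\pi(B^r)\to\overline F(\cX)\to0 .
\]
Then
\[
  \Hom(\overline F\cX,\overline F\cY)=\ker\bigl(\Hom(\pi B^r,\overline F\cY)\to\Hom(\pi B^s,\overline F\cY)\bigr)
  =\ker\bigl(\Hom_{\cC}(\cM,\cY)^r\to\Hom_{\cC}(\cM,\cY)^s\bigr),
\]
where the last identification uses $\Hom_{B/\cG}(\pi B,\overline F\cY)\cong\omega\pi\Hom_{\cC}(\cM,\cY)=\Hom_{\cC}(\cM,\cY)$ by closedness. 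The right-hand kernel is $\Hom_{\cC}(\cX,\cY)$ by left exactness of $\Hom_{\cC}(\blank,\cY)$. One must check that this identification is the map induced by $\overline F$; this follows by naturality, since the maps $B^s\to B^r$ come from the entries $g_{ij}\in B$.

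\emph{Step 3: essential surjectivity onto $\qmodG B$.} Let $\pi(N)$ with $N\in\mod B$ be given, and choose a presentation $B^s\xrightarrow{(g_{ij})}B^r\to N\to0$. The matrix $(g_{ij})$ is a morphism $g\colon\cM^{\oplus s}\to\cM^{\oplus r}$ in $\cC$; set $\cX\defeq\coker g$. Since $\overline F$ is exact and $\overline F(\cM^{\oplus n})=\pi(B^n)$ compatibly with $g$, we obtain $\overline F(\cX)\cong\coker\pi(g)\cong\pi(N)$.

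The main obstacle I expect is Step 2, namely verifying that the Yoneda-type identification is really induced by $\overline F$ and handling the identification $\Hom_{B/\cG}(\pi B,\pi Z)=\omega\pi Z$ for closed $Z$. The remaining steps are formal consequences of \ref{prop:always-closed} and \ref{lem:A2-exact}.
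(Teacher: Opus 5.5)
Your proof is correct. Steps 1 and 2 follow the paper's argument essentially verbatim: exactness comes from \ref{lem:A2-exact}, closedness of $\Hom_{\cC}(\cM,\cY)$ from \ref{prop:always-closed}, and full faithfulness from comparing the two left-exact Hom sequences attached to a presentation $\cM^{\oplus s}\to\cM^{\oplus r}\to\cX\to 0$. The naturality you flag as the main obstacle is exactly what the paper's commutative diagram records, and it causes no trouble.

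The one genuine difference is in essential surjectivity. The paper first proves an intermediate projectivization equivalence $\add(\cM)\simeq\add(\pi(B))=\pi(\proj B)$ by lifting idempotents through the fully faithful functor $\overline F$. It then lifts a projective presentation $P_1\to P_0\to Y\to 0$.

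You instead take a free presentation $B^{s}\to B^{r}\to N\to 0$, which exists because $B$ is right noetherian. You then observe that a right $B$-linear map $B^{s}\to B^{r}$ is left multiplication by an $r\times s$ matrix over $B=\End_{\cC}(\cM)$. It is therefore literally $\Hom_{\cC}(\cM,g)$ for the morphism $g\colon\cM^{\oplus s}\to\cM^{\oplus r}$ with those entries.

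This makes the paper's Step 2 unnecessary, and it does not even use full faithfulness of $\overline F$ at that point. What the paper's detour buys is the by-product identification of $\add(\cM)$ with $\pi(\proj B)$, an analogue of Auslander's projectivization. That identification is of independent interest but is not needed for the theorem.
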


\begin{proof}%[Proof of \ref{thm:morita-i}]
  By \ref{prop:always-closed}, $B$ is $\cG$-closed. 
  It follows that $\End_{B/\cG}(\pi(B))\cong B=\End_{\cC}(\cM)$ is right noetherian.
  By \ref{lem-qmodG consists of noetherian objects}, $\qmodG B$ is the noetherian full subcategory of $\QModG B$. 
  It remains to prove $\cC$ is equivalent to $\qmodG B$.

  Let $\overline{F}=\pi\comp\Hom_{\cC}(\cM,\blank)$.
  Then $\overline{F}$ is exact by \ref{lem:A2-exact} and the assumption.
  Since $\cM$ is a finite generator,
  for every $\cX\in\cC$,
  there exists an epimorphism $f\colon\cM^{\oplus r}\to\cX$,
  and thus $\overline{F}(f)\colon \pi(B)^{\oplus r}\to\overline{F}(\cX)$
  is epic.
  Recall that $\qmodG B$ consists of noetherian objects of $\QModG B$.
  It follows that $\overline{F}(\cX)\in\qmodG{B}$.
  We are going to show that
  $\overline{F}\colon \cC\to\qmodG B$ is fully faithful and essentially surjective.
  The proof is divided into three steps.

  \textit{Step 1: $\overline{F}$ is fully faithful.}\nopagebreak\par
  Recall that $\Hom_{\cC}(\cM, \cY)$ is $\cG$-closed for any $\cY\in\cC$ by
  \ref{prop:always-closed}. Then
  there are natural isomorphisms
  \begin{equation*}
    \begin{aligned}
      \Hom_{\cC}(\cM,\cY)
      &\cong \Hom_B\bigl(\Hom_{\cC}(\cM,\cM),\Hom_{\cC}(\cM,\cY)\bigr)\\
      &\cong \Hom_{\QModG B}\bigl(\pi\bigl(\Hom_{\cC}(\cM,\cM)\bigr),
        \pi\bigl(\Hom_{\cC}(\cM,\cY)\bigr)\bigr)\\
      &= \Hom_{\QModG B}\bigl(\overline{F}(\cM),\overline{F}(\cY)\bigr).
    \end{aligned}
  \end{equation*}
  The isomorphism is in fact given by $f \mapsto \overline{F}(f)$.
  Now, for any $\cX\in\cC$, there is a presentation
  $
    \cM^{\oplus s}\to\cM^{\oplus r}\to\cX\to 0
  $
  as $\cM$ is a finite generator in $\cC$.
  Since $\overline{F}$ is exact,
  one obtains the following exact commutative diagram.
  \begin{equation*}
    \begin{tikzcd}[cramped,column sep=1em]
      0 & \Hom_{\cC}(\cX,\cY)
      & \Hom_{\cC}(\cM^{\oplus r},\cY)
      & \Hom_{\cC}(\cM^{\oplus s},\cY)\\
      0 & \Hom_{\QModG B}\bigl(\overline{F}(\cX),\overline{F}(\cY)\bigr)
      & \Hom_{\QModG B}\bigl(\overline{F}(\cM^{\oplus r}),\overline{F}(\cY)\bigr)
      & \Hom_{\QModG B}\bigl(\overline{F}(\cM^{\oplus s}),\overline{F}(\cY)\bigr)
      \arrow[from=1-1,]{1-2}[]{}
      \arrow[from=1-2,]{1-3}[]{}
      \arrow[from=1-3,]{1-4}[]{}
      \arrow[from=2-1,]{2-2}[]{}
      \arrow[from=2-2,]{2-3}[]{}
      \arrow[from=2-3,]{2-4}[]{}
      \arrow[from=1-2,]{2-2}[]{\overline{F}}
      \arrow[from=1-3,]{2-3}[]{\cong}
      \arrow[from=1-4,]{2-4}[]{\cong}
    \end{tikzcd}
  \end{equation*}
  It follows that
  $\overline{F}\colon\Hom_{\cC}(\cX,\cY)\to\Hom_{\QModG B}\bigl(\overline{F}(\cX),\overline{F}(\cY)\bigr)$
  is an isomorphism.

  \textit{Step 2: $\overline{F}$ gives an equivalence: $\add(\cM)\simeq\add(\pi(B))=\pi(\proj(B))$.}%
  \nopagebreak\par
  This is analogous to the projectivization equivalence
  \cite[Prop.\@ 2.7]{auslanderRepresentationTheoryArtin1974}.
  Recall that $\add(\cM)$ denotes the full subcategory of $\cC$
  consisting of all direct summands of finite coproducts of $\cM$,
  whereas $\proj(B)$ is the full subcategory of $\mod{B}$
  consisting of all finitely generated projective modules.
  Clearly, $\overline{F}$ restricts to a functor $\add(\cM)\to \add(\pi(B))$, which indeed yields an equivalence of categories as  explained below. Since $\overline{F}$ is fully faithful, it suffices to find
  some $\cN\in\add(\cM)$ such that $\overline{F}(\cN)\cong \pi(P)$ for any $P\in\proj(B)$.
  For such a $P$, there is an idempotent $e\in \End(B^{\oplus r})$
  such that $P\cong \ker(e)$. Then
  $\pi(e)=\pi(e)^2 \in \End({\pi(B)}^{\oplus r})$.
  Since $\overline{F}$ is fully faithful and preserves finite coproducts,
  there is an idempotent $f\in\End(\cM^{\oplus r})$
  such that $\overline{F}(f)=\pi(e)$.
  Let $\cN\defeq\ker(f)$.
  It follows from the exactness of $\overline{F}$ that $\overline{F}(\cN)\cong\pi(P)$.

  \textit{Step 3: $\overline{F}\colon \cC \to \qmodG{B}$ is essentially surjective.} 
  \nopagebreak\par
  Suppose $Y\in\mod B$.
  Let $P_1 \xrightarrow{\partial} P_0 \to Y \to 0$
  be a finite $B$-projective presentation of $Y$.
  By Step 2,
  there is a morphism $f\colon\cM_1\to\cM_0$ in $\add(\cM)$
  such that $\pi(\partial)=\overline{F}(f)$.
  Let $\cX=\coker(f)$. As $\overline{F}$ is right exact,
  one obtains the following commutative diagram with exact rows,
  \begin{equation*}
    \begin{tikzcd}
      \overline{F}(\cM_1) & \overline{F}(\cM_0) & \overline{F}(\cX) & 0\\
      \pi(P_1) & \pi(P_0) & \pi(Y) & 0
      \arrow[from=1-1,]{1-2}[]{\overline{F}(f)}
      \arrow[from=1-2,]{1-3}[]{}
      \arrow[from=1-3,]{1-4}[]{}
      \arrow[from=2-1,]{2-2}[]{\pi(\partial)}
      \arrow[from=2-2,]{2-3}[]{}
      \arrow[from=2-3,]{2-4}[]{}
      \arrow[from=1-1,]{2-1}[]{\cong}
      \arrow[from=1-2,]{2-2}[]{\cong}
      \arrow[from=1-3,dashed,]{2-3}[]{}
    \end{tikzcd}
  \end{equation*}
  which implies that $\overline{F}(\cX)\cong\pi(Y)$.
\end{proof}

% 在这里提一下大版本我们也有
\begin{remark}\label{rem:pg}
  Let $\cC$ be an abelian category. Suppose that $\cM\in\cC$ is a finite generator and $B\defeq\End_{\cC}(\cM)$ is right noetherian.
  Then $\cC$ is essentially small.
  Therefore the indization of $\cC$, denoted by $\Ind(\cC)$, is a Grothendieck category, and $\cM$ is a generator of $\Ind(\cC)$ (\cite[Sect.\@ 8.6]{kashiwaraCategoriesSheaves2006}).
  We treat $\cC$ as a full abelian subcategory of $\Ind(\cC)$ via the canonical exact fully faithful functor $\cC\to\Ind(\cC)$.
  By the Popescu--Gabriel theorem \cite{popescuCaracterisationCategoriesAbeliennes1964},
  there is a strongest Gabriel topology $\cG_{\text{str}}$ on $B$ such that $\Hom_{\Ind(\cC)}(\cM, \cX)$ is $\cG_{\text{str}}$-closed for all $\cX\in\Ind(\cC)$,
  and the functor $\pi\comp\Hom_{\Ind(\cC)}(\cM,\blank)$ defines an equivalence $\Ind(\cC)\simeq\QModG[\text{str}]{B}$.
  Under this equivalence, $\cM$ corresponds to $\pi(B)$.
  Since $\pi(B)\in\QModG[\text{str}]{B}$ is noetherian, so is $\cM$ in $\cC$.
  It follows that $\cC$ is a noetherian category because $\cM$ is a finite generator. Therefore, $\cC$ is the full subcategory of $\Ind(\cC)$ consisting of noetherian objects,
  and $\pi\comp\Hom_{\cC}(\cM,\blank)$ defines an equivalence $\cC\simeq\qmodG[\text{str}]{B}$.
  Since $B$ is $\cG_{\text{str}}$-closed, \ref{prop:pi-A-ample} shows that $\cM\in\cC$ is $\cG_{\text{str}}$-ample.
  Conversely, if $\cG$ is a Gabriel topology on $B$ such that $\cM$ is $\cG$-ample, then $\cG=\cG_{\text{str}}$.

  However, determining whether a given Gabriel topology $\cG$ on $B$ coincides with $\cG_{\text{str}}$ can be highly challenging. By
  \ref{thm:morita-i-finite}, it suffices to verify that $\cM$ is finitely $\cG$-self-generated and $\cG$-projective.
  For example, in the graded setting, ampleness conditions are verified and used to establish equivalences with quotient categories in \cite[Lem.\@ 3.5 and Cor.\@ 3.6]{minamotoAmplenessTwosidedTilting2012}, \cite[Thm.\@ 2.5]{moriAmpleGroupAction2016} and \cite[Thm.\@ 7.9]{liNoncommutativeResolutionsNoncommutative2026}.
\end{remark}

\iffalse
When $B$ is right noetherian, $\QModG B$ is locally noetherian
% 引用的是locally noetherian范畴的定义: 有一族noetherian生成子的Grothendieck范畴
(\cite[Sect.\@ 5.8]{popescuAbelianCategoriesApplications1973}),
and $\qmodG B$ is its full subcategory of noetherian objects (\ref{lem-qmodG consists of noetherian objects}).
Since a locally noetherian category is determined up to equivalence by its full subcategory of noetherian objects,
\ref{thm:morita-i-finite} also has a Grothendieck version,
with $\cG$-ampleness replaced by full $\cG$-ampleness and $\qmodG B$ by $\QModG B$.
\fi

% 这边写分次 qgr情况的推论, 解释和AZ的关系与区别

% 这边给一个类似于AZ定理的版本
% 我们直接说没介绍的记号参考AZ

We now return to the graded setting that originally motivated
the definition of ampleness.
We provide a characterization of $\qgr(B)$
for a graded ring $B$,
making use of a graded version of \ref{thm:morita-i-finite}
by fixing a specific graded Gabriel topology.
This is analogous to
Artin--Zhang's theorem \cite[Thm.\@ 4.5]{artinNoncommutativeProjectiveSchemes1994}.
% The reader is referred to \cite{artinNoncommutativeProjectiveSchemes1994}
% for any undefined concept.
In particular, comparing with \cite[Thm.\@ 4.5]{artinNoncommutativeProjectiveSchemes1994}, we require an extra condition (H2) in \ref{cor:AZ-thm}, which naturally arises in the study of noncommutative resolutions of noncommutative isolated singularities.
In \cite[Thm.\@ 3.9]{liNoncommutativeResolutionsASGorenstein2026},
the authors also characterize $\qgr (B)$ under the hypothesis (H2) in \ref{cor:AZ-thm}, where the original noncommutative ampleness (\ref{def:AZ-ample}) is used, resulting in the $\chi_1$-condition as mentioned before.
 % 加了这一句话回顾一下我们的ample和AZ ample的关系
By \ref{lem:AZ-equiv-cond}, condition (H1a)
in \ref{cor:AZ-thm} is equivalent to condition \ref{enum:AZ-ample-1}.
% For graded Gabriel topologies, we refer the reader to
% \cite[Sects.\@ II.9 and II.10]{nastasescuGradedRingTheory1982}.

We briefly recall the notation in
\cite{artinNoncommutativeProjectiveSchemes1994,liNoncommutativeResolutionsASGorenstein2026}.
Let $B$ be a right noetherian $\ZZ$-graded ring, and let $\cT$ denote the category of  graded $B$-modules $N$ such that every element is annihilated by some $B_{\geq n}$.
% An element $y\in N$ is said to be \dfn{torsion} if $y\cdot B_{\geq n}=0$ for some $n\in\ZZ$.
% The module $N$ is called \dfn{torsion} if every element $x\in N$ is torsion.
As a full subcategory of the category of graded $B$-modules, $\cT$ is closed under subobjects, quotient objects, extensions, coproducts, and shifts.
Hence, $\cT$ corresponds to a graded Gabriel topology $\cG$ on $B$ such that $\cT$ is precisely the category of $\cG$-torsion modules (\cite{nastasescuGradedRingTheory1982}).
Then, $\qgr (B)$ is defined as the quotient category of finitely generated graded $B$-modules modulo the $\cG$-torsion modules.

When $B$ is additionally bounded-below,
$\cG$ has a basis consisting of the graded two-sided ideals $\{BB_{\geq n}B\mid n\in\ZZ\}$.
Let $l$ denote the lower bound of $B$.
Then, for any $n\in \ZZ$, one has $ BB_{\geq n-2l}B \subseteq B_{\geq n}$.
It follows that $\{B_{\geq n}\mid n\in\mathbb{Z}\}$ is cofinal to $\cG$ with respect to reverse inclusion.
If $B$ is moreover locally finite, then, by
\cite[Lem.\@ 2.6]{liNoncommutativeResolutionsASGorenstein2026}, $\cG$ has a basis $\{J^n\mid n\in \NN\}$,
where $J$ denotes the graded Jacobson radical of $B$.
In this case, the $\cG$-depth coincides with the classical notion of depth (\cite[p.\@ 8]{liNoncommutativeResolutionsASGorenstein2026}).

\begin{corollary}\label{cor:AZ-thm}
  Let $\cC$ be an abelian category, let
  $\cM\in\cC$, and $s$ be an autoequivalence of $\cC$.
  Suppose that $B\defeq\gHom_{\cC}(\cM, \cM)$ is a right noetherian ring, and let $\cG$ denote the graded Gabriel topology on $B$ defined above.
  Assume that the following conditions hold for the triple $(\cC,\cM,s)$.
  \begin{enumerate}[label=(H\arabic*)]
    \item $\cM$ is graded $\cG$-ample, that is
    \begin{enumerate}
      \item[(a)] $\cM$ is a finite graded generator of $(\cC, s)$,
      and for any $n \in \ZZ$,
      there are some positive integers $l_1,l_2,\dotsc,l_p$
      and an epimorphism $\bigoplus_{i=1}^ps^{-l_i}(\cM)\to s^n(\cM)$ in $\cC$.
      \item[(b)] For any epimorphism $\cX\to\cY$ in $\cC$,
        \[\coker\bigl(\gHom_{\cC}(\cM,\cX)\to
        \gHom_{\cC}(\cM,\cY)\bigr)\]
        is a $\cG$-torsion graded $B$-module.
    \end{enumerate}
    \item $\Hom_{\cC}(\cM, s^{-n}(\cM))=0$ for sufficiently large $n$.
  \end{enumerate}
  Then $\depth^{\cG}(B)\geq 2$,
  and there is an equivalence $F\colon \cC\simeq \qgr (B)$ such that $F(\cM)=\pi(B)$ and $F\comp s = (1)\comp F$, which is denoted by
   \[F\colon (\cC,\cM,s) \simeq (\qgr(B),\pi(B),(1)).\]

  Conversely, for a right noetherian bounded-below $\ZZ$-graded ring $B$ with $\depth^{\cG}(B) \geq 2$, the triple $(\qgr(B),\pi(B),(1))$ satisfies conditions {\rm(H1)} and {\rm(H2)}.
\end{corollary}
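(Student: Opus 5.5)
The plan is to run the proof of \ref{thm:morita-i-finite} verbatim in the graded category, taking $\Mod B$ to be the category of graded right $B$-modules with graded Gabriel topology $\cG$, and replacing $\Hom_{\cC}(\cM,\blank)$ by $\gHom_{\cC}(\cM,\blank)=\bigoplus_n\Hom_{\cC}(\cM,s^n(\blank))$. Free modules $B^{\oplus r}$ are replaced by finite sums of shifts $\bigoplus_i B(l_i)$, corresponding to $\bigoplus_i s^{l_i}(\cM)$, and $\cM^{\oplus r}$ by finite sums $\bigoplus s^{-l_i}(\cM)$. The first point to settle is what graded finite $\cG$-self-generation should mean, namely: for every graded $J\in\cG$ there are homogeneous $f_i\in J$ with $(f_i)\colon\bigoplus s^{-\deg f_i}(\cM)\to\cM$ epic. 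Since $\{B_{\geq n}\}$ is cofinal in $\cG$ (B is bounded below; see below), this reduces to (H1a) via the equivalence (3)$\Leftrightarrow$(2) of \ref{lem:AZ-equiv-cond}. The hypothesis of that lemma, that $B$ is bounded below, is exactly where (H2) enters: (H2) gives $B_{-n}=\Hom_{\cC}(\cM,s^{-n}\cM)=0$ for $n\gg0$. Boundedness then also gives the cofinality of $\{B_{\geq n}\}$ and the two-sided basis $\{BB_{\geq n}B\}$ recalled just before the corollary. Condition (H1b) is the graded $\cG$-projectivity.

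With these in hand, I would prove graded analogues of \ref{lem:morita-i-1}, \ref{lem:morita-i-2} and \ref{prop:always-closed}. The arguments are unchanged, using homogeneous elements and shifted coproducts. The generators $f_1,\dots,f_r$ of $J$ in \ref{lem:morita-i-2} should be chosen homogeneous, which is possible since $B$ is right noetherian and $J$ is graded. The output is that every $\gHom_{\cC}(\cM,\cX)$ is $\cG$-closed; in particular $B$ is, so $\depth^{\cG}(B)\geq 2$. Next, $\overline F=\pi\comp\gHom_{\cC}(\cM,\blank)$ is exact by the graded \ref{lem:A2-exact}. By construction it intertwines $s$ with the shift, since $\gHom(\cM,s\cX)=\gHom(\cM,\cX)(1)$, and it sends $\cM$ to $\pi(B)$. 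Full faithfulness (Step 1) uses $\Hom_{\cC}(s^{-l}\cM,\cY)\cong\Hom_{\mathrm{gr}}(B(-l),\gHom(\cM,\cY))$ together with closedness. Steps 2 and 3 go through using graded projectives, which are summands of $\bigoplus B(l_i)$, and the noetherian objects of the graded quotient category, which form $\qgr(B)$.

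For the converse, take $B$ right noetherian, bounded below, with $\depth^{\cG}B\geq2$, so $B$ is $\cG$-closed and $\gHom_{\qgr}(\pi B,\pi B)\cong B$. Then (H2) holds because $\Hom_{\qgr}(\pi B,\pi B(-n))=(\omega\pi B)_{-n}=B_{-n}=0$ for $n\gg0$. Condition (H1b) follows, as in \ref{prop:pi-A-ample}, from $\pi\gHom(\pi B,\blank)\cong\id$ being exact. For (H1a), finite graded generation is clear, and epimorphisms $\bigoplus \pi B(-l_i)\to\pi B(n)$ with $l_i>0$ come from the fact that $B_{\geq m}(n)\subseteq B(n)$ has torsion cokernel and is finitely generated by homogeneous elements of degree $\geq m$; taking $m>n$ makes the shifts positive, as in the proof of \ref{prop:pi-A-ample}.

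The main obstacle I expect is bookkeeping in the graded version of \ref{lem:morita-i-2} and of \ref{prop:always-closed}: one has to make sure that the graded self-generation derived from (H1a) really holds for every graded $J\in\cG$, and not only for the $B_{\geq n}$. My first attempt would be to use cofinality: any $J\in\cG$ contains some $BB_{\geq m}B\supseteq B_{\geq m'}$. Then apply (H1a) to $s^{n}$ for suitable $n$, in the form (2) of \ref{lem:AZ-equiv-cond} with $\cX=\cM$, to land inside $B_{\geq m'}\subseteq J$.
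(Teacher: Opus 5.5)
Your proposal is correct and follows the paper's proof. The paper invokes graded versions of \ref{prop:always-closed} and \ref{thm:morita-i-finite} for the forward direction, and a graded version of \ref{prop:pi-A-ample} (with the $\cG$-closedness of $B$ giving (H2)) for the converse; your observation that (H2) says exactly $B_{-n}=0$ for $n\gg0$, so that $\{BB_{\geq n}B\}$ is a basis of $\cG$ and (H1a) yields self-generation for every $J\in\cG$, makes explicit what the paper leaves implicit.

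One small fix: for a $\ZZ$-graded $B$ with nonzero negative part, $B_{\geq m}$ need not be a right ideal. In the converse, use the right ideal $B_{\geq m}B$ instead; it is still finitely generated by homogeneous elements of degree $\geq m$, so the argument is unchanged.
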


\begin{proof}
  Assume that (H1) and (H2) hold for $(\cC,\cM,s)$. 
  % H1就是分次版本的G-ample
  Graded versions of \ref{prop:always-closed} and \ref{thm:morita-i-finite}
  then imply that $B$ is $\cG$-closed
  and that the canonical functor defines an equivalence
  \[\pi\comp\gHom_{\cC}(\cM, \blank)\colon (\cC,\cM,s) \simeq (\qgr(B),\pi(B),(1)).
  \]

  Conversely, suppose that $B$ is a right noetherian bounded-below $\ZZ$-graded ring with $\depth^{\cG}(B) \geq 2$.
  It follows from $\depth^{\cG}(B) \geq 2$ that $B$ is graded $\cG$-closed and thus (H2) holds. Moreover, by a graded version of \ref{prop:pi-A-ample}, $(\qgr(B),\pi(B),(1))$ satisfies (H1)(a) and (H1)(b), that is, $\pi(B)$ is graded $\cG$-ample.
\end{proof}

\section{Quotient equivalences and Morita contexts}
\label{Quotient Equivalences and Morita Contexts}
\label{sec:morita}

In this section, we investigate the conditions under which a Morita context induces an equivalence between quotient categories. We characterize such equivalences and develop a Morita-type theory for quotient categories.

\subsection{Morita contexts and quotient categories}

We begin by recalling the definition of Morita contexts.
% \cite[Def.\@ I.3.2]{bassAlgebraicKtheory1968}
\begin{definition}[{{\cite[Def.\@ 3.11]{jacBasicAlgebraII1989}}}]
  A \emph{Morita context} is a six-tuple $(A, B, M, N, \xi, \zeta)$
  consisting of two rings $A$ and $B$,
  two bimodules ${}_BM_A$ and ${}_AN_B$,
  and two bimodule morphisms $\xi\colon N\otimes_B M\to A$ and     $\zeta\colon M\otimes_A N\to B$
  satisfying the associativity conditions
  \[[m',n]m=m'(n,m) \text{\quad and\quad} n[m,n']=(n,m)n'\]
  % $\zeta(m'\otimes n)m=m'\xi(n\otimes m)$ and $n\zeta(m\otimes n')=\xi(n\otimes m)n'$
  for all $m,m'\in M$ and $n,n'\in N$,
  where $(n, m)\defeq\xi(n\otimes m)$ and $[m,n]\defeq\zeta(m\otimes n)$.
\end{definition}

Bimodules whose tensor functors induce functors between the corresponding quotient categories are frequently used; accordingly, we introduce the following terminology.

\begin{definition}\label{def:compatible}
  Let $A$ and $B$ be rings, $\cF$ and $\cG$ be Gabriel topologies on $A$ and $B$, respectively. A bimodule ${}_BM_A$ is said to be \emph{compatible} with $\cF$ and $\cG$ if both $\blank\otimes_B M$ and $\Tor_1^B(\blank, M)$
  send $\cG$-torsion modules to $\cF$-torsion modules.
\end{definition}

By \ref{prop:induced-functors},
${}_BM_A$ is compatible with the Gabriel topologies $\cF$ and $\cG$ if and only if
$\blank\otimes_B M$ induces a functor from 
$\QModG B$ to $\QModF A$ (see \ref{def:induced-functors}), 
which is denoted by 
\[\blank\otimes_{\QModG B}\pi(M)\colon \QModG B\to\QModF A.\]

Let $(A, B, M, N, \xi, \zeta)$ be a Morita context, and let $\cF$ and $\cG$ be Gabriel topologies on $A$ and $B$, respectively, with which $M$ and $N$ are compatible. 
If $\overline F=\blank\otimes_{\QModF A} \pi(N)$ and $\overline G=\blank\otimes_{\QModG B} \pi(M)$ 
are mutually quasi-inverse 
via the natural isomorphisms
$\overline G \, \overline F\cong \id_{\QModF A}$ and $\overline F\, \overline G\cong \id_{\QModG B}$ induced by $\pi(\blank \otimes \xi)$ and $\pi(\blank \otimes \zeta)$,
then the equivalence 
is called \textit{induced by the Morita context} $(A, B, M, N, \xi, \zeta)$.

The following lemma is useful for establishing equivalences between quotient categories induced by a Morita context.

\begin{lemma}
  \label{lem:context-ker-torsion}
  Let $(A,B,M,N,\xi,\zeta)$ be a Morita context.
  Suppose that $\cF$ and $\cG$ are Gabriel topologies on $A$ and $B$, respectively.
  If $\coker(\xi)$ is $\cF$-torsion, then 
  \begin{enumerate}[ref=\ref{lem:context-ker-torsion}(\arabic*)]
    \item\label{enum:ker-torsion} {\normalfont(\cite[Lem.\@ 2.12]{liNoncommutativeResolutionsASGorenstein2026})}
   $\ker(\xi)$ is $\cF$-torsion;
    \item\label{enum:tor-torsion} $\Tor_i^B(Y, M)$ is $\cF$-torsion for any $B$-module $Y$ and any $i\geq 1$.
    \item The kernel and cokernel of the canonical ring morphism
    $\phi \colon A\to \End_B(N)$ are both $\cF$-torsion.
  \end{enumerate}
\end{lemma}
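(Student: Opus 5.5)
The idea is to work throughout with the two-sided ideal $I\defeq\im(\xi)\subseteq A$, which is additively spanned by the elements $(n,m)$ with $n\in N$ and $m\in M$. We first show that $I\in\cF$. Since $A/I\cong\coker(\xi)$ is $\cF$-torsion, the element $1+I$ is annihilated by some $J\in\cF$, so $J\subseteq I$. A Gabriel topology is closed under taking larger right ideals, hence $I\in\cF$. Consequently any right $A$-module $T$ with $TI=0$ is $\cF$-torsion, because every element of $T$ is annihilated by $I\in\cF$. Both (2) and (3) will be reduced to checking that a suitable module is annihilated by $I$. Part (1) is quoted from \cite[Lem.\@ 2.12]{liNoncommutativeResolutionsASGorenstein2026}; it can also be seen directly from the associativity relations in the same spirit.

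For (2), fix $n\in N$ and $m\in M$. By associativity, right multiplication by $(n,m)$ on $M$ is given by
\[m'\mapsto m'(n,m)=[m',n]m.\]
So this map is the composite of $M\to B$, $m'\mapsto[m',n]$, with $B\to M$, $b\mapsto bm$. Both maps are left $B$-module morphisms; the first is $B$-linear because $\zeta$ is a bimodule map. The right $A$-action on $\Tor_i^B(Y,M)$ is induced functorially from the right action on $M$. Hence multiplication by $(n,m)$ on $\Tor_i^B(Y,M)$ factors through $\Tor_i^B(Y,B)$, which vanishes for $i\geq1$. Therefore $\Tor_i^B(Y,M)\,I=0$, and the first paragraph gives that $\Tor_i^B(Y,M)$ is $\cF$-torsion.

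For (3), recall that $\phi(a)$ is left multiplication by $a$ on $N$, and that $\End_B(N)$ is a right $A$-module via $(fa)(x)=f(ax)$. We check the kernel and the cokernel separately.
\begin{enumerate}
  \item If $a\in\ker\phi$, then $aN=0$, so $a(n,m)=(an,m)=0$ for all $n,m$. Thus $(\ker\phi)I=0$, and $\ker\phi$ is $\cF$-torsion.
  \item For $f\in\End_B(N)$ and $n'\in N$, associativity and $B$-linearity of $f$ give
  \[(f\cdot(n,m))(n')=f\bigl((n,m)n'\bigr)=f\bigl(n[m,n']\bigr)=f(n)[m,n']=(f(n),m)\,n'.\]
  Hence $f\cdot(n,m)=\phi\bigl((f(n),m)\bigr)\in\im\phi$. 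So $\coker(\phi)$ is annihilated by $I$, and is therefore $\cF$-torsion.
\end{enumerate}

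The only step that needs care is in (2): one must verify that the endomorphism $m'\mapsto m'(n,m)$ of $M$ genuinely factors through $B$ as a left $B$-module map, so that functoriality of $\Tor_i^B(Y,\blank)$ applies. This is exactly what the associativity axiom $[m',n]m=m'(n,m)$ and the bimodule property of $\zeta$ provide. Everything else is bookkeeping with the ideal $I\in\cF$.
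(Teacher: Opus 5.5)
Your proof is correct and matches the paper's argument step for step: the paper also takes $I=\im(\xi)\in\cF$ and shows each module is annihilated by $I$. In (2) it factors right multiplication by $\sum_j (n_j,m_j)$ through $B^{\oplus r}$, where you factor each generator $(n,m)$ through $B$, and in (3) it uses the same identity $f\cdot(n,m)=\phi\bigl((f(n),m)\bigr)$. The only difference is that the paper proves (1) directly instead of citing it, by the one-line check $\bigl(\sum_j y_j\otimes x_j\bigr)(n,m)=\sum_j y_j\otimes[x_j,n]m=\sum_j y_j[x_j,n]\otimes m=\bigl(\sum_j (y_j,x_j)\bigr)n\otimes m=0$ for $\sum_j y_j\otimes x_j\in\ker(\xi)$.
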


\begin{proof} Let $I=\im(\xi)$. Then $I \in \cF$, since $A/I=\coker(\xi)$ is $\cF$-torsion.

 (1) Suppose that $\sum_j y_j \otimes x_j \in \ker(\xi)$, that is, $\sum_j (y_j, x_j)=0$. Now, for any $a \in I$ with $a=\sum_i (n_i, m_i)$,
 \[\Bigl(\sum_j y_j \otimes x_j\Bigr)a = \Bigl(\sum_j y_j \otimes x_j\Bigr) \Bigl(\sum_i (n_i, m_i)\Bigr) =  \sum_{i,j} y_j \otimes [x_j, n_i]m_i =\sum_{i,j} (y_j, x_j)n_i \otimes m_i =0.\]
 It follows that $\ker(\xi)$ is $\cF$-torsion.

 (2) In fact, the stronger statement $\Tor_i^B(Y, M) I = 0$ holds, thereby completing the proof.
 Let $a=\sum_{j=1}^r (n_j, m_j) \in I$. Then $r_a\colon M \to M, x \mapsto xa$ is a morphism of left $B$-modules. Consider the morphisms of left $B$-modules
 \[\alpha\colon M \to B^{\oplus r}, x \mapsto ([x,n_1], \cdots, [x, n_r]) \textrm{ and } \beta\colon B^{\oplus r} \to M, (b_1, \cdots, b_r) \mapsto \sum_j b_j m_j.\]
 It follows from $xa=\sum_j x (n_j, m_j) = \sum_j [x, n_j]m_j$ that $r_a= \beta \alpha$. 
 
Clearly, $\Tor_i^B(Y, B^{\oplus r}) = 0$, and hence $\Tor_i^B(Y, \alpha)=0$. It follows that 
\[\Tor_i^B(Y, r_a) = \Tor_i^B(Y, \beta)\circ \Tor_i^B(Y, \alpha)=0.\]
 Therefore, $\Tor_i^B(Y, M)a= \Tor_i^B(Y, r_a)(\Tor_i^B(Y, M)) = 0.$
 Hence, $\Tor_i^B(Y, M)I=0$, and  $\Tor_i^B(Y, M)$ is $\cF$-torsion.

  (3)
  For any $a\in\ker(\phi)$ and any $\sum_i (n_i, m_i) \in \im(\xi)$,
  $a(\sum_i(n_i, m_i))=\sum_i(a n_i, m_i)=0$.
  It follows that $\ker(\phi)$ is $\cF$-torsion.
  
  On the other hand, let $f\in\End_B(N)$ and $\sum_i(n_i, m_i) \in \im(\xi)$. Then, for any $n\in N$, 
  \begin{align*}
    &\phantom{{}={}}\Bigl(f\cdot\sum_i(n_i, m_i)\Bigr)(n)=f\Bigl(\sum_i(n_i, m_i) n \Bigr)
    =\sum_i f\bigl(n_i[m_i, n]\bigr)\\
    &=\sum_i f(n_i)[m_i, n]
    =\sum_i (f(n_i), m_i) n = an = \phi(a)(n),
  \end{align*}
  where $a=\sum_i (f(n_i), m_i) \in A$.
  Hence $f\cdot\sum_i(n_i, m_i)=\phi(a)\in\im(\phi)$.
  It follows that $\coker(\phi)$ is $\cF$-torsion.
\end{proof}

In classical Morita theory, if both $\xi$ and $\zeta$ are surjective, then $\Mod A$ and $\Mod B$ are equivalent.
The following theorem can be regarded as a generalization of this result.

\begin{theorem}
  [{{\cite[Thm.\@ 2.13]{liNoncommutativeResolutionsASGorenstein2026}}}]
  \label{prop:def-induce-equiv}
  Let $(A,B,M,N,\xi,\zeta)$ be a Morita context.
  Suppose that $\cF$ and $\cG$ are Gabriel topologies on $A$ and $B$, respectively,
  and that ${}_BM_A$ and ${}_AN_B$ are compatible with these Gabriel topologies.
  Then the Morita context $(A,B,M,N,\xi,\zeta)$ induces an equivalence of categories
  \[\blank\otimes_{B/\cG}\pi(M): \QModG B \to \QModF A \, : \blank\otimes_{A/\cF}\pi(N)\] 
  if and only if $\coker(\xi)$ is $\cF$-torsion
  and $\coker(\zeta)$ is $\cG$-torsion.  
\end{theorem}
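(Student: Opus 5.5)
We argue the two directions separately. Throughout, write $\overline G=\blank\otimes_{B/\cG}\pi(M)$ and $\overline F=\blank\otimes_{A/\cF}\pi(N)$. By compatibility and \ref{prop:induced-functors}, these satisfy $\overline G\pi_{\cG}\cong\pi_{\cF}(\blank\otimes_BM)$ and $\overline F\pi_{\cF}\cong\pi_{\cG}(\blank\otimes_AN)$. By associativity of tensor products, the composites are then induced by tensoring with the bimodules $N\otimes_BM$ and $M\otimes_AN$:
\[
\overline G\,\overline F\,\pi_{\cF}\cong\pi_{\cF}(\blank\otimes_AN\otimes_BM),\qquad \overline F\,\overline G\,\pi_{\cG}\cong\pi_{\cG}(\blank\otimes_BM\otimes_AN).
\]
The candidate natural transformations are $\pi(\blank\otimes\xi)$ and $\pi(\blank\otimes\zeta)$. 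Since $\pi$ is essentially surjective, the context induces an equivalence exactly when $\pi_{\cF}(X\otimes_A\xi)$ is an isomorphism for every $X\in\Mod A$, and $\pi_{\cG}(Y\otimes_B\zeta)$ is an isomorphism for every $Y\in\Mod B$.

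\emph{Sufficiency.} Assume $\coker(\xi)$ is $\cF$-torsion. By \ref{enum:ker-torsion}, $\ker(\xi)$ is also $\cF$-torsion. We factor $\xi$ as $N\otimes_BM\twoheadrightarrow I\hookrightarrow A$ with $I=\im(\xi)$. Tensoring the short exact sequences $0\to\ker\xi\to N\otimes_BM\to I\to0$ and $0\to I\to A\to A/I\to0$ with $X$ gives
\[
X\otimes\ker\xi\to X\otimes(N\otimes_BM)\to X\otimes I\to 0
\]
and
\[
\Tor_1^A(X,A/I)\to X\otimes I\to X\to X\otimes A/I\to0 .
\]
All modules appearing here are bimodules whose right $A$-structure is $\cF$-torsion. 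Tensoring on the left preserves torsionness: for $u$ annihilated by $J\in\cF$, each $x\otimes u$ is annihilated by $J$, and $\Tor_1$ of a torsion right module is a subquotient of tensor products computed by a flat resolution of $X$. So the outer terms are $\cF$-torsion, and hence $\pi(X\otimes\xi)$ is an isomorphism. The same argument applies to $\zeta$. Naturality is clear, so the context induces an equivalence.

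\emph{Necessity.} Take $X=A$. Then $\pi(\xi)\colon\pi(N\otimes_BM)\to\pi(A)$ is an isomorphism, so $\pi(\coker\xi)=0$ and $\coker(\xi)$ is $\cF$-torsion. Similarly, taking $Y=B$ shows that $\coker(\zeta)$ is $\cG$-torsion.

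The main point needing care is the torsion claim for $\Tor_1^A(X,A/I)$ and $X\otimes\ker\xi$. This holds because $A/I$ is annihilated on the right by $I\in\cF$: the right $A$-action commutes with the derived functors of $X\otimes_A\blank$, so $\Tor_1^A(X,A/I)\cdot I=0$.
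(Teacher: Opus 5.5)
Your proof is correct. The paper does not reprove this statement (it is quoted from \cite[Thm.~2.13]{liNoncommutativeResolutionsASGorenstein2026}), but your argument is the same mechanism the paper uses for this step elsewhere: \ref{enum:ker-torsion} gives that $\ker(\xi)$ is torsion, and the bimodule short exact sequences through $\im(\xi)$ show that $\id_X\otimes\xi$ has $\cF$-torsion kernel and cokernel, as in the proofs of \ref{lem:equiv-quot-cat} and \ref{enum:tensor-equiv-implies-context-equiv-ii}; necessity then follows by evaluating at $X=A$ and $Y=B$, exactly as you do.
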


Indeed,
\ref{prop:def-induce-equiv} can be reformulated as follows by means of \ref{lem:context-ker-torsion}.

\begin{theorem}
  \label{thm:equiv-by-Morita-context}
  Let $(A,B,M,N,\xi,\zeta)$ be a Morita context, and suppose that
  $\cF$ and $\cG$ are Gabriel topologies on $A$ and $B$, respectively.
Then
  ${}_BM_A$ and ${}_AN_B$  are compatible with the Gabriel topologies $\cF$ and $\cG$,
  and the Morita context $(A,B,M,N,\xi,\zeta)$ induces an equivalence
  between $\QModF A$ and $\QModG B$ if and only if the following conditions hold.
  \begin{enumerate}
    \item[\enumilabel{(1a)}{enum:equiv-context-tensor}]
      $\blank\otimes_B M$ maps $\cG$-torsion modules to
      $\cF$-torsion modules.
    \item[(1b)] $\blank\otimes_A N$ maps $\cF$-torsion modules to
      $\cG$-torsion modules.
    \item[\enumilabel{(2a)}{enum:equiv-context-trace}]
      $\coker(\xi)$ is $\cF$-torsion.
    \item[\enumilabel{(2b)}{enum:equiv-context-trace-B}]
      $\coker(\zeta)$ is $\cG$-torsion.
  \end{enumerate}
\end{theorem}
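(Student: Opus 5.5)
The plan is to reduce the statement to \ref{prop:def-induce-equiv}, using \ref{enum:tor-torsion} to remove the separate hypothesis on the first Tor functors. The statement is a biconditional between two assertions. On one side: both bimodules are compatible and the context induces an equivalence. On the other side: conditions (1a), (1b), (2a), (2b). I treat the two directions separately.

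\emph{Necessity.} Suppose ${}_BM_A$ and ${}_AN_B$ are compatible with $\cF$ and $\cG$ and the context induces an equivalence. By \ref{def:compatible}, compatibility of $M$ says in particular that $\blank\otimes_B M$ sends $\cG$-torsion modules to $\cF$-torsion modules, which is (1a). Compatibility of $N$ gives (1b) in the same way. Since all compatibility hypotheses of \ref{prop:def-induce-equiv} hold, its ``only if'' direction yields (2a) and (2b) directly.

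\emph{Sufficiency.} Assume (1a)--(2b); the main task is to recover compatibility. By (2a), $\coker(\xi)$ is $\cF$-torsion, so \ref{enum:tor-torsion} applies: $\Tor_1^B(Y,M)$ is $\cF$-torsion for every $B$-module $Y$, and in particular for every $\cG$-torsion $Y$. Together with (1a), this means ${}_BM_A$ is compatible with $\cF$ and $\cG$.

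For $N$, I apply \ref{lem:context-ker-torsion} to the \emph{swapped} Morita context $(B,A,N,M,\zeta,\xi)$. I expect this to be the only step needing real care: one must check that swapping the roles gives a Morita context again. The two associativity identities in the definition are symmetric under the exchange $A\leftrightarrow B$, $M\leftrightarrow N$, $\xi\leftrightarrow\zeta$, with the pairings $(\,,)$ and $[\,,]$ interchanged, so this is a genuine Morita context. Under (2b), the lemma then shows that $\Tor_1^A(X,N)$ is $\cG$-torsion for every $A$-module $X$. Combined with (1b), ${}_AN_B$ is compatible.

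With both bimodules compatible, the ``if'' direction of \ref{prop:def-induce-equiv} applies to (2a) and (2b) and shows that the context induces an equivalence $\QModF A\simeq\QModG B$. This proves both directions.
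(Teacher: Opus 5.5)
Your proposal is correct and follows the paper's proof. Necessity comes from the definition of compatibility together with \ref{prop:def-induce-equiv}, and sufficiency uses \ref{enum:tor-torsion} to supply the $\Tor_1$ conditions before invoking \ref{prop:def-induce-equiv}. The only difference is that you spell out the application to $N$ through the swapped context $(B,A,N,M,\zeta,\xi)$, which the paper leaves implicit.
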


\begin{proof} The ``only if" part follows from the definition of compatibility and \ref{prop:def-induce-equiv}.

For the ``if" part,  by \ref{enum:tor-torsion}, both
  $\Tor_1^B(\blank, M)$ and $\Tor_1^A(\blank, N)$ send torsion modules to torsion modules. Hence, ${}_BM_A$ and ${}_AN_B$  are compatible with the Gabriel topologies $\cF$ and $\cG$. The conclusion follows from \ref{prop:def-induce-equiv}.
\end{proof}

In general, it is difficult to determine whether an equivalence between quotient categories is induced by a Morita context.
The following proposition provides information about this obstruction and
the additional properties of such an equivalence; moreover, it is used to show, in certain cases, that every equivalence between quotient categories of module categories is induced by a Morita context 
(see \ref{cor:quot-equiv-conseq-closed}).

\begin{proposition}\label{prop:tensor-equiv-implies-context-equiv}
  Let $(A,B,M,N,\xi,\zeta)$ be a Morita context. Suppose that $\cF$ and $\cG$ are
  Gabriel topologies on $A$ and $B$, respectively, and $\blank\otimes_B M$ induces an equivalence \[\blank\otimes_{\QModG B}\pi(M)\colon\QModG B\to\QModF A.\]
  Then, the following holds.
  \begin{enumerate}[ref=\ref{prop:tensor-equiv-implies-context-equiv}(\arabic*)]
    \item $\pi_{\cG} \Hom_{A/\cF}(\pi_{\cF}(M),\blank)$ is a quasi-inverse of $\blank\otimes_{\QModG B}\pi(M)$.
    \item\label{enum:tensor-equiv-implies-context-equiv-ii} If $\coker(\xi)$ is $\cF$-torsion,
    then $\pi_{\cG} \Hom_{A/\cF}(\pi_{\cF}(M),\blank)$ is induced by $\blank \otimes_A N$, and therefore 
    \[\pi_{\cG} \Hom_{A/\cF}(\pi_{\cF}(M),\blank) \cong \blank\otimes_{\QModF A}\pi(N).\]
    \item {\normalfont(\cite[Prop.\@ 2.2.6]{louInvariantTheoryRegular2016})}
    If $\coker(\xi)$ is $\cF$-torsion,
    then $\coker(\zeta)$ is $\cG$-torsion, and this equivalence is induced by $(A,B,M,N,\xi,\zeta)$.
    \item\label{cor:ample-implies-context-equiv} If the canonical map $N\to \Hom_A(M,A), n \mapsto \xi(n\otimes\blank)$ is an isomorphism, and $A$ is $\cF$-closed, then $\coker(\xi)$ is $\cF$-torsion.
  \end{enumerate}
\end{proposition}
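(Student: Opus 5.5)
Write $\overline G\defeq\blank\otimes_{\QModG B}\pi(M)$; it is an equivalence by hypothesis. The four parts are handled in order, each feeding the next.

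For (1), we identify a quasi-inverse of $\overline G$ as its right adjoint. Any equivalence has a right adjoint that is also a quasi-inverse, so it suffices to show that $\pi_{\cG}\Hom_{A/\cF}(\pi(M),\blank)$ is right adjoint to $\overline G$. Set $H\defeq\omega_{\cG}\overline G^{-1}$, where $\overline G^{-1}$ is some quasi-inverse. Then
\[
  H(\cX)\cong\Hom_{B/\cG}\bigl(\pi(B),\overline G^{-1}(\cX)\bigr)\cong\Hom_{A/\cF}\bigl(\overline G\pi(B),\cX\bigr)=\Hom_{A/\cF}\bigl(\pi(M),\cX\bigr)
\]
as right $B$-modules. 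The first isomorphism uses $\omega_{\cG}(\cY)\cong\Hom_B(B,\omega_{\cG}\cY)\cong\Hom_{B/\cG}(\pi B,\cY)$, and the $B$-action is through $\End(\pi(B))\to\End(\pi(M))$, induced by the left $B$-action on $M$. Applying $\pi_{\cG}$ and using $\pi_{\cG}\omega_{\cG}\cong\id$ gives $\pi_{\cG}\Hom_{A/\cF}(\pi(M),\blank)\cong\overline G^{-1}$. The one point needing care is compatibility of the $B$-module structures; it follows from naturality of $\overline G(b\cdot\blank)$.

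For (2), assume $\coker(\xi)$ is $\cF$-torsion. We compare $\pi_{\cG}(X\otimes_A N)$ with $\pi_{\cG}\Hom_{A/\cF}(\pi M,\pi X)$ via the natural map
\[
  X\otimes_A N\to\Hom_A(M,X)\to\Hom_{A/\cF}(\pi M,\pi X),\qquad x\otimes n\mapsto\bigl(m\mapsto x\,(n,m)\bigr).
\]
The target functor $X\mapsto\pi_{\cG}\Hom_{A/\cF}(\pi M,\pi X)$ is exact and kills $\cF$-torsion, because $\pi_{\cF}$ does and, by (1), the rest is an equivalence. Hence $X\mapsto\pi_{\cG}(X\otimes_A N)$ agrees with it on $X=A$ exactly when $N\to\Hom_{A/\cF}(\pi M,\pi A)$ becomes an isomorphism after applying $\pi_{\cG}$. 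To check this, we use that $\ker(\xi)$ and $\coker(\xi)$ are $\cF$-torsion (\ref{enum:ker-torsion}), so $\pi(N\otimes_B M)\cong\pi(A)$. Since $\overline G$ is an equivalence,
\[
  \Hom_{A/\cF}(\pi M,\pi A)\cong\Hom_{A/\cF}\bigl(\overline G\pi B,\overline G\pi N\bigr)\cong\Hom_{B/\cG}(\pi B,\pi N)=\omega_{\cG}\pi(N),
\]
and under this identification the map from $N$ is $\eta^N$. Thus its kernel and cokernel are $\cG$-torsion. Extending from $A$ to all $X$ goes by right exactness through free presentations. A right-exact comparison argument in the style of \ref{prop:induced-functors} shows that $\blank\otimes_A N$ then sends $\cF$-torsion modules and their $\Tor_1$ to $\cG$-torsion, so it induces $\blank\otimes_{\QModF A}\pi(N)$, which is isomorphic to the quasi-inverse. \textbf{This is the step I expect to be the main obstacle:} it requires the natural transformation to be compatible with the identification above, and the passage from free modules to arbitrary ones must be done inside the quotient category.

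For (3), by (2) the functors $\overline F=\blank\otimes\pi(N)$ and $\overline G$ are mutually quasi-inverse. The composite $\overline G\,\overline F$ is induced by $\blank\otimes_A N\otimes_B M$, and $\blank\otimes\xi$ gives a natural transformation from it to the identity. This transformation is an isomorphism in $\QModF A$ because $\ker(\xi)$ and $\coker(\xi)$ are torsion. For $\zeta$, apply $\overline F\,\overline G\cong\id$ at $\pi(B)$: $\pi(M\otimes_A N)\cong\pi(B)$. Using associativity of the context, this isomorphism can be chosen to be $\pi(\zeta)$, for instance by checking that $\overline G(\pi\zeta)$ corresponds to $\pi(\xi\otimes M)$ under the identifications. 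Hence $\pi(\zeta)$ is an isomorphism, so in particular $\coker(\zeta)$ is $\cG$-torsion. The equivalence is then induced by the Morita context.

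For (4), set $I=\im(\xi)$ and consider $A/I$. We need $\pi(A/I)=0$. By (1), $\Hom_{A/\cF}(\pi M,\blank)$ is faithful on $\QModF A$ after applying $\pi_{\cG}$. So it suffices to show that every morphism $\pi M\to\pi(A/I)$ is $\cG$-torsion, which reduces to showing that $\Hom_A(M,A)\to\Hom_{A/\cF}(\pi M,\pi(A/I))$ has $\cG$-torsion image, combined with $\cG$-projectivity coming from exactness in (1). Since $A$ is $\cF$-closed, $\Hom_{A/\cF}(\pi M,\pi A)=\Hom_A(M,A)\cong N$, and every $n\in N$ maps $M$ into $I$. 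Exactness of $\Hom_{A/\cF}(\pi M,\blank)$ modulo $\cG$-torsion on $0\to\pi I\to\pi A\to\pi(A/I)\to0$ then shows that $\Hom(\pi M,\pi(A/I))$ is $\cG$-torsion. Hence $\pi(A/I)=0$.
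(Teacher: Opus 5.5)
Your proof is correct. Parts (1) and (3) match the paper in substance. The paper obtains (1) from the commuting square $\overline G\pi_{\cG}\cong\pi_{\cF}(\blank\otimes_B M)$ and uniqueness of right adjoints, which gives $\omega_{\cG}\overline F\cong\Hom_A(M,\omega_{\cF}(\blank))$; your computation of $\omega_{\cG}\overline G^{-1}$ does the same thing by hand. Part (3) rests on exactly the identity $\zeta\otimes_B M=\id_M\otimes_A\xi$; you wrote $\pi(\xi\otimes M)$ where $\pi(\id_M\otimes\xi)$ is meant. Parts (2) and (4) take genuinely different routes.

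For (2), the paper builds no comparison map. Since $\pi_{\cF}(\id_X\otimes\xi)$ is an isomorphism, it writes $\overline F\pi_{\cF}(X)\cong\overline F\pi_{\cF}(X\otimes_A N\otimes_B M)\cong\overline F\,\overline G\pi_{\cG}(X\otimes_A N)\cong\pi_{\cG}(X\otimes_A N)$, naturally in $X$. The obstacle you flag therefore never arises, and compatibility of $N$ follows from the descent criterion.

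Your route also works. The identification you assert is right: with $l_n\colon B\to N$, $b\mapsto nb$, the map $\overline G\pi_{\cG}(l_n)\cong\pi_{\cF}(l_n\otimes M)$ followed by $\pi(\xi)$ is $\pi(\xi(n\otimes\blank))$, so your map at $X=A$ is $\eta^N$ up to isomorphism. When you propagate through free presentations, you should state that both functors commute with arbitrary coproducts, since free modules need not be finitely generated. This holds because $\overline F\pi_{\cF}$ is a left adjoint followed by an equivalence. Your version buys an explicit natural isomorphism, and at this stage it uses only that $\pi(\xi)$ is invertible.

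For (4), the paper identifies the counit of $(\overline G,\overline F)$ with $\pi(\ev^{\omega(\cX)})$ up to isomorphisms. It deduces that $\ev^X\colon\Hom_A(M,X)\otimes_B M\to X$ is an isomorphism modulo $\cF$-torsion for every $\cF$-closed $X$. It then factors $\xi=\ev^A\circ(\theta\otimes\id_M)$, where $\theta\colon N\to\Hom_A(M,A)$ is the canonical map.

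You instead apply the exact equivalence $\pi_{\cG}\Hom_{A/\cF}(\pi M,\blank)$ from (1) to $\pi A\to\pi(A/I)\to 0$:
\begin{itemize}
\item Because $A$ is closed, every morphism $\pi M\to\pi A$ has the form $\pi(f)$ with $f\colon M\to A$.
\item Surjectivity of $N\to\Hom_A(M,A)$ gives $f(M)\subseteq I$.
\item So the induced map on Hom-modules is zero, and $\Hom_{A/\cF}(\pi M,\pi(A/I))$ is $\cG$-torsion.
\item Hence $\pi(A/I)=0$.
\end{itemize}
This is shorter and avoids chasing the counit through several identifications. The paper's argument costs more but yields the stronger by-product about $\ev^X$ for every closed $X$.
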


\begin{proof}
  Let $G=\blank \otimes_B M$, $F=\Hom_A(M,\blank)$,
  $\overline G=\blank\otimes_{\QModG B}\pi(M)$ and let $\overline F$ be a quasi-inverse of $\overline G$.
  %$\blank\otimes_{\QModG B}\pi(M)$.
  % Consider the following diagram of adjunctions.
  % \begin{equation*}
  %   % \label[diagram]{diagr:context-adjunction}
  %   % https://tex.stackexchange.com/a/52379
  %   \begin{tikzcd}[column sep=huge,
  %     left adj/.style={rightharpoonup,shift left=2,},
  %     right adj/.style={leftharpoondown,shift right=2,},
  %     left adj rev/.style={leftharpoonup,shift left=2,},
  %     right adj rev/.style={rightharpoondown,shift right=2,},
  %     ]
  %     \Mod A & \Mod B\\
  %     \QModF A & \QModG B
  %     \arrow[from=1-1,left adj rev,]{1-2}[]{\blank\otimes_B M}
  %     \arrow[from=1-1,right adj rev,]{1-2}[swap,]{\Hom_A(M,\blank)}
  %     \arrow[from=1-1,no line]{1-2}[rotate=90,sloped,]{\vdash}
  %     \arrow[from=2-1,left adj rev,]{1-1}[]{\pi}
  %     \arrow[from=2-1,right adj rev,]{1-1}[swap,]{\omega}
  %     \arrow[from=2-1,no line,]{1-1}[rotate=90,sloped,]{\vdash}
  %     \arrow[from=2-1,left adj,]{2-2}[]{F}
  %     \arrow[from=2-1,right adj,]{2-2}[swap,]{G}
  %     \arrow[from=2-1,no line]{2-2}[sloped,]{\simeq}
  %     \arrow[from=1-2,left adj,]{2-2}[]{\pi}
  %     \arrow[from=1-2,right adj,]{2-2}[swap,]{\omega}
  %     \arrow[from=1-2,no line,]{2-2}[rotate=90,sloped,]{\vdash}
  %   \end{tikzcd}
  % \end{equation*}
  
  (1) The outer square in \ref{diagr:context-adjunction} is commutative by assumption. It follows from the uniqueness of adjoint functors that the inner square is also commutative, that is, $F\omega_{\cF} \cong \omega_{\cG}\overline F$.
  Then
  \[\overline F\cong\pi_{\cG}\omega_{\cG} \overline F \cong \pi_{\cG} \Hom_A(M,-)\,\omega_{\cF}\cong \pi_{\cG} \Hom_{\QModF A}(\pi_{\cF}(M), \blank).\]

  (2) Suppose that $\coker(\xi)$ is $\cF$-torsion. Then, by \ref{enum:ker-torsion}, $\ker(\xi)$ is also $\cF$-torsion. 
  Hence, for any $X\in \Mod A$, the morphism
  % $X\otimes_A N\otimes_BM
  % \xrightarrow{\,\id_X\otimes_A\xi\,}
  % X$
  $\id_X\otimes\xi \colon X\otimes_A N\otimes_BM
  \to X$ is an isomorphism in $\QModF A$. 
  Therefore,
  there are natural isomorphisms of functors:
  \begin{equation*}
    \begin{aligned}
      \overline F\pi_{\cF} &\cong \overline F\pi_{\cF}(\blank\otimes_A N\otimes_B M)\\
      &\cong \overline F\bigl((\pi_{\cG}(\blank\otimes_A N))\otimes_{\QModG B}\pi(M)\bigr)\\
      &= (\overline{F} \, \overline{G})(\pi_{\cG}(\blank\otimes_A N)) \cong \pi_{\cG}(\blank\otimes_A N).
    \end{aligned}
  \end{equation*}
  It follows that $\overline F$ is induced by $\blank\otimes_A N$. 

  (3) Let $\xi_X= \id_X\otimes \xi\colon X\otimes_A N\otimes_B M \to X$. Then
  \[
      \alpha= \{\alpha^{\pi(X)}=\pi_{\cF}(\xi_X) \colon \overline{G}\,\overline{F}(\pi_{\cF}(X)) \to \pi_{\cF}(X) \mid X\in \Mod A\}\colon \overline{G}\,\overline{F}\xrightarrow[]{\cong} \id_{\QModF A}
  \]
  is a natural isomorphism.
  Similarly, 
  the other Morita context map
  $\zeta\colon M\otimes_A N\to B$
  induces a natural transformation
  \[
  \beta=\{\beta^{\pi(Y)} = \pi_{\cG}(\zeta_Y) \colon \overline{F}\,\overline{G}(\pi_{\cG}(Y)) \to \pi_{\cG}(Y) \mid Y\in\Mod B\}\colon \overline{F}\, \overline{G}\to \id_{\QModG B},
  \]
  where $\zeta_Y$ is defined as $\id_Y\otimes \zeta\colon Y\otimes_B M\otimes_A N\to Y$.
  
  For any $Y\in\Mod B$, one has $\xi_{G(Y)}=G(\zeta_Y)$ by the associativity condition of the Morita context.
  Since $\alpha$ is a natural isomorphism,
  $\pi_{\cF}G(\zeta_Y)=\pi_{\cF}(\xi_{G(Y)})=\alpha^{\pi G(Y)}$
  is an isomorphism.
  % \[
  %   \begin{tikzcd}
  %     Y\otimes_B M\otimes_A N\otimes_B M & Y\otimes_B M\otimes_A A\\
  %     Y\otimes_B B\otimes_B M & Y\otimes_B M
  %     \arrow[from=1-1,]{1-2}[]{}
  %     \arrow[from=2-1,]{2-2}[]{}
  %     \arrow[from=1-1,]{2-1}[]{}
  %     \arrow[from=1-2,]{2-2}[]{}
  %   \end{tikzcd}
  % \]
  It follows from the natural isomorphism $\pi_{\cF} G\cong \overline{G}\pi_{\cG}$ that  $\overline{G}\beta^{\pi(Y)}=\overline{G}\pi_{\cG}(\zeta_Y)$ is an isomorphism; hence, $\beta^{\pi(Y)}$ is an isomorphism. %, and therefore, $\beta$ is a natural isomorphism.
  % Therefore, $\pi_{\cG}(\zeta_Y)$ is an isomorphism.
  In particular, taking $Y=B$, one obtains that $\pi_{\cG}(\zeta)$
  % \[\pi_{\cG}(\zeta)\colon\pi_{\cG}(M\otimes_AN) \to \pi_{\cG}(B)\]
  is an isomorphism.
  Therefore, $\coker(\zeta)$ is $\cG$-torsion, and the equivalence
  \[
      % \blank\otimes_{\QModG B}\pi(M)\colon \QModF A \simeq \QModG B\reflectbox{$\colon$} \blank\otimes_{\QModF A}\pi(N)
      \overline F\colon \QModF A \simeq \QModG B\colon \overline G
  \]
  is induced by $(A,B,M,N,\xi,\zeta)$.

  (4) For any $X\in \Mod A$,
  let $\ev^X\colon \Hom_A(M,X)\otimes_B M\to X, f\otimes m\mapsto f(m)$ be the evaluation map.
 As in the proof of (3)$\Rightarrow$(1) in \ref{lem:adjunction-diagram},
  consider the following diagram of natural isomorphisms, which gives an adjoint pair $(\overline G, \overline F)$.
  \begin{equation*}
    \begin{tikzcd}
      \Hom_{\QModG B}\bigl(\cY, \overline{F}(\cX)\bigr) & \Hom_{\QModF A}\bigl(\overline{G}(\cY), \cX\bigr)\\
      \Hom_B\bigl(\omega(\cY), \omega\overline{F}(\cX)\bigr) & \Hom_{\QModF A}\bigl(\pi G\omega(\cY), \cX\bigr)\\
      \Hom_B\bigl(\omega(\cY), F\omega(\cX)\bigr) & \Hom_A\bigl(G\omega(\cY), \omega(\cX)\bigr)
      \arrow[from=3-1,]{3-2}[]{\textrm{adj}}[swap,]{\cong}
      \arrow[from=1-1,dashed,]{1-2}[]{\cong}
      \arrow[from=2-1,]{3-1}[]{\text{\ref{lem:adjunction-diagram}}}[swap,]{\cong}
      \arrow[from=1-1,]{2-1}[]{\omega}[swap,]{\cong}
      \arrow[from=3-2,]{2-2}[]{\textrm{adj}}[swap,]{\cong}
      \arrow[from=2-2,]{1-2}[]{\text{\ref{prop:induced-functors}}}[swap,]{\cong}
    \end{tikzcd}
  \end{equation*}

  Let $\cY=\overline{F}(\cX)$, and let $\alpha\colon\omega\overline{F}\cong F\omega$ denote the natural isomorphism given in \ref{lem:adjunction-diagram}.
  Along the left column of the above diagram, the identity morphism of $\overline{F}(\cX)$ corresponds to $\alpha^{\cX}$.
  Under the tensor-Hom adjunction, $\alpha^{\cX}$ corresponds to $\ev^{\omega(\cX)}G(\alpha^{\cX})$, which, under the adjoint pair $(\pi, \omega)$, corresponds to $\epsilon^{\cX}\pi(\ev^{\omega(\cX)})\pi G(\alpha^{\cX})$.
  Composing the isomorphism $\overline{G}\,\overline{F}(\cX)\cong\pi G\omega\overline{F}(\cX)$ from \ref{prop:induced-functors} with the following morphisms yields a morphism $\overline G\,\overline F(\cX)\to\cX$:
  \[
    \overline{G}\,\overline{F}(\cX)\xrightarrow[\cong]{}\pi G\omega\overline F (\cX)
    \xrightarrow[\cong]{\pi G(\alpha^{\cX})}\pi GF\omega(\cX)\xrightarrow{\pi(\ev^{\omega(\cX)})}\pi\omega(\cX)\xrightarrow[\cong]{\epsilon^{\cX}}\cX.
  \]
  This is the counit of the adjoint pair $(\overline G, \overline F)$ evaluated at $\cX$, and it is an isomorphism because $\overline G$ and $\overline F$ are mutually quasi-inverse. Therefore, $\pi(\ev^{\omega(\cX)})$ is an isomorphism; hence, the kernel and cokernel of $\ev^{\omega(\cX)}$ are $\cF$-torsion.

  Since $A$ is assumed to be $\cF$-closed, the kernel and cokernel of $\ev^A$ are respectively isomorphic to those of $\ev^{\omega\pi(A)}$ and are therefore $\cF$-torsion.
  Denote by $\theta$ the isomorphism $N\to \Hom_A(M,A), n \mapsto \xi(n\otimes\blank)$. 
  Then the composition of the following morphisms
  \[
      N\otimes_B M\xrightarrow[]{\theta\otimes \id_M} \Hom_A(M,A)\otimes_B M \xrightarrow[]{\ev^A} A
  \]
  is precisely $\xi$. 
  Since $\theta$ is an isomorphism and $\coker(\ev^A)$ is $\cF$-torsion, it follows that $\coker(\xi)$ is $\cF$-torsion.
\end{proof}

\begin{remark}
  The condition that $\coker(\xi)$ is $\cF$-torsion is necessary
  in \ref{enum:tensor-equiv-implies-context-equiv-ii}.
  A simple counterexample is given by
  the Morita context $(A, A, A, 0, 0, 0)$
  with trivial topologies.
\end{remark}

\subsection{A Morita-type theorem for quotient categories}

% Now we are ready to give a Morita-type theorem for quotient categories.

We now turn to the converse construction: starting with an equivalence between quotient categories,
we recover bimodules that induce it.

\begin{theorem}\label{thm:morita-type}
Let $A$ and $B$ be rings equipped with Gabriel topologies
$\cF$ and $\cG$, respectively. Suppose that there is an
equivalence of categories
  \begin{equation*}
    % \overline F\colon \QModF A \simeq \QModG B\reflectbox{$\colon$} \overline G.
    \overline F\colon \QModF A \simeq \QModG B\colon \overline G.
  \end{equation*}
%where $G$ is a quasi-inverse of $F$.
Let $\cF'$ and $\cG'$ be the Gabriel topologies on
$A_{\cF}$ and $B_{\cG}$ induced by the canonical ring morphisms, respectively. 
% Transporting $F$ and $G$ to the
% categories of closed modules, set
% \[
% \widetilde F
%  \defeq
% \bar{a}_{\cG}F\bar{a}_{\cF}^{-1}
% \colon
% \Mod (A,\cF)
%  \xrightarrow{\sim}
% \Mod (B,\cG)
% \]
% and
% \[
% \widetilde G
%  \defeq
% \bar{a}_{\cF}G\bar{a}_{\cG}^{-1}.
% \]
% Define
% \[
% M
%  \defeq
% \bar{a}_{\cF}G\pi_{\cG}(B)
%  =\widetilde G(B_{\cG}),
% \qquad
% N
%  \defeq
% \bar{a}_{\cG}F\pi_{\cF}(A)
%  =\widetilde F(A_{\cF}).
% \]
Let $M \defeq \bar{a}_{\cF}\overline G\pi_{\cG}(B), \textrm{ and }
N \defeq \bar{a}_{\cG}\overline F\pi_{\cF}(A)$, where
$M$ is regarded naturally as an $A$-module and an $A_{\cF}$-module, and similarly $N$ is regarded naturally
as a $B$-module and a $B_{\cG}$-module. Then the following
statements hold.
\begin{enumerate}[ref=\ref{thm:morita-type}(\arabic*)]
  \item%\label{enum:quot-equiv-conseq-ring}
    There are natural ring isomorphisms
    \[
      \begin{aligned}
        A_{\cF} &\cong
        \End_{B/\cG} \bigl(\pi_{\cG}(N)\bigr)
        \cong \End_{B_{\cG}}(N) = \End_{B}(N); \\
        B_{\cG} &\cong
        \End_{A/\cF} \bigl(\pi_{\cF}(M)\bigr)
        \cong \End_{A_{\cF}}(M) = \End_{A}(M).
      \end{aligned}
    \]
    Via these isomorphisms, $M$ is a $B_{\cG}$-$A_{\cF}$-bimodule
    and $N$ is an $A_{\cF}$-$B_{\cG}$-bimodule.
  \item There are natural bimodule isomorphisms
    \[
      M \cong \Hom_{B_{\cG}}(N,B_{\cG}),
      \quad
      N \cong \Hom_{A_{\cF}}(M,A_{\cF}).
    \]
  \item\label{enum:quot-equiv-conseq-hom} There is an equivalence of categories
    \[ \Hom_A(M,\blank) \colon \Mod (A,\cF) \simeq \Mod (B,\cG) \colon \Hom_B(N,\blank).\]
  \item\label{enum:quot-equiv-conseq-functor} There are natural isomorphisms
    \begin{align*}
      \overline F &\cong \pi_{\cG} \Hom_{A/\cF}\bigl(\pi_{\cF}(M),\blank\bigr)\cong \blank\otimes_{A/\cF} \pi(N);\\
      \overline G &\cong \pi_{\cF}\Hom_{B/\cG}\bigl(\pi_{\cG}(N),\blank\bigr)\cong\blank\otimes_{B/\cG} \pi(M).
    \end{align*}
    In particular, $\overline F$ is induced by $\blank\otimes_A N$ and $\overline G$ is induced by $\blank\otimes_B M$.
  \item The object $\pi_{\cF}(M)$ is fully $\cG'$-ample in $\Mod A/\cF$,
    and $\pi_{\cG}(N)$ is fully $\cF'$-ample in $\Mod B/\cG$.
  \item The centres $Z(A_{\cF})$ and $Z(B_{\cG})$ are isomorphic.
\end{enumerate}
\end{theorem}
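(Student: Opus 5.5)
The plan is to transport everything to the closed-module categories and reduce to the closed case by means of \ref{cor:eta_A-induces-equi}. Put $\widetilde F\defeq \bar a_{\cG}\overline F(\pi i)$ and $\widetilde G\defeq \bar a_{\cF}\overline G(\pi i)$; these are mutually quasi-inverse equivalences between $\Mod(A,\cF)$ and $\Mod(B,\cG)$. By construction $M=\widetilde G(B_{\cG})$ and $N=\widetilde F(A_{\cF})$. For (1), full faithfulness of $\overline F$ gives
\[
A_{\cF}=\End_{A/\cF}(\pi(A))\cong\End_{B/\cG}\bigl(\overline F\pi(A)\bigr)=\End_{B/\cG}(\pi_{\cG}(N)),
\]
and $\omega_{\cG}$ (which is fully faithful, with $\omega_{\cG}\pi_{\cG}(N)=N$) identifies this ring with $\End_B(N)$. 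Any $B$-linear endomorphism of a $\cG$-closed module is automatically $B_{\cG}$-linear, since $\Mod(B,\cG)$ is equivalent to a full subcategory of $\Mod B_{\cG}$; hence $\End_B(N)=\End_{B_{\cG}}(N)$. The argument for $M$ is symmetric, and these identifications supply the bimodule structures.

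For (3) and (4), observe that the right adjoint $\Hom_A(M,\blank)$ of $\blank\otimes_B M$ carries a closed $X=\omega_{\cF}\cX$ to
\[
\Hom_A(M,X)\cong\Hom_{A/\cF}\bigl(\overline G\pi(B),\cX\bigr)\cong\Hom_{B/\cG}\bigl(\pi(B),\overline F\cX\bigr)=\omega_{\cG}\overline F(\cX).
\]
This is closed, and the isomorphism is natural and compatible with the right $B$-actions. Thus $\Hom_A(M,\blank)\omega_{\cF}\cong\omega_{\cG}\overline F$, which is condition (3) of \ref{lem:adjunction-diagram}. That lemma then yields that $\blank\otimes_B M$ induces $\pi_{\cF}(\blank\otimes_B M)\omega_{\cG}$, a left adjoint of $\overline F$. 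Since $\overline G$ is also a left adjoint of $\overline F$, the two agree, so $\overline G\cong\blank\otimes_{B/\cG}\pi(M)$ and $\overline F\cong\pi_{\cG}\Hom_{A/\cF}(\pi M,\blank)$. The symmetric statement follows in the same way, and (3) follows by restricting $\Hom_A(M,\blank)\omega_{\cF}\cong\omega_{\cG}\overline F$ to closed modules. For (2), evaluate at $A_{\cF}=\omega\pi(A)$:
\[
\Hom_A(M,A_{\cF})\cong\omega_{\cG}\overline F\pi(A)=N.
\]
Moreover $\Hom_A(M,A_{\cF})=\Hom_{A_{\cF}}(M,A_{\cF})$, again because both modules are closed. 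The remaining work is to check that this isomorphism is $A_{\cF}$-$B_{\cG}$-bilinear, which comes down to compatibility of the end-ring isomorphisms in (1) with naturality.

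For (5), apply \ref{prop:pi-A-ample} to $B$: the object $\pi_{\cG}(B)$ is fully $\cG'$-ample in $\QModG B$, with $\End=B_{\cG}$. Full $\cG'$-ampleness is defined purely categorically: it requires a generator, $\cG'$-projectivity, $\cG'$-compactness and $\cG'$-self-generation, all of which refer only to $\cC$ and to the end-ring with its topology. Hence it transports along the equivalence $\overline G$ to $\overline G\pi(B)\cong\pi_{\cF}(M)$, with end-ring identified with $B_{\cG}$ by (1). For (6), the centre of $A_{\cF}=\End(\pi A)$ for a generator $\pi(A)$ should be the centre of the category $\QModF A$ (the endomorphisms of the identity functor), and an equivalence preserves that centre. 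I expect this identification to be the main obstacle. To prove it, I would show that every endomorphism of $\pi(A)$ commuting with $\End(\pi A)$ extends uniquely to a natural endomorphism of the identity functor, using that each object is a quotient of a coproduct of copies of $\pi(A)$ and that $\pi(A)$ is $\cF'$-projective and $\cF'$-compact. Alternatively, I would deduce it from the Morita equivalence (3) of the module categories of closed modules.
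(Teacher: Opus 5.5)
Your arguments for (1)--(5) are essentially the paper's. The paper also obtains $\Hom_A(M,X)\cong\Hom_{B/\cG}\bigl(\pi_{\cG}(B),\overline F\pi_{\cF}(X)\bigr)$ for closed $X$ from full faithfulness and the adjunction. It deduces (3), and then (4) via \ref{lem:adjunction-diagram}. It evaluates at $A_{\cF}$ for (2), and transports \ref{prop:pi-A-ample} along $\overline G$ for (5). Your opening appeal to \ref{cor:eta_A-induces-equi} is never actually used.

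The genuine difference is (6).

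\textbf{The paper's route.} It assembles (1) and (2) into a right normalized Morita context $(A_{\cF},B_{\cG},M,N,\xi,\zeta)$. It then cites M\"uller's corollary that such contexts have isomorphic centres. This keeps (6) inside the Morita-context framework of the section, but it rests on an external result.

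\textbf{Your route.} You identify $Z(A_{\cF})$ with the centre $\End(\id)$ of the category and use that this is invariant under equivalence. This is self-contained and more conceptual, and your ``alternative'' finishes it in a few lines:
\begin{enumerate}
\item Closed $A$-modules are $A_{\cF}$-modules, and $A$-linear maps between them are $A_{\cF}$-linear. This holds because $\coker(\eta^A)$ is $\cF$-torsion and the targets are $\cF$-torsion-free.
\item Hence a central $z\in A_{\cF}$ acts naturally by right multiplication on $\Mod(A,\cF)$.
\item Conversely, any natural endomorphism $\alpha$ of the identity is recovered from $z=\alpha_{A_{\cF}}(1)$. This follows from naturality along the maps $A_{\cF}\to X$, $a\mapsto xa$.
\item Now apply (3).
\end{enumerate}

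If you instead pursue your first sketch via coproducts of $\pi(A)$, there is one point to watch. $\cF'$-compactness only describes morphisms $\pi(A)\to\pi(A)^{\oplus\Lambda}$ up to an $\cF'$-torsion cokernel. To conclude that $z^{\oplus\Lambda}$ commutes with such a morphism $h$, first note that $z^{\oplus\Lambda}h-hz$ is killed by some $J\in\cF'$. You then need the $\cF'$-torsion-freeness of $\Hom_{A/\cF}(\pi(A),\blank)\cong\omega(\blank)$ to conclude that it vanishes.
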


% 我应该在下一个推论的(4)的证明中提一下Morita context的结构映射是怎么构造的
\begin{proof}
By symmetry, it suffices to prove the assertions involving $M$; the
corresponding assertions involving $N$ follow in the same way.
The commutative diagram \ref{eq:localization-diagram} will be used freely here.
% \begin{equation*}
%   \begin{tikzcd}[
%     row sep=large,
%     left diagonal/.style={
%       start anchor={[sloped,allow upside down,xshift=-0.5em]north west},
%       end anchor={[sloped,allow upside down,xshift=-1em]south}
%     },
%     right diagonal/.style={
%       start anchor={[sloped,allow upside down,xshift=1em]south},
%       end anchor={[sloped,allow upside down,xshift=0.5em]north east}
%     }
%   ]
%     \Mod A & & \Mod A/\cF\\
%     & {\Mod(A,\cF)} &
%     \arrow[from=1-1,left adj]{1-3}[]{\pi}
%     \arrow[from=1-1,right adj]{1-3}[swap]{\omega=i\bar a}
%     \arrow[from=1-1,no line]{1-3}[rotate=90,sloped]{\vdash}
%     \arrow[from=2-2,left diagonal,left adj rev]{1-1}[swap,sloped]{a}
%     \arrow[from=2-2,left diagonal,right adj rev]{1-1}[sloped]{i}
%     \arrow[from=2-2,left diagonal,no line]{1-1}
%       [rotate=90,sloped,allow upside down]{\vdash}
%     \arrow[from=1-3,right diagonal,left adj rev]{2-2}[swap,sloped]{\pi i}
%     \arrow[from=1-3,right diagonal,right adj rev]{2-2}[sloped]{\bar a}
%     \arrow[from=1-3,right diagonal,no line]{2-2}[sloped]{\simeq}
%   \end{tikzcd}
% \end{equation*}
  \begin{equation*}
\begin{tikzcd}
\Mod A \arrow[rr, "\pi", shift left] \arrow[rd, "a"', shift right=2] &                                                   & \Mod A/\cF \arrow[ll, "\omega = i \bar{a}", shift left] \arrow[ld, "\bar{a}"' ] \\
 & {\Mod (A, \cF)} \arrow[lu, "i"'] \arrow[ru, "\pi i"', shift right=2] &
\end{tikzcd}
  \end{equation*}

% 在(2.1)前的段落里提到过一句 Mod-(A,\cF)->Mod-(A_{\cF}) 是 fully faithful 的
(1) Since $\bar{a}_{\cF}$ and $\overline G$ are fully
faithful, it follows that
\[\End_{A_{\cF}}(M)\cong\End_{A/\cF} \bigl(\overline G\pi_{\cG}(B)\bigr) 
\cong\End_{B/\cG} \bigl(\pi_{\cG}(B)\bigr)
 = B_{\cG} .\]
Moreover, since
$\pi_{\cF}(M) \cong\overline G\pi_{\cG}(B)$,
it follows that
$\End_{A/\cF}
 \bigl(\pi_{\cF}(M)\bigr)
 \cong B_{\cG}$.
The conclusion then follows from 
$\End_{A}(M) \cong \End_{A_{\cF}}(M)$ since $M \in \Mod (A,\cF)$
is an $\cF$-closed module.

(2) Since $\bar{a}_{\cF}$ is an equivalence, and $(\overline G, \overline F)$ is a pair of adjoint functors, we have
% the adjunction between $\overline F$ and $\overline G$ give natural isomorphisms
\[
\begin{aligned}
\Hom_{A_{\cF}}(M,A_{\cF}) &
\cong
\Hom_{A/\cF}
 \bigl(\overline G\pi_{\cG}(B),\pi_{\cF}(A)\bigr) \\
&\cong
\Hom_{B/\cG}
 \bigl(\pi_{\cG}(B),\overline F\pi_{\cF}(A)\bigr) \\
&\cong
\bar{a}_{\cG}\overline F\pi_{\cF}(A)
=N.
\end{aligned}
\]
These isomorphisms are compatible with the left
$A_{\cF}$-action and the right $B_{\cG}$-action, and
hence are bimodule isomorphisms.

(3) Let $X\in\Mod (A,\cF)$. Since both $M$ and $X$ are $\cF$-closed, again using the
full faithfulness of $\bar{a}_{\cF}$ and the adjunction between
$\overline F$ and $\overline G$, we obtain
\begingroup\allowdisplaybreaks
\begin{align*}
\Hom_A(M, X) &\cong
\Hom_{A/\cF}\bigl(\overline G\pi_{\cG}(B),\pi_{\cF}i_{\cF}(X)\bigr) \\
&\cong \Hom_{B/\cG}\bigl(\pi_{\cG}(B), \overline F\pi_{\cF}i_{\cF}(X)\bigr) \\
&\cong \bar{a}_{\cG} \overline F\pi_{\cF}i_{\cF}(X) \\
&=\widetilde F(X),
\end{align*}
\endgroup
where $\widetilde F \defeq \bar{a}_{\cG}\overline F\pi_{\cF}i_{\cF}$.
Thus,
$\widetilde F\cong \Hom_A(M,\blank) \colon \Mod (A, \cF) \to \Mod (B, \cG)$. 
Similarly, $\widetilde G\defeq \bar{a}_{\cF}\overline G \pi_{\cG}i_{\cG}\cong \Hom_B(N,\blank)\colon \Mod (B,\cG)\to \Mod (A,\cF)$.
Since $(\overline a,\pi i)$ is a pair of equivalence, it follows that $\widetilde F$ and $\widetilde G$ are quasi-inverse to each other, and thus so are $\Hom_A(M,\blank)$ and $\Hom_B(N,\blank)$.

(4)
It follows from $\bar{a}_{\cG}\overline F\pi_{\cF}i_{\cF}\cong \Hom_A(M,\blank)$
that
$\overline F \cong \pi_{\cG}\Hom_A\bigl(M,\bar{a}_{\cF}(\blank)\bigr)$, 
since $\pi i$ and $\bar{a}$ are quasi-inverse to one another. 
Furthermore,
since $\bar{a}_{\cF} \pi_{\cF}(M) \cong M$,
\[
\Hom_A\bigl(M,\bar{a}_{\cF}(\blank)\bigr)
\cong
\Hom_{A/\cF}\bigl(\pi_{\cF}(M),\blank\bigr)
\cong
\Hom_A\bigl(M,\omega_{\cF}(\blank)\bigr).
\]
By (3), $\Hom_A(M,\blank)$ sends $\cF$-closed modules to $\cG$-closed modules.
It follows from \ref{lem:adjunction-diagram} that
$\overline G$ is induced by $\blank\otimes_B M$, that is, $\overline G\cong \blank \otimes_{B/\cG} \pi (M)$.
Similarly, $\overline F\cong \blank \otimes_{A/\cF} \pi (N)$. 

(5) By \ref{prop:pi-A-ample},
$\pi_{\cG}(B)$ is fully $\cG'$-ample in
$\Mod B/\cG$. Since $\overline{G}$ is an equivalence
and
$\overline{G}\pi_{\cG}(B)\cong\pi_{\cF}(M)$,
the object $\pi_{\cF}(M)$ is fully $\cG'$-ample in
$\Mod A/\cF$.

(6) It follows from (1) and (2) that there is a right normalized Morita context $(A_{\cF},B_{\cG},M,N,\xi,\zeta)$ (see \cite[Def.\@ 2.3]{mullerQuotientCategoryMorita1974} for definition). 
Hence, \cite[Cor.\@ 9]{mullerQuotientCategoryMorita1974} shows that $Z(A_{\cF})\cong Z(B_{\cG})$.
\end{proof}

By \ref{cor:eta_A-induces-equi}, the equivalence $\QModF A\simeq \QModG B$ leads to the equivalence $\Mod A_{\cF}/\cF'\simeq \Mod B_{\cG}/\cG'$.
By applying \ref{thm:morita-type} to this equivalence, or more straightforwardly,
when $A$ and $B$ are closed with respect to their Gabriel topologies, \ref{thm:morita-type} admits the following simplified formulation; items that remain unchanged are omitted.

\begin{corollary}\label{cor:quot-equiv-conseq-closed}
  Let $A$ and $B$ be rings equipped with Gabriel topologies $\cF$ and $\cG$,
  respectively. Suppose that $A$ is $\cF$-closed and  $B$ is $\cG$-closed.
  Assume there is a pair of mutually quasi-inverse functors
  \begin{equation*}
    % \overline F\colon \QModF A \simeq \QModG B\reflectbox{$\colon$} \overline G.
    \overline F\colon \QModF A \simeq \QModG B\colon \overline G.
  \end{equation*}
  Let $M\defeq \omega_{\cF} \overline{G} \pi_{\cG}(B)$ and $N\defeq\omega_{\cG} \overline{F} \pi_{\cF}(A)$.
  Then the following hold.
  \begin{enumerate}[ref=\ref{cor:quot-equiv-conseq-closed}(\arabic*)]
    \item As rings, $B\cong\End_A(M)$ and $A\cong\End_B(N)$.
      Via these isomorphisms, $M$ is a $B$-$A$-bimodule
      and $N$ is an $A$-$B$-bimodule.
    \item\label{enum:quot-equiv-conseq-module} As bimodules, $N\cong\Hom_A(M, A)$
      and $M\cong\Hom_B(N, B)$.
    \item%\label{enum:quot-equiv-conseq-ample-closed}
      $\pi_{\cF}(M)$ is fully $\cG$-ample,
      and $\pi_{\cG}(N)$ is fully $\cF$-ample.
    \item $\overline F$ and $\overline G$ are induced by a Morita context
      $(A, B, M, N, \xi, \zeta)$.
    \item The centres $Z(A)$ and $Z(B)$ are isomorphic.
  \end{enumerate}
\end{corollary}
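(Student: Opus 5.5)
The plan is to reduce everything to \ref{thm:morita-type}, using the closedness hypotheses to identify $A_{\cF}$ with $A$ and $B_{\cG}$ with $B$. Since $A$ is $\cF$-closed, $\eta^A\colon A\to A_{\cF}$ is an isomorphism. Under this isomorphism the induced topology $\cF'$ corresponds to $\cF$: the kernel and cokernel of $\eta^A$ vanish, so \ref{lem:top-change-of-rings} gives equality. The same holds for $B$ and $\cG$.

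Next, $\omega_{\cF}=i_{\cF}\bar a_{\cF}$. Hence the modules $M=\omega_{\cF}\overline G\pi_{\cG}(B)$ and $N=\omega_{\cG}\overline F\pi_{\cF}(A)$ of the corollary are exactly the modules $M$, $N$ of \ref{thm:morita-type}, regarded as $A$- and $B$-modules. With these identifications, items (1), (2), (3) and (6) of \ref{thm:morita-type} give (1), (2), (3) and (5) directly. For (1), $B\cong B_{\cG}\cong\End_A(M)$. For (2), $\Hom_{A_{\cF}}(M,A_{\cF})=\Hom_A(M,A)$, since $M$ is closed and $A\cong A_{\cF}$. For (3), full $\cG'$-ampleness becomes full $\cG$-ampleness. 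For (5), the centres are those of $A_{\cF}\cong A$ and $B_{\cG}\cong B$.

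The real content is (4), which is to produce the Morita context. I would define $\xi\colon N\otimes_B M\to A$ as the composite of $\theta\otimes\id_M$, where $\theta\colon N\cong\Hom_A(M,A)$ is the isomorphism from (2), followed by evaluation $\Hom_A(M,A)\otimes_BM\to A$. Symmetrically, $\zeta\colon M\otimes_AN\to B$ uses $M\cong\Hom_B(N,B)$.

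The main obstacle is checking the associativity conditions $[m',n]m=m'(n,m)$ and $n[m,n']=(n,m)n'$. This requires that the two identifications in (2) are compatible, i.e.\ that they come from a single pairing. I would trace both back to the adjunction isomorphisms
\[\Hom_{A/\cF}\bigl(\overline G\pi(B),\pi(A)\bigr)\cong\Hom_{B/\cG}\bigl(\pi(B),\overline F\pi(A)\bigr),\]
where the bimodule actions come from composition in the quotient categories via (1). Both brackets are then composition of morphisms between $\pi(A)$ and $\pi(M)\cong\overline G\pi(B)$, transported along the equivalence. Associativity of composition should then yield both identities; this is the standard fact that a right normalized Morita context exists, which \ref{thm:morita-type}(6) already invokes from M\"uller.

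Given the context, I would apply \ref{cor:ample-implies-context-equiv}: $\theta$ is an isomorphism and $A$ is $\cF$-closed, so $\coker(\xi)$ is $\cF$-torsion. By \ref{thm:morita-type}(4), $\blank\otimes_BM$ induces the equivalence $\overline G$. Then \ref{prop:tensor-equiv-implies-context-equiv}(3) shows that $\coker(\zeta)$ is $\cG$-torsion and that the equivalence is induced by $(A,B,M,N,\xi,\zeta)$, which proves (4).
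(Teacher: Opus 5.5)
Your handling of (1), (2), (3) and (5) matches the paper: since $\eta^A$ and $\eta^B$ are isomorphisms, these items are read off from \ref{thm:morita-type}. For (4) there is one flaw to repair, and your endgame differs from the paper's.

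\textbf{The flaw.} You define $\zeta$ from an isomorphism $M\cong\Hom_B(N,B)$ taken from (2). Because $B\cong\End_A(M)$ acts faithfully on $M$, the identity $[m',n]m=m'(n,m)$ forces $[m',n]$ to be the endomorphism $m\mapsto m'\,\theta(n)(m)$. So exactly one $\zeta$ satisfies associativity, and a bimodule isomorphism $M\cong\Hom_B(N,B)$ chosen independently of $\theta$ need not produce it. The isomorphism built in \ref{thm:morita-type}(2) depends on the chosen adjunction $\overline F\dashv\overline G$. Twisting that adjunction by multiplication by a central unit $z\neq 1$ of $A$ gives another valid adjunction. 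It changes the isomorphism by right multiplication by a central unit on $M$, and this breaks associativity; already $A=B=M=N=k$ with trivial topologies and $z\in k^{\times}\setminus\{1\}$ shows it. Your ``trace back to the adjunction'' argument would need both adjunctions to come from a single adjoint equivalence, and you do not check this.

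\textbf{The fix.} Do what the paper does: set $\zeta(m\otimes n)\defeq m\,\theta(n)(\blank)\in\End_A(M)\cong B$. Both associativity identities are then one-line checks from $\theta$ being an $A$-$B$-bimodule map. This is also what your fallback to the standard right normalized context actually gives, so drop the ``symmetric'' definition.

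\textbf{The endgame.} With this $\zeta$, your argument is correct and shorter than the paper's. The paper proves that $m\mapsto\zeta(m\otimes\blank)$ is the isomorphism $M\cong\Hom_B(N,B)$, using \ref{enum:quot-equiv-conseq-hom} and the closedness of $A$ and $M$. It then applies \ref{cor:ample-implies-context-equiv} on both sides and concludes with \ref{prop:def-induce-equiv}. You apply \ref{cor:ample-implies-context-equiv} only to $\xi$, then part (3) of \ref{prop:tensor-equiv-implies-context-equiv}. That yields $\coker(\zeta)$ $\cG$-torsion and the inducedness, using nothing about $\zeta$ beyond the Morita-context axioms. The paper's longer route buys one extra fact: the context is normalized on both sides, meaning both pairings themselves induce the isomorphisms in (2).
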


\begin{proof}
  It remains to prove (4) and the others follow directly from \ref{thm:morita-type}.

  (4) By (1), we identify $B$ with $\End_A(M)$.
  Choose a bimodule isomorphism $\theta\colon N\cong\Hom_A(M,A)$ as in (2).
  Define $\xi\colon N \otimes_B M \to A, n\otimes m \mapsto \theta(n)(m)$, and
  %for $m\in M$ and $n\in N$.
  define $\zeta\colon M \otimes_A N \to B, m\otimes n \mapsto \zeta(m\otimes n)\in B$ such that
  \[
    \zeta(m\otimes n)(m')=m\,\theta(n)(m'), \text{ for all }m'\in M.
  \]
  It can be verified that $(A,B,M,N,\xi,\zeta)$ forms a Morita context.
  %(\cite[pp.\@ 164--166]{jacBasicAlgebraII1989}).
  The induced map $N\to \Hom_A(M,A)$, given by $n\mapsto \xi(n\otimes\blank)$, is $\theta$ and is therefore an isomorphism.

  Since $A$ and $M$ are $\cF$-closed, it follows from \ref{enum:quot-equiv-conseq-hom} that
  \begin{align*}
    M
    &\cong \Hom_A(A, M)\\
    &\cong \Hom_B(\Hom_A(M,A), \Hom_A(M, M))\\
    &\cong \Hom_B(N, B),
  \end{align*}
  %One can verify that the composite 
  which induces precisely the isomorphism $M\to\Hom_B(N, B), m \mapsto \zeta(m\otimes\blank)$.
  % Therefore, the composite $m\in M\mapsto\zeta(m\otimes\blank)\in\Hom_B(N, B)$ is an isomorphism.

  It follows from \ref{enum:quot-equiv-conseq-functor} that $\overline F\cong \blank\otimes_{A/\cF}\pi (N)$ and $\overline G\cong \blank \otimes_{B/\cG}\pi (M)$.
  Then, \ref{cor:ample-implies-context-equiv} yields that $\coker (\xi)$ is $\cF$-torsion and $\coker (\zeta)$ is $\cG$-torsion.
  Therefore, $\overline F$ and $\overline G$ are induced by a Morita context
  $(A, B, M, N, \xi, \zeta)$ by \ref{prop:def-induce-equiv}.
\end{proof}

\ref{cor:quot-equiv-conseq-closed} motivates the following notion of a modulo-torsion-invertible bimodule,
which is first defined in the graded case in \cite[Def.\@ 2.17]{liNoncommutativeResolutionsASGorenstein2026}.

\begin{definition}\label{dfn:invert-bimod}
  Let $A$ and $B$ be rings equipped with Gabriel topologies $\cF$ and $\cG$, respectively.
  A bimodule ${}_BM_A$ is said to be $\cG$-$\cF$-\dfn{modulo-torsion-invertible}
  if there exists a Morita context $(A, B, M, N, \xi, \zeta)$
  such that the conditions in \ref{thm:equiv-by-Morita-context} hold.
\end{definition}

\begin{proposition}
  Let $A_1,A_2,A_3$ be rings equipped with Gabriel topologies
  $\mathcal F_1,\mathcal F_2,\mathcal F_3$, respectively.
  Suppose that $A_i$ is
  $\mathcal F_i$-closed, for $i=1, 2, 3$.
  \begin{enumerate}
    \item The map $_{A_1}(N_1)_{A_2}\mapsto \blank\otimes_{\QModF[1]{A_1}}\pi(N_1)$
      defines a bijection between the class of isomorphism classes of $\cF_1$-$\cF_2$-modulo-torsion-invertible bimodules 
      % with $\depth^{\cF_2}M_1\geqslant 2$
      that are $\cF_2$-closed
      and the class of natural isomorphism classes of functors giving equivalences between $\QModF[1]{A_1}$ and $\QModF[2]{A_2}$.
      Its inverse is given by the map $\overline{F_1}\mapsto \omega_{\cF_2}\overline{F_1}\pi_{\cF_1}(A_1)$.
    \item 
      Let
      \[
      \QModF[1]{A_1}\xrightarrow{\overline{F_1}}
      \QModF[2]{A_2}\xrightarrow{\overline{F_2}}
      \QModF[3]{A_3}
      \]
      be equivalences corresponding to $_{A_1}(N_1)_{A_2}$ and $_{A_2}(N_2)_{A_3}$ under the bijection given in (1).
      Then $\overline{F_2}\,\overline{F_1}$ corresponds to $\omega_{\cF_3}\pi_{\cF_3}(N_1\otimes_{A_2}N_2)$.
  \end{enumerate}
\end{proposition}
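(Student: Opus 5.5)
The plan is to assemble part (1) from \ref{thm:equiv-by-Morita-context} and \ref{cor:quot-equiv-conseq-closed}, and then derive part (2) from the description of the inverse map.

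\emph{The forward map is well defined.} Let ${}_{A_1}(N_1)_{A_2}$ be $\cF_1$-$\cF_2$-modulo-torsion-invertible. Then there is a Morita context $(A_1,A_2,N_1,M_1,\xi,\zeta)$ satisfying (1a)--(2b) of \ref{thm:equiv-by-Morita-context}. That theorem says $N_1$ is compatible with the topologies and $\blank\otimes_{\QModF[1]{A_1}}\pi(N_1)$ is an equivalence. A bimodule isomorphism $N_1\cong N_1'$ gives a natural isomorphism of the tensor functors, and this descends to the induced functors by the uniqueness in \ref{prop:induced-functors}.

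\emph{The backward map is well defined.} Let $\overline{F_1}$ be an equivalence, with quasi-inverse $\overline{G_1}$. By \ref{cor:quot-equiv-conseq-closed}, which needs $A_1,A_2$ closed, the module $N\defeq\omega_{\cF_2}\overline{F_1}\pi_{\cF_1}(A_1)$ has the following properties:
\begin{itemize}
  \item it is an $A_1$-$A_2$-bimodule, via $A_1\cong\End_{A_1/\cF_1}(\pi(A_1))\cong\End_{A_2}(N)$;
  \item it is $\cF_2$-closed, being in the image of $\omega_{\cF_2}$;
  \item it sits in a Morita context inducing $\overline{F_1}$.
\end{itemize}
By \ref{prop:def-induce-equiv} and \ref{thm:equiv-by-Morita-context}, the last point means exactly that $N$ is modulo-torsion-invertible. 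A natural isomorphism $\overline{F_1}\cong\overline{F_1}'$ gives an isomorphism $\omega\overline{F_1}\pi(A_1)\cong\omega\overline{F_1}'\pi(A_1)$. This isomorphism respects the left $A_1$-action, because that action is obtained by applying $\omega\overline{F_1}$ to endomorphisms of $\pi(A_1)$, and naturality commutes with these.

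\emph{The two maps are mutually inverse.} Starting from $\overline{F_1}$, \ref{cor:quot-equiv-conseq-closed}(4) gives $\overline{F_1}\cong\blank\otimes_{\QModF[1]{A_1}}\pi(N)$. Conversely, start from a closed invertible $N_1$ and set $\overline{F_1}=\blank\otimes\pi(N_1)$. Then
\[
\omega_{\cF_2}\overline{F_1}\pi_{\cF_1}(A_1)\cong\omega_{\cF_2}\pi_{\cF_2}(A_1\otimes_{A_1}N_1)=\omega_{\cF_2}\pi_{\cF_2}(N_1)\cong N_1,
\]
and the last isomorphism is $(\eta^{N_1})^{-1}$ because $N_1$ is $\cF_2$-closed.

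\emph{Main obstacle.} I expect the most delicate step to be checking that this last chain is an isomorphism of \emph{bimodules}. The left $A_1$-action on the left-hand side is transported through $A_1\cong\End(\pi(A_1))$ and the functor $\overline{F_1}$. For $a\in A_1$, left multiplication $l_a$ on $A_1$ is sent by $\blank\otimes_{A_1}N_1$ to left multiplication by $a$ on $N_1$. The isomorphism $\pi F\cong\overline F\pi$ is natural in the module variable, so it intertwines $\overline{F_1}\pi(l_a)$ with $\pi(a\cdot\blank)$. The unit $\eta$ is natural too, which carries this intertwining over to $N_1$. The right $A_2$-action causes no difficulty, since everything is a map of right $A_2$-modules.

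\emph{Part (2).} The composite $\overline{F_2}\,\overline{F_1}$ is again an equivalence, so by (1) it corresponds to $\omega_{\cF_3}\overline{F_2}\,\overline{F_1}\pi_{\cF_1}(A_1)$. We have $\overline{F_1}\pi(A_1)\cong\pi_{\cF_2}(N_1)$, and since $\overline{F_2}$ is induced by $\blank\otimes_{A_2}N_2$, also $\overline{F_2}\pi_{\cF_2}(N_1)\cong\pi_{\cF_3}(N_1\otimes_{A_2}N_2)$. Applying $\omega_{\cF_3}$ gives the claimed module. The left $A_1$-structure matches by the same naturality argument as in part (1).
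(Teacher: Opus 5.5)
Your proposal is correct and follows the paper's route. Part (1) comes from \ref{cor:quot-equiv-conseq-closed} together with \ref{thm:equiv-by-Morita-context}: the paper just cites the corollary, while you also spell out well-definedness on isomorphism classes and check that $\omega_{\cF_2}\pi_{\cF_2}(N_1)\cong N_1$ respects the left $A_1$-action. Part (2) is the same computation $\overline{F_2}\,\overline{F_1}\pi_{\cF_1}(A_1)\cong\pi_{\cF_3}(N_1\otimes_{A_2}N_2)$ followed by $\omega_{\cF_3}$. One notational slip: since ${}_{A_1}(N_1)_{A_2}$ plays the role of ${}_BM_A$ in the definition, the Morita context should be written $(A_2,A_1,N_1,M_1,\xi,\zeta)$.
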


\begin{proof}
  (1) It has already been proved in \ref{cor:quot-equiv-conseq-closed}.

  (2) Since $\overline{F_i}$ is induced by $\blank\otimes_{A_i}N_i$,
  for every $X_1\in\Mod{A_1}$ there are natural isomorphisms
  \begin{align*}
    \overline{F_2}\,\overline{F_1}\pi_{\cF_1}(X_1)
    &\cong\overline{F_2}\pi_{\cF_2}(X_1\otimes_{A_1}N_1)\\
    &\cong\pi_{\cF_3}\bigl((X_1\otimes_{A_1}N_1)\otimes_{A_2}N_2\bigr)\\
    &\cong\pi_{\cF_3}\bigl(X_1\otimes_{A_1}(N_1\otimes_{A_2}N_2)\bigr).
  \end{align*}
  Taking $X_1=A_1$ and applying $\omega_{\cF_3}$ yields the asserted bimodule by (1).
  % Naturality with respect to left multiplication on $A_1$ preserves the left action.
\end{proof}

Suppose that $A$ is $\cF$-closed.
Let $\Pic_{\cF}(A)$ denote the set of isomorphism classes of $\cF$-$\cF$-modulo-torsion-invertible
$A$-$A$-bimodules that are $\cF$-closed as right $A$-modules.
The preceding proposition equips this set with a group structure, with multiplication
\[[M_1]\cdot[M_2]=[\omega\pi(M_1\otimes_A M_2)],\]
identity element $[A]$, and inverse, by \ref{enum:quot-equiv-conseq-functor} and \ref{enum:quot-equiv-conseq-module},
\[[M]^{-1}=[\Hom_A(M, A)].\]
When $\cF$ is the trivial topology, this definition recovers the usual Picard group of invertible $A$-$A$-bimodules.

  % !TeX root = ../main.tex

\section{Application to the noncommutative Auslander theorem}
\label{sec:application}

Let $k$ be a field, $S$ a $k$-algebra, and $H$ a finite-dimensional Hopf algebra over $k$.
Suppose that $S$ is a left $H$-module algebra.
% The Auslander map is
% \[S\#H\to\End_{S^H}(S),\quad s\#h\mapsto (x\mapsto s(h\rightharpoonup x)).\]
% The noncommutative Auslander theorem is said to hold for the action
% if the Auslander map is bijective.

Choose $0\neq t\in\int^l_H$.
There is a Morita context
\begin{equation}\label{eq:Auslander-context}
  \bigl(A\defeq S^H, B\defeq S\#H, M\defeq{}_{S\#H}S_{S^H}, N\defeq{}_{S^H}S_{S\#H}, \xi, \zeta\bigr),
\end{equation}
where $S$ is an $S^H$-bimodule via left and right multiplication, respectively,
and a left $S\#H$-module via the standard action
\[
  (s\#h)\cdot s' = s(h\rightharpoonup s').
\]
% and the right $S\#H$-module structure of $S$ is `twisted'.
For further details, we refer to \cite[Sect.\@ 4.5]{monHopfAlgebrasTheir1993}.
In this context, the canonical map $B\to\End_A(M)$ induced by the left $B$-action on $M$ is the Auslander map \ref{eq:auslander-map}.
Recall that the noncommutative Auslander theorem is said to hold for this action if the Auslander map is bijective.

We apply our results to the canonical map $B\to\End_A(M)$
associated with a general Morita context.
We first show that the Morita context induces an equivalence
between the quotient categories determined by the Gabriel topologies
generated by the trace ideals.
We then establish criteria for the injectivity and bijectivity of the canonical map.
Finally, we specialize these results to Hopf actions on AS-regular algebras.

% 这部分是Morita I的推论, 证明迹理想决定的商范畴是等价的
Let $A$ be a ring and let $T$ be a two-sided ideal of $A$.
Let $\cF(T)$ denote the weakest Gabriel topology on $A$ containing $T$.
Explicitly, $\cF(T)$ consists of the right ideals $I$ such that, for any $X\in\Mod A$, $\Ann_X(T)=0$ implies $\Ann_X(I)=0$, 
and an $A$-module $X$ is $\cF(T)$-torsion-free if and only if $\Ann_X(T)=0$
(see \cite[Prop.\@ 1]{mullerQuotientCategoryMorita1974} and \cite[Chap.\@ IX, Exer.\@ 14]{stenstromRingsQuotients1975}).
If $T$ is finitely generated as a right ideal,
then $\{T^n\mid n\in\NN\}$ is a basis of $\cF(T)$ (\cite[Chap.\@ VI, Prop.\@ 6.10]{stenstromRingsQuotients1975}).
If $T$ is idempotent, then $\cF(T)$ consists of all right ideals containing $T$
(\cite[Chap.\@ VI, Prop.\@ 6.11]{stenstromRingsQuotients1975}).

M\"uller proved that the Hom functors associated with a Morita context
induce mutually quasi-inverse equivalences between the categories of closed modules
determined by its trace ideals (\cite[Thm.\@ 3]{mullerQuotientCategoryMorita1974}).
The following proposition identifies this equivalence
with the one induced by the original Morita context.
% Let $(A,B,M,N,\xi,\zeta)$ be a Morita context.
% Let $S\leq A$ and $T\leq B$ denote the trace ideals,
% and $\cF_S$ and $\cG_T$ denote the corresponding Gabriel topologies on $A$ and $B$ respectively.
% M\"uller proves that $\Hom_A(M,\blank)$ and $\Hom_B(N,\blank)$ are mutually quasi-inverse functors
% between $\Mod (A,\cF_S)$ and $\Mod (B, \cG_T)$ (\cite[Thm.\@ 3]{mullerQuotientCategoryMorita1974}).
% We are going to show further that the equivalence is induced by the Morita context.

\begin{proposition}\label{prop:trace-ideal-equiv}
  Let $(A,B,M,N,\xi,\zeta)$ be a Morita context,
  and set $T_A=\im(\xi)$ and $T_B=\im(\zeta)$.
  Then this Morita context induces an equivalence 
  \[\overline{F} \colon \QModFT{A}\simeq\QModFT{B} \, \colon \overline{G}.\]
  Moreover, the following diagram commutes.
  \[
    \begin{tikzcd}[column sep=huge]
      \QModFT{A} & \QModFT{B} & \QModFT{A}\\
      \Mod(A,\cF(T_A)) & \Mod(B, \cF(T_B)) & \Mod(A, \cF(T_A))
      \arrow[from=1-1,]{1-2}[]{\overline{F}}
      \arrow[from=1-2,]{1-3}[]{\overline{G}}
      \arrow[from=2-1,]{2-2}[]{\Hom_A(M, \blank)}
      \arrow[from=2-2,]{2-3}[]{\Hom_B(N, \blank)}
      \arrow[from=1-1,]{2-1}[]{\overline{a}}
      \arrow[from=1-2,]{2-2}[]{\overline{a}}
      \arrow[from=1-3,]{2-3}[]{\overline{a}}
    \end{tikzcd}
  \]
\end{proposition}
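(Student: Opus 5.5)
We apply \ref{thm:equiv-by-Morita-context} with $\cF=\cF(T_A)$ and $\cG=\cF(T_B)$, and then identify the resulting functors on closed modules via \ref{lem:adjunction-diagram}.

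Conditions (2a) and (2b) are immediate. Indeed, $\coker(\xi)=A/T_A$ is annihilated by $T_A\in\cF(T_A)$, so it is $\cF(T_A)$-torsion. The same argument shows that $\coker(\zeta)=B/T_B$ is $\cF(T_B)$-torsion.

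For (1a) we must show that $Y\otimes_B M$ is $\cF(T_A)$-torsion whenever $Y$ is $\cF(T_B)$-torsion. Since $\cF(T_A)$ is the weakest Gabriel topology containing $T_A$, it suffices to show that $\Ann_X(T_A)=0$ forces every $A$-morphism $Y\otimes_B M\to X$ to vanish. By adjunction, such a morphism is $g\colon Y\to\Hom_A(M,X)$. We claim $\Hom_A(M,X)$ is $\cF(T_B)$-torsion-free, i.e.\ $\Ann_{\Hom_A(M,X)}(T_B)=0$. Suppose $f\colon M\to X$ satisfies $f[m,n]=0$ for all $m,n$. Then for all $m,m',n$,
\[
f(m)(n,m')=f(m(n,m'))=f([m,n]m')=0,
\]
so $f(m)\in\Ann_X(T_A)=0$, hence $f=0$. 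Thus $g$ maps the torsion module $Y$ into a torsion-free module, so $g=0$. Condition (1b) follows symmetrically. By \ref{thm:equiv-by-Morita-context}, the context therefore induces an equivalence $\overline F=\blank\otimes_{A/\cF}\pi(N)$ with quasi-inverse $\overline G=\blank\otimes_{B/\cG}\pi(M)$.

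For the diagram, the equivalence $\overline F\colon\QModFT A\to\QModFT B$ is induced by $\blank\otimes_A N$, whose right adjoint is $\Hom_B(N,\blank)$. Applying \ref{lem:adjunction-diagram} (1)$\Rightarrow$(3) to the pair $(\blank\otimes_A N,\ \Hom_B(N,\blank))$, the right adjoint of $\overline F$, namely $\overline G$, satisfies
\[
\omega\overline G\cong\Hom_B(N,\blank)\,\omega .
\]
Since $\omega=i\bar a$, this says $\bar a\,\overline G\cong\Hom_B(N,\blank)\circ\bar a$, which is the right-hand square. The same lemma also shows that $\Hom_B(N,\blank)$ sends closed modules to closed modules, so the bottom arrows are well defined. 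The left square follows symmetrically from $\overline G$ being induced by $\blank\otimes_B M$ with right adjoint $\Hom_A(M,\blank)$, which gives $\omega\overline F\cong\Hom_A(M,\blank)\,\omega$.

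The main point requiring care is (1a), specifically the reduction through $\cF(T)$-torsion-freeness, i.e.\ the characterization via $\Ann_X(T)=0$. The rest is bookkeeping with earlier results.
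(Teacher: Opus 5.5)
Your proposal is correct and follows essentially the paper's proof. You show that $\Hom$ into a torsion-free module is torsion-free (you do $\Hom_A(M,X)$; the paper does the mirror case $\Hom_B(N,Y)$ via $T_AN=NT_B$), combine this with tensor--Hom adjunction to get (1a)/(1b), and then apply \ref{thm:equiv-by-Morita-context}. The only variation is in the diagram: you read off $\omega\overline{F}\cong\Hom_A(M,\blank)\,\omega$ directly from the uniqueness-of-adjoints argument in \ref{lem:adjunction-diagram}, whereas the paper first rewrites $\overline{F}\cong\pi\Hom_A(M,\overline{a}(\blank))$ via \ref{enum:tensor-equiv-implies-context-equiv-ii} and uses the lemma only to see that closed modules are preserved. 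This is a harmless shortcut.
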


\begin{proof} By the associativity conditions of a Morita context, $T_A N=N T_B$. Suppose that $Y_B$ is $\cF(T_B)$-torsion-free and that $f\in\Hom_B(N, Y)$ satisfies $f \, T_A=0$.
Then, for any $n\in N$, $f(n)T_B\subseteq f N T_B=f \, T_A N=0$. Hence $f=0$. It follows that $\Hom_B(N, Y)$ is $\cF(T_A)$-torsion-free (see also the proof of \cite[Thm.\@ 3]{mullerQuotientCategoryMorita1974}).

  Suppose that $X_A$ is $\cF(T_A)$-torsion.
  For any $\cF(T_B)$-torsion-free $B$-module $Y$,
  \[\Hom_B(X\otimes_A N, Y)\cong\Hom_A(X, \Hom_B(N, Y))=0,\]
  as $\Hom_B(N, Y)$ is $\cF(T_A)$-torsion-free.
  Hence, $X\otimes_A N$ is $\cF(T_B)$-torsion. Therefore, $\blank\otimes_A N$ sends $\cF(T_A)$-torsion modules to $\cF(T_B)$-torsion modules.
  
  By symmetry, $\blank\otimes_B M$ sends $\cF(T_B)$-torsion modules to $\cF(T_A)$-torsion modules.

  Moreover, it is clear that $\coker(\xi)=A\div T_A$ and $\coker(\zeta)=B\div T_B$
  are $\cF(T_A)$-torsion and $\cF(T_B)$-torsion, respectively.
  Hence, by \ref{thm:equiv-by-Morita-context}, the Morita context induces a mutually quasi-inverse equivalence
  \[
    \overline F=\blank\otimes_{\QModFT{A}}\pi(N)
    \colon\QModFT{A}\simeq\QModFT{B} \colon
    \blank\otimes_{\QModFT{B}}\pi(M)=\overline G.
  \]

  By \ref{enum:tensor-equiv-implies-context-equiv-ii} and adjunction,
  there are natural isomorphisms
  \[\overline{F}\cong\pi_{\cF(T_B)}\Hom_{\QModFT{A}}(\pi(M),\blank)
  \cong\pi_{\cF(T_B)}\Hom_{A}(M,\overline{a}(\blank)).\]
  By \ref{lem:adjunction-diagram}, $\Hom_A(M, \blank)$ sends $\cF(T_A)$-closed modules
  to $\cF(T_B)$-closed modules.
  It follows that $\overline{a}\overline{F}\cong\Hom_A(M,\overline{a}(\blank))$.
  The other natural isomorphism follows by symmetry.
\end{proof}

% 这部分是Morita II的推论, 给出Auslander映射是双射的充要条件
The following proposition provides criteria for the canonical map $B\to\End_A(M)$
to be injective or bijective,
assuming that the Morita context induces an equivalence between quotient categories.
\begin{proposition}\label{cor:auslander-context}
  Let $(A,B,M,N,\xi,\zeta)$ be a Morita context,
  and let $\cF$ and $\cG$ be Gabriel topologies on $A$ and $B$, respectively.
  Denote by $\psi$ the canonical map $B\to\End_A(M)$.
  Assume that the Morita context induces an equivalence between
  $\QModF A$ and $\QModG B$,
  and that $M$ is $\cF$-torsion-free. Then the following statements hold.
  \begin{enumerate}
    \item $\psi$ is injective if and only if $B$ is $\cG$-torsion-free.
    \item $\psi$ is bijective if $B$ is $\cG$-closed.
  \end{enumerate}
  Moreover, if $M$ is $\cF$-closed, then
  \begin{enumerate}[resume*]
    \item $\psi$ is bijective if and only if $B$ is $\cG$-closed.
  \end{enumerate}
\end{proposition}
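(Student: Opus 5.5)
The plan is to factor $\psi$ through the unit $\eta^B\colon B\to\omega_{\cG}\pi_{\cG}(B)=B_{\cG}$ and then use the ring isomorphism furnished by the equivalence. Since the Morita context induces an equivalence, \ref{prop:tensor-equiv-implies-context-equiv} gives $\overline G=\blank\otimes_{\QModG B}\pi(M)$ with quasi-inverse $\pi_{\cG}\Hom_{A/\cF}(\pi_{\cF}(M),\blank)$. In particular $\overline G$ is fully faithful, so
\[
B_{\cG}=\End_{B/\cG}\bigl(\pi_{\cG}(B)\bigr)\cong\End_{A/\cF}\bigl(\overline G\pi_{\cG}(B)\bigr)\cong\End_{A/\cF}\bigl(\pi_{\cF}(M)\bigr).
\]
There is also a natural map $\rho\colon\End_A(M)\to\End_{A/\cF}(\pi_{\cF}(M))$, $f\mapsto\pi_{\cF}(f)$. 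The first step is to check that $\rho\comp\psi$ equals the composite of $\eta^B$ with this isomorphism. Concretely, $b\in B$ acts on $\pi_{\cF}(B\otimes_B M)\cong\pi_{\cF}(M)$ by $\pi_{\cF}(b\cdot\blank)$, and $\overline G(\pi_{\cG}(l_b))=\pi_{\cF}(l_b\otimes\id_M)$ for left multiplication $l_b$.

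Next I analyse $\rho$. Since $M$ is $\cF$-torsion-free, $\eta^M\colon M\to\omega\pi(M)$ is injective by \ref{lem:kernel-eta}, and its cokernel is $\cF$-torsion. Under the adjunction,
\[
\End_{A/\cF}\bigl(\pi(M)\bigr)\cong\Hom_A\bigl(M,\omega\pi(M)\bigr),
\]
and $\rho(f)$ corresponds to $\eta^M f$. Hence $\rho$ is injective.

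Part (1) then follows: $\psi$ is injective if and only if $\rho\psi$ is injective, if and only if $\eta^B$ is injective, if and only if $\tau_{\cG}(B)=\ker\eta^B=0$ by \ref{lem:kernel-eta}.

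For (2), assume $B$ is $\cG$-closed, so $\eta^B$ is bijective. Then $\rho\psi$ is bijective, so $\rho$ is surjective, hence bijective since it is already injective, and therefore $\psi$ is bijective.

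For (3), if $M$ is $\cF$-closed then $\eta^M$ is an isomorphism. So $\rho$ is an isomorphism $\End_A(M)\cong\Hom_A(M,\omega\pi M)$, which is $\End_{A/\cF}(\pi M)$. Thus $\psi$ is bijective if and only if $\eta^B$ is bijective, that is, if and only if $B$ is $\cG$-closed.

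I expect the main obstacle to be the compatibility check in the first step. One must verify that the abstract isomorphism $B_{\cG}\cong\End_{A/\cF}(\pi M)$, built from the natural isomorphism $\pi_{\cF}(\blank\otimes_B M)\cong\overline G\pi_{\cG}$ and the identification $B\otimes_B M\cong M$, really intertwines $\eta^B$ with $\rho\psi$, and not with some twist of it. I would handle this by naturality, evaluating on the left-multiplication endomorphisms $l_b$ of $B_B$, together with the description of $\eta^B$ as $b\mapsto\pi(l_b)$ from the proof of \ref{prop:pi-A-ample}. Everything else is formal.
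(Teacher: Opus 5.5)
Your proposal is correct and takes essentially the same route as the paper. The paper likewise writes $\eta^B_{\cG}=\theta^{-1}\pi_{\cF}\psi$, where $\theta\colon B_{\cG}\cong\End_{A/\cF}(\pi_{\cF}(M))$ is induced by $\overline{G}$; it checks this on left multiplications $l_b$, after assuming $\pi_{\cF}G=\overline{G}\pi_{\cG}$ without loss of generality. It then identifies $\pi_{\cF}\colon\End_A(M)\to\End_{A/\cF}(\pi_{\cF}(M))$ with $(\eta^M_{\cF})_*$, which gives injectivity for $\cF$-torsion-free $M$ and bijectivity for $\cF$-closed $M$, and (1)--(3) follow exactly as you argue.
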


\begin{proof}
  Let $G=\blank\otimes_B M$, and let $\overline{G}$ denote the  equivalence $\QModG B\simeq\QModF A$ induced by $G$.
  Without loss of generality, assume that $\pi_{\cF} G=\overline{G}\pi_{\cG}$.

  Recall that $\eta_{\cG}^B$ is injective if and only if $B$ is $\cG$-torsion-free (\ref{lem:kernel-eta}),
  and is bijective if and only if $B$ is $\cG$-closed.

  Since $\overline{G}(\pi_{\cG}(B))\cong\pi_{\cF}(M)$, the functor $\overline{G}$ induces a natural ring isomorphism
  \[\theta\colon B_{\cG}=\End_{\QModG B}(\pi_{\cG}(B))\cong \End_{\QModF A}(\pi_{\cF}(M)).\]
   Denote left multiplication by $b \in B$ by $l_b\colon B\to B$.
  Under the identification $B_{\cG}=\End_{\QModG B}(\pi_{\cG}(B))$, the element $\eta_{\cG}^B(b)$ corresponds to $\pi_{\cG}(l_b)$.
  It follows that $\theta\eta_{\cG}^B(b)=\overline{G}\pi_{\cG}(l_b)=\pi_{\cF}G(l_b)=\pi_{\cF}(\psi(b))$,
  and hence
  \begin{equation}\label{eq:cor-auslander-context}
    \eta_{\cG}^B=\theta^{-1}\pi_{\cF}\psi.
  \end{equation}

  Note that the following diagram commutes by the adjunction.
  \[
    \begin{tikzcd}
      \Hom_A(M, M) & \Hom_A(M, M_{\cF})\\
      \End_A(M) & \End_{\QModF A}(\pi_{\cF}(M))
      \arrow[from=1-1,]{1-2}[]{(\eta_{\cF}^M)_*}
      \arrow[from=2-1,]{2-2}[]{\pi_{\cF}}
      \arrow[from=1-1,]{2-1}[]{}[swap,]{=}
      \arrow[from=1-2,]{2-2}[]{\cong}[swap,]{\textup{adj}}
    \end{tikzcd}
  \]
  Recall that $\eta_{\cF}^M$ is injective if and only if $M$ is $\cF$-torsion-free,
  and is bijective if and only if $M$ is $\cF$-closed. 
  Therefore, $\pi_{\cF}\colon\End_A(M)\to\End_{\QModF A}(\pi_{\cF}(M))$ is injective if $M$ is $\cF$-torsion-free,
  and it is bijective if $M$ is $\cF$-closed.
  Assertions (1)--(3) follow from \ref{eq:cor-auslander-context}.
\end{proof}

% 应用到迹理想拓扑下我们能给出对于任意Morita context, Auslander映射是双射的条件
Taking $\cF=\cF(T_A)$ and $\cG=\cF(T_B)$ in the preceding proposition yields the following corollary.
\begin{corollary}\label{cor:auslander-trace-context}
  Let $(A,B,M,N,\xi,\zeta)$ be a Morita context,
  and set $T_A=\im(\xi)$ and $T_B=\im(\zeta)$.
  Denote by $\psi$ the canonical map $B\to\End_A(M)$.
  Suppose that $M$ is $\cF(T_A)$-torsion-free. Then the following statements hold.
  \begin{enumerate}
    \item $\psi$ is injective if and only if $B$ is $\cF(T_B)$-torsion-free.
    \item $\psi$ is bijective if $B$ is $\cF(T_B)$-closed.
  \end{enumerate}
  Moreover, if $M$ is $\cF(T_A)$-closed, then
  \begin{enumerate}[resume*]
    \item $\psi$ is bijective if and only if $B$ is $\cF(T_B)$-closed.
  \end{enumerate}
\end{corollary}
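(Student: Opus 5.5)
This is a direct specialization: I will apply \ref{cor:auslander-context} with $\cF=\cF(T_A)$ and $\cG=\cF(T_B)$. The only work is to check its hypothesis, namely that the Morita context induces an equivalence between $\QModFT{A}$ and $\QModFT{B}$. \ref{prop:trace-ideal-equiv} gives exactly this for the trace-ideal topologies.

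For completeness, I recall how that hypothesis was verified there via \ref{thm:equiv-by-Morita-context}. Conditions (2a) and (2b) hold because $\coker(\xi)=A/T_A$ is annihilated by $T_A$, hence $\cF(T_A)$-torsion, and similarly $\coker(\zeta)=B/T_B$ is $\cF(T_B)$-torsion. For (1b), the identity $T_AN=NT_B$ shows that $\Hom_B(N,Y)$ is $\cF(T_A)$-torsion-free whenever $Y$ is $\cF(T_B)$-torsion-free. Tensor–Hom adjunction then gives $\Hom_B(X\otimes_AN,Y)=0$ for every $\cF(T_A)$-torsion $X$, so $X\otimes_AN$ is $\cF(T_B)$-torsion. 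Condition (1a) follows by symmetry.

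With the equivalence established, the standing assumption that $M$ is $\cF(T_A)$-torsion-free is exactly the remaining hypothesis of \ref{cor:auslander-context}. Its assertions (1) and (2) then give (1) and (2) here verbatim. When $M$ is in addition $\cF(T_A)$-closed, assertion (3) of \ref{cor:auslander-context} gives (3).

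I do not expect any real obstacle. The only point needing care is the torsion compatibility (1a)/(1b) for the weakest topologies containing the trace ideals, and \ref{prop:trace-ideal-equiv} already handles it.
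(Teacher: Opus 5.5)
Your proposal is correct and takes the same approach as the paper, which proves the corollary by combining \ref{prop:trace-ideal-equiv} (the induced equivalence for the trace-ideal topologies) with \ref{cor:auslander-context} for $\cF=\cF(T_A)$ and $\cG=\cF(T_B)$. Your recap of why these topologies satisfy the conditions of \ref{thm:equiv-by-Morita-context} also follows the paper's own argument for \ref{prop:trace-ideal-equiv}.
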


\begin{proof}
  The result follows by combining \ref{prop:trace-ideal-equiv,cor:auslander-context}.
\end{proof}

When $T_A=A$ and $T_B$ is idempotent,
the preceding criteria may be formulated in terms of grade.
This recovers \cite[Prop.\@ 2.9]{buchweitzMoritaContextsIdempotents2003}
and yields an alternative proof.
Recall that, if $A$ is a ring and $M$ is an $A$-module,
the grade of $M$ is defined by
\[
  j_A(M)\defeq\inf\{i\in\NN\mid\Ext_A^i(M, A)\neq 0\}.
\]

\begin{corollary}\label{cor:auslander-context-projective}
  Let $(A,B,M,N,\xi,\zeta)$ be a Morita context.
  Suppose that $T_A\defeq\im(\xi)=A$ and that $T_B\defeq\im(\zeta)$ is idempotent.
  Then the canonical ring morphism $B\to\End_A(M)$ is injective if and only if $j_B(B\div T_B)\geq 1$,
  and it is bijective if and only if $j_B(B\div T_B)\geq 2$.

  In particular, suppose that $B$ is a ring and that $e\in B$ is an idempotent.
  Then the canonical ring morphism $B\to\End_{eBe}(Be)$ is injective if and only if $j_B(B\div (e))\geq 1$,
  and it is bijective if and only if $j_B(B\div (e))\geq 2$.
\end{corollary}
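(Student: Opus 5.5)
The plan is to specialize \ref{cor:auslander-trace-context} to $\cF(T_A)=\{A\}$ and $\cG=\cF(T_B)$. The latter consists of all right ideals containing $T_B$, by \cite[Chap.\@ VI, Prop.\@ 6.11]{stenstromRingsQuotients1975}, since $T_B$ is idempotent. Then translate the torsion-freeness and closedness conditions on $B$ into grade conditions on $B\div T_B$.

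First, since $T_A=A$, the topology $\cF(T_A)$ is the trivial one. Every $A$-module is therefore $\cF(T_A)$-closed, and in particular $M$ is. Part (3) of \ref{cor:auslander-trace-context}, together with part (1), shows the following:
\begin{itemize}
\item $\psi$ is injective if and only if $B$ is $\cF(T_B)$-torsion-free;
\item $\psi$ is bijective if and only if $B$ is $\cF(T_B)$-closed.
\end{itemize}
By the discussion at the start of the depth subsection, these conditions say $\depth^{\cF(T_B)}(B)\geq 1$ and $\geq 2$, respectively.

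Next, compute this depth using a basis. Since $\cF(T_B)$ consists of the right ideals containing $T_B$, the singleton $\{T_B\}$ is a basis. By \ref{rem:depth-basis},
\[
\depth^{\cF(T_B)}(B)=\inf\{i\mid \Ext_B^i(B\div T_B,B)\neq 0\}=j_B(B\div T_B).
\]
This is the main check, and it requires care on two points. The first is that \ref{rem:depth-basis} genuinely allows a single-element basis: cofinality is clear, and the direct-union argument in \ref{lem:depth-lcd} still applies. The second is the identification $\cF(T)=\{I\supseteq T\}$, which uses that $T_B$ is a two-sided idempotent ideal. If one prefers to avoid the remark, a direct argument also works. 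For $I\supseteq T_B$, the module $A\div I$ is a quotient of $B\div T_B$, and torsion-freeness of $B$ is detected by $\Hom_B(B\div T_B,B)=0$. For the $\Ext^1$ condition one uses the short exact sequence $0\to I\div T_B\to B\div T_B\to B\div I\to 0$. Here $I\div T_B$ is annihilated by $T_B$, so it is a $B\div T_B$-module, hence a quotient of a direct sum of copies of $B\div T_B$; thus its $\Hom$ into $B$ vanishes once $\Hom_B(B\div T_B,B)=0$. Combining these gives both equivalences.

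For the special case, take the standard Morita context
\[
(A,B,M,N,\xi,\zeta)=\bigl(eBe,\,B,\,Be,\,eB,\,\xi,\,\zeta\bigr)
\]
with multiplication maps. Then $T_A=eBeBe=eBe=A$ and $T_B=BeB=(e)$, which is idempotent because $BeB\cdot BeB\supseteq BeeB=BeB$. The canonical morphism $B\to\End_{eBe}(Be)$ is exactly $\psi$, so the first part applies directly.
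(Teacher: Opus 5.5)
Your proposal is correct and is essentially the paper's argument. You specialize \ref{cor:auslander-trace-context} to the trivial topology $\cF(T_A)=\{A\}$ and use that $\{T_B\}$ is a basis of $\cF(T_B)$, which turns $\cF(T_B)$-torsion-freeness and $\cF(T_B)$-closedness of $B$ into $j_B(B\div T_B)\geq 1$ and $j_B(B\div T_B)\geq 2$; the paper says this as $\uR^i\tau(B)\cong\Ext^i_B(B\div T_B,B)$ plus \ref{lem:tf-4-term-exact-seq}, whereas you go through $\cF$-depth and \ref{rem:depth-basis}, which amounts to the same thing. The only slip is a typo in your direct argument: $A\div I$ should read $B\div I$.
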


\begin{proof}
  Since $T_A=A$, $\cF(T_A)=\{A\}$ is the trivial topology;
  hence, $M$ is automatically $\cF(T_A)$-closed.
  Since $T_B$ is idempotent, $\uR^i\tau_{\cF(T_B)}(B)=\Ext^i_B(B\div T_B, B)$.
  The conclusion then follows from \ref{lem:tf-4-term-exact-seq,cor:auslander-trace-context}.
\end{proof}

% 思路1 我们可以考虑非半单的Hopf作用
% 于是我们需要介绍Hopf作用的Morita context
We now return to Hopf actions on algebras and apply these criteria to the Morita context \ref{eq:Auslander-context}.
By \cite[Sect.\@ 4.5]{monHopfAlgebrasTheir1993},
the trace ideals of this context are
$T_A=\im(\hat{t})$ and $T_B=(1\#t)$,
where $\hat{t}\colon S\to S^H, s\mapsto t\cdot s$
is an $S^H$-bimodule map.

If $H$ is semisimple, one may choose $t$ such that $\epsilon(t)=1$.
Then $\hat{t}$ is surjective, and $T_B=(1\#t)$ is idempotent
(\cite[p.\@ 48]{monHopfAlgebrasTheir1993}).
Consequently, $T_A=A$, and \ref{cor:auslander-context-projective} applies.
For a general finite-dimensional Hopf algebra, however,
these properties need not hold.

Recall that a ring $S$ is GK-Cohen--Macaulay if $\GKdim(S)<\infty$ and 
\[j_S(M)+\GKdim(M)=\GKdim(S)\]
for every finitely generated left or right $S$-module $M$
(\cite[p.\@ 989]{staffordHomologicalPropertiesGraded1994}).
Moreover, GK dimension is exact on modules over a GK-Cohen--Macaulay ring.

The following corollary applies \ref{cor:auslander-context} to provide a sufficient condition, formulated
in terms of GK dimension, for the Auslander map to be bijective.
% However, applying \ref{cor:auslander-context} still shows that the GK dimension condition
% ensures that the Auslander map is bijective.

\begin{corollary}\label{cor:nc-auslander-thm}
  Let $S$ be a noetherian $k$-algebra, and let $H$ be a finite-dimensional Hopf algebra over $k$ acting on $S$
  such that $\hat{t}\neq 0$.
  Suppose that $S$ is GK-Cohen--Macaulay of GK dimension $d\geq 2$,
  and has no zero-divisors.
  Then the Auslander map $S\#H\to\End_{S^H}(S)$ is an isomorphism
  if $\GKdim(S\#H\div (1\#t))\leq d-2$.
\end{corollary}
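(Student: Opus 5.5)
We will apply \ref{cor:auslander-trace-context} to the Morita context \ref{eq:Auslander-context}, with $A=S^H$, $B=S\#H$, $M=S$, $T_A=\im(\hat t)$ and $T_B=(1\#t)$. By part (3) of \ref{cor:auslander-trace-context}, it suffices to prove two things:
\begin{enumerate}
  \item $M=S$ is $\cF(T_A)$-closed as a right $S^H$-module;
  \item $B$ is $\cF(T_B)$-closed as a right $B$-module.
\end{enumerate}
(Alternatively, using part (2), it is enough that $M$ is $\cF(T_A)$-torsion-free and that $B$ is $\cF(T_B)$-closed.) We plan to use the weaker version, so that only torsion-freeness of $S$ over $S^H$ is needed.

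\textbf{Step 1: $S$ is $\cF(T_A)$-torsion-free.} Recall that $X$ is $\cF(T_A)$-torsion-free if and only if $\Ann_X(T_A)=0$. Since $\hat t\neq 0$, the ideal $T_A$ contains a nonzero element $a=t\cdot s\in S^H\subseteq S$. Because $S$ is a domain, $xa=0$ forces $x=0$. Hence $\Ann_S(T_A)=0$, and this step is immediate.

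\textbf{Step 2: $B$ is $\cF(T_B)$-closed.} By the paper's characterization, this amounts to $\depth^{\cF(T_B)}(B)\geq 2$.

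Since $S$ is noetherian and $H$ is finite-dimensional, $B=S\#H$ is noetherian. Therefore $T_B$ is finitely generated as a right ideal, and $\{T_B^n\}$ is a basis of $\cF(T_B)$. By \ref{rem:depth-basis}, it then suffices to show
\[
  \Ext^i_B(B/T_B^n, B)=0 \quad\text{for } i=0,1 \text{ and all } n.
\]

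To obtain this, we transfer GK-Cohen--Macaulayness from $S$ to $B$. The module $B$ is free of finite rank over $S$ on both sides, and restriction of scalars from $B$ to $S$ preserves GK dimension. Since $B_S$ is free of finite rank, for a finitely generated $B$-module $X$ we have
\[
  \Ext^i_B(X,B)\hookrightarrow\Ext^i_S(X,B)\cong\Ext^i_S(X,S)^{\oplus\dim H}.
\]
The key point is to justify this injection. We would try to justify it via the adjunction $\Hom_B(X,\Hom_S(B,S))$ together with the fact that $B\cong\Hom_S(B,S)$ up to a twist coming from the Frobenius property of $H$. Failing that, we would fall back on a direct argument: the map $X\otimes_B B\to X\otimes_S B$ splits up to the integral.

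Granting the injection, we argue as follows. The GK dimension of $B/T_B^n$ is at most $d-2$, because it has a finite filtration by subquotients of $(B/T_B)^{\oplus}$, each of GK dimension at most $d-2$. Since $S$ is GK-Cohen--Macaulay, $j_S(B/T_B^n)\geq 2$. Hence $\Ext^i_S(B/T_B^n,S)=0$ for $i\leq 1$, and so the same vanishing holds over $B$.

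\textbf{Main obstacle.} The main obstacle is precisely this comparison of $\Ext^i_B(\blank,B)$ with $\Ext^i_S(\blank,S)$, which must hold without semisimplicity of $H$.

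A cleaner route, which we would try first, is to show directly that $B$ is GK-Cohen--Macaulay of dimension $d$. This uses three facts:
\begin{itemize}
  \item $\Hom_S(B_S,S_S)\cong B$ as $B$-$S$-bimodules, up to a twist coming from the Frobenius property of $H$;
  \item consequently $\Ext^i_B(X,B)\cong\Ext^i_S(X,S)$ up to twist;
  \item $\GKdim_B X=\GKdim_S X$.
\end{itemize}
With this in hand, $j_B(B/T_B^n)=d-\GKdim(B/T_B^n)\geq 2$, and Step 2 follows at once.

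Finally, combining Step 1 and Step 2 with \ref{cor:auslander-trace-context}(2) yields the bijectivity of the Auslander map.
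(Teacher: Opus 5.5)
Your proposal is correct and is essentially the paper's proof: Step~1 is exactly the paper's argument that $M=S$ is $\cF(T_A)$-torsion-free, and your ``cleaner route'' is precisely how the paper concludes before invoking part~(2) of \ref{cor:auslander-trace-context}. In that route, the Frobenius extension $S\subseteq S\#H$ gives $j_B=j_S$ and $\GKdim_B=\GKdim_S$, so $B$ is GK-Cohen--Macaulay of dimension $d$, and then \ref{rem:depth-basis} with the basis $\{T_B^n\}$ shows that $B$ is $\cF(T_B)$-closed. You can drop the tentative injection route altogether: splitting $X\otimes_S B\to X$ ``up to the integral'' is a Maschke-type argument that is generally unavailable for non-semisimple $H$, and it is not needed.
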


\begin{proof}
  Consider the Morita context \ref{eq:Auslander-context}.
  Since $\hat{t}\neq 0$, the trace ideal $T_A=\im(\hat{t})$ is nonzero.
  Since $S$ has no zero-divisors,
  it follows that $\Ann_{M}(T_A)=0$; that is, $M$ is $\cF(T_A)$-torsion-free.

  It is standard that $B=S\# H$ is noetherian and a Frobenius extension of $S$.
  Therefore, for any finitely generated $B$-module $Y$, one has $\GKdim_B(Y)=\GKdim_S(Y)$ and $j_B(Y)=j_S(Y)$ (see \cite[Lem.\@ 1.26]{zhuAuslanderTheoremPI2023}).
  It follows that $B$ is GK-Cohen--Macaulay of GK dimension $d$.
  Since $B$ is right noetherian, $\{T_B^n\mid n\in\NN\}$ is a basis of $\cF(T_B)$.
  Consequently, by \ref{rem:depth-basis},
  $B$ is $\cF(T_B)$-closed if and only if $\Hom_{B}(B\div T_B^n, B)=0=\Ext_B^1(B\div T_B^n, B)$ for any $n\in\NN$;  equivalently, 
  if and only if $j_B(B\div T_B^n)\geq 2$ for any $n\in\NN$;  equivalently, 
  if and only if $\GKdim(B\div T_B)\leq d-2$.
  Hence, $B$ is $\cF(T_B)$-closed, and the result follows from \ref{cor:auslander-trace-context}.
\end{proof}

\ref{main-thm-auslander} follows immediately,
since $\hat{t}\neq 0$ for every nonzero left integral $t$,
provided that $S$ is a faithful left $H$-module.

% Example 1 in postive char
The Auslander map for permutation group actions was studied in \cite{gaddisAuslandersTheoremPermutation2019}
in characteristic zero. The following provides an example in the modular case.

\begin{example}\label{ex:auslander-1}
  Suppose that $\charOp(k)=p>2$.
  Let $S$ be the standard graded $(-1)$-skew polynomial algebra of dimension $n$.
  Then $S$ is a GK-Cohen--Macaulay Artin--Schelter regular domain of dimension $n$.
  Let $G$ be a subgroup of the symmetric group on $n$ letters acting on $S$ by permuting the indeterminates,
  and suppose that $p\mid|G|$.
  Note that the computation in \cite[Lem.\@ 2.1 and Prop.\@ 2.3]{gaddisAuslandersTheoremPermutation2019} remains valid whenever $\charOp(k)\neq 2$.
  % Suppose $H=kC_3=k\langle\sigma\rangle$ acts on $S$ by $\sigma(x)=y, \sigma(y)=z, \sigma(z)=x$.
  % By computation, \[(x^2+y^2+z^2)^2, ((y^2-x^2)(z^2-y^2)(x^2-z^2))^2\] is a central regular sequence
  % in $(1\# t)\subseteq S\# H$.
  Therefore, $\GKdim(S\#kG\div (1\#t))\leq n-2$,
  and by the preceding corollary, the Auslander map is bijective. 
\end{example}

% Example 2 in char 0
In the following example, we consider a Taft algebra action on a skew polynomial algebra in characteristic zero.
This serves as a special case of a broader class of actions first studied 
in \cite{clineActionsQuantumLinear2020}.
\begin{example}\label{ex:auslander-2}
  Suppose that $k$ has characteristic $0$ and that $\lambda \in k$ is a primitive third root of unity.
  Let $H$ be the Taft algebra generated by $g$ and $p$, subject to the relations
  \[g^3=1, \quad p^3=0, \quad gp=\lambda pg,\]
  where $g$ is a group-like element and $p$ is a skew-primitive element satisfying $\Delta(p)=p\otimes1+g\otimes p$.
  Let $S$ be the skew polynomial algebra
  \[S=k\langle x,y,z\rangle\div(yx-\lambda xy, zx-\lambda^2xz, zy-\lambda^2yz).\]
  Then $S$ is a three-dimensional Artin--Schelter regular domain that is GK-Cohen--Macaulay.
 By \cite[Exam.\@ 3.8]{clineActionsQuantumLinear2020}, $H$ acts homogeneously on $S$ via
  \[g\rightharpoonup (x,y,z)=(\lambda^2x, \lambda y, z),\quad p\rightharpoonup (x,y,z)=(0,x,y).\]
  For this action, the Auslander map $S\#H\to\End_{S^H}(S)$ is bijective.
\end{example}

\begin{proof}
  Let $t=3^{-1}(1+g+g^2)p^2\in\int^l_H$.
  A direct computation yields $\hat{t}(x^2z)=\lambda^2x^3\neq 0$; hence, $\hat{t}\neq 0$.
  By \ref{cor:nc-auslander-thm}, it suffices to show that $\GKdim(S\#H\div (1\#t))\leq 1$.

  Clearly, $k[x^3, y^3, z^3]\subseteq Z(S)\cap S^H$,
  and is therefore a central subalgebra of $S\#H$.
  Since $\{x^6, y^{18}\}$ forms a central regular sequence in $S$, and $S\#H$ is finite free over $S$,
  it follows that $\{x^6\# 1, y^{18}\# 1\}$ forms a central regular sequence in $S\#H$.

  It remains to prove that $x^6\# 1, y^{18}\# 1\in T_B=(1\# t)$.
  Throughout the following computations, $s\in S$ and $h\in H$ are identified with $s\# 1$ and $1\# h$ in $S\# H$, respectively.
  All products are taken in $S\# H$;
  for example, $hs$ denotes $(1\#h)(s\# 1)$ and is distinct from the action $h\rightharpoonup s$.
  
  Let $e_j=3^{-1}\sum_{i=0}^2\lambda^{-ij}g^i$
  for $j\in\ZZ\div 3\ZZ$.
  Note that $t=e_0p^2$, $e_0+e_1+e_2=1$, and $e_0x^j=x^je_j$.
  By the construction of the action, $px=\lambda^2xp$ and $py=x+\lambda yp$.
  It follows that
  \[T_B\ni x^{2-i}tx^i=x^{2-i}e_0p^2x^i=\lambda^ix^{2-i}e_0x^ip^2=\lambda^ix^2e_ip^2, \text{ for }i=0, 1, 2,\]
  and therefore $x^2p^2=\sum_{i=0}^2\lambda^{-i}x^{2-i}tx^i\in T_B$.
  Note that $p^2y=\lambda^2yp^2-xp$.
  Thus, 
  \[T_B\ni x^2p^2y-yx^2p^2=-x^3p \quad\text{and}\quad T_B\ni x^3py-\lambda yx^3p=x^4.\]
  Hence, $x^4\in T_B$ and, consequently, $x^6\in T_B$.

  In what follows, we work in $\overline{B}=B\div (x)$.
  \newcommand{\op}{\overline{p}}\newcommand{\oy}{\overline{y}}\newcommand{\oz}{\overline{z}}%
  \newcommand{\ot}{\overline{t}}\newcommand{\ooe}{\overline{e}}%
  We have $\op\oy=\lambda \oy\op$, $\op\oz=\oy+\oz\op$,
  and $\ooe_0\oy^i=\oy^i\ooe_{-i}$.
  Therefore, as in the preceding paragraph, \[\overline{T_B}\ni \oy^{2-i}\ot\oy^i=\oy^{2-i}\ooe_0\op^2\oy^i=\lambda^{2i}\oy^2\ooe_{-i}\op^2,\text{ for }i=0,1,2,\]
  and hence $\oy^2\op^2\in\overline{T_B}$.
  Note that $\op^2\oz=\oz\op^2-\lambda^2\oy\op$.
  It follows that
  \[\overline{T_B}\ni \oy^2\op^2\oz-\lambda^2\oz\oy^2\op^2=-\lambda^2\oy^3\op
  \quad\text{and}\quad
  \overline{T_B}\ni \oy^3\op\oz-\oz\oy^3\op=\oy^4,\]
  and hence $y^4\in T_B+(x)$.
  Note that $x\in B$ is normal; hence $(x)^4\subseteq (x^4)\subseteq T_B$.
  It follows that $y^{16}\in T_B$, and consequently $y^{18}\in T_B$.
  Therefore, $\GKdim(S\#H\div (1\#t))\leq 1$, from which it follows that the Auslander map is bijective.
\end{proof}

  % !TeX root = ../main.tex

\subsection*{AI Statement}
The authors used AI tools to refine the language of this manuscript.
AI also assisted with the computations in \ref{ex:auslander-2},
which were verified independently by the authors.

% \subsection*{Acknowledgment}

  \appendix

  \myprintbibliography

@article{artinNoncommutativeProjectiveSchemes1994,
  title = {Noncommutative {{Projective Schemes}}},
  author = {Artin, Michael and Zhang, James Jian},
  date = {1994-12-01},
  journaltitle = {Advances in Mathematics},
  shortjournal = {Adv. Math.},
  volume = {109},
  number = {2},
  pages = {228--287},
  doi = {10.1006/aima.1994.1087},
  url = {https://www.sciencedirect.com/science/article/pii/S0001870884710875},
  urldate = {2023-03-17},
  langid = {english},
  mrnumber = {1304753}
}

@article{auslanderRepresentationTheoryArtin1974,
  title = {Representation {{Theory}} of {{Artin Algebras I}}},
  author = {Auslander, Maurice},
  date = {1974-01-01},
  journaltitle = {Communications in Algebra},
  shortjournal = {Comm. Algebra},
  volume = {1},
  number = {3},
  pages = {177--268},
  publisher = {Taylor \& Francis},
  doi = {10.1080/00927877408548230},
  url = {https://doi.org/10.1080/00927877408548230},
  urldate = {2024-12-17},
  langid = {english},
  mrnumber = {0349747}
}

@article{baoNoncommutativeAuslanderTheorem2018,
  title = {Noncommutative {{Auslander}} Theorem},
  author = {Bao, Yan-Hong and He, Ji-Wei and Zhang, James Jian},
  date = {2018-12},
  journaltitle = {Transactions of the American Mathematical Society},
  shortjournal = {Trans. Amer. Math. Soc.},
  volume = {370},
  number = {12},
  pages = {8613--8638},
  doi = {10.1090/tran/7332},
  url = {https://www.ams.org/tran/2018-370-12/S0002-9947-2018-07332-9/},
  urldate = {2023-04-04},
  langid = {english},
  mrnumber = {3864389}
}

@article{baoPertinencyHopfActions2019,
  title = {Pertinency of {{Hopf}} Actions and Quotient Categories of {{Cohen-Macaulay}} Algebras},
  author = {Bao, Yan-Hong and He, Ji-Wei and Zhang, James Jian},
  date = {2019},
  journaltitle = {Journal of Noncommutative Geometry},
  shortjournal = {J. Noncommut. Geom.},
  volume = {13},
  number = {2},
  pages = {667--710},
  doi = {10.4171/JNCG/336},
  url = {https://mathscinet.ams.org/mathscinet-getitem?mr=3988759},
  urldate = {2023-04-28},
  mrnumber = {3988759}
}

@inproceedings{buchweitzMoritaContextsIdempotents2003,
  title = {Morita Contexts, Idempotents, and {{Hochschild}} Cohomology---with Applications to Invariant Rings},
  booktitle = {Commutative Algebra},
  author = {Buchweitz, Ragnar-Olaf},
  date = {2003},
  series = {Contemporary {{Mathematics}}},
  number = {331},
  pages = {25--53},
  publisher = {American Mathematical Society},
  location = {Providence, Rhode Island},
  doi = {10.1090/conm/331/05901},
  eventdate = {2001-07},
  eventtitle = {International {{Conference}}},
  mrnumber = {2011764},
  shortseries = {Comtemp. Math.},
  venue = {Grenoble}
}

@article{chanMcKayCorrespondenceSemisimple2018,
  title = {{{McKay}} Correspondence for Semisimple {{Hopf}} Actions on Regular Graded Algebras, {{I}}},
  author = {Chan, Kenneth and Kirkman, Ellen Elizabeth and Walton, Chelsea M. and Zhang, James Jian},
  date = {2018-08-15},
  journaltitle = {Journal of Algebra},
  shortjournal = {J. Algebra},
  volume = {508},
  pages = {512--538},
  doi = {10.1016/j.jalgebra.2018.05.008},
  url = {https://www.sciencedirect.com/science/article/pii/S0021869318303016},
  urldate = {2023-05-26},
  langid = {english},
  mrnumber = {3810305}
}

@article{chanMcKayCorrespondenceSemisimple2019,
  title = {{{McKay}} Correspondence for Semisimple {{Hopf}} Actions on Regular Graded Algebras. {{II}}},
  author = {Chan, Kenneth and Kirkman, Ellen Elizabeth and Walton, Chelsea M. and Zhang, James Jian},
  date = {2019-03-11},
  journaltitle = {Journal of Noncommutative Geometry},
  shortjournal = {J. Noncommut. Geom.},
  volume = {13},
  number = {1},
  pages = {87--114},
  doi = {10.4171/jncg/305},
  url = {https://ems.press/journals/jncg/articles/16076},
  urldate = {2025-07-17},
  langid = {english},
  mrnumber = {3941474}
}

@article{clineActionsQuantumLinear2020,
  title = {Actions of Quantum Linear Spaces on Quantum Algebras},
  author = {Cline, Zachary and Gaddis, Jason},
  date = {2020-08-15},
  journaltitle = {Journal of Algebra},
  shortjournal = {Journal of Algebra},
  volume = {556},
  pages = {246--286},
  doi = {10.1016/j.jalgebra.2020.03.011},
  url = {https://www.sciencedirect.com/science/article/pii/S0021869320301411},
  urldate = {2026-09-11},
  mrnumber = {4082335}
}

@book{faithAlgebraRingsModules1973,
  title = {Algebra: {{Rings}}, {{Modules}} and {{Categories I}}},
  author = {Faith, Carl},
  date = {1973},
  series = {Die {{Grundlehren}} Der Mathematischen {{Wissenschaften}}},
  number = {190},
  publisher = {Springer-Verlag},
  location = {Berlin, Germany},
  doi = {10.1007/978-3-642-80634-6},
  url = {http://link.springer.com/10.1007/978-3-642-80634-6},
  urldate = {2024-12-14},
  langid = {english},
  mrnumber = {0366960},
  pagetotal = {xxiii+565 pp.},
  shortseries = {Grundlehren Math. Wiss.}
}

@article{gaddisAuslandersTheoremPermutation2019,
  title = {Auslander's {{Theorem}} for Permutation Actions on Noncommutative Algebras},
  author = {Gaddis, Jason and Kirkman, Ellen and Moore, W. and Won, Robert},
  date = {2019-05},
  journaltitle = {Proceedings of the American Mathematical Society},
  shortjournal = {Proc. Amer. Math. Soc.},
  volume = {147},
  number = {5},
  pages = {1881--1896},
  doi = {10.1090/proc/14363},
  url = {https://www.ams.org/proc/2019-147-05/S0002-9939-2019-14363-X/},
  urldate = {2023-06-02},
  langid = {english},
  mrnumber = {3937667}
}

@article{hePreresolutionsNoncommutativeIsolated2022,
  title = {Preresolutions of Noncommutative Isolated Singularities},
  author = {He, Ji-Wei and Ye, Yu},
  date = {2022-04-06},
  journaltitle = {Pacific Journal of Mathematics},
  shortjournal = {Pacific J. Math.},
  volume = {316},
  number = {2},
  pages = {367--394},
  publisher = {Mathematical Sciences Publishers},
  doi = {10.2140/pjm.2022.316.367},
  url = {https://msp.org/pjm/2022/316-2/p06.xhtml},
  urldate = {2024-09-06},
  langid = {english},
  mrnumber = {4404004}
}

@book{jacBasicAlgebraII1989,
  title = {Basic {{Algebra}}},
  author = {Jacobson, Nathan},
  date = {1989},
  edition = {2},
  volume = {2},
  publisher = {{W. H. Freeman and Company}},
  location = {New York, New York},
  langid = {english},
  mrnumber = {1009787},
  pagetotal = {xviii+686}
}

@book{kashiwaraCategoriesSheaves2006,
  title = {Categories and {{Sheaves}}},
  author = {Kashiwara, Masaki and Schapira, Pierre},
  editor = {Berger, M. and Eckmann, B. and De La Harpe, P. and Hirzebruch, F. and Hitchin, N. and H\"ormander, L. and Knus, M.-A. and Kupiainen, A. and Lebeau, G. and Ratner, M. and Serre, D. and family=Sinai, given=Ya. G., given-i={{Ya}}G and Sloane, N.J.A. and Totaro, B. and Vershik, A. and Waldschmidt, M.},
  editortype = {redactor},
  date = {2006},
  series = {Grundlehren Der Mathematischen {{Wissenschaften}}},
  number = {332},
  publisher = {Springer},
  location = {Berlin, Heidelberg},
  doi = {10.1007/3-540-27950-4},
  url = {http://link.springer.com/10.1007/3-540-27950-4},
  urldate = {2024-12-16},
  mrnumber = {2182076},
  shortseries = {Grundlehren Math. Wiss.}
}

@book{lazarsfeldPositivityAlgebraicGeometry2004,
  title = {Positivity in {{Algebraic Geometry I}}},
  author = {Lazarsfeld, Robert Kendall},
  date = {2004},
  publisher = {Springer},
  location = {Berlin, Heidelberg},
  doi = {10.1007/978-3-642-18808-4},
  url = {https://link.springer.com/10.1007/978-3-642-18808-4},
  urldate = {2026-08-20},
  mrnumber = {2095471}
}

@book{leuschkeCohenMacaulayRepresentations2012,
  title = {Cohen--{{Macaulay Representations}}},
  author = {Leuschke, Graham J. and Wiegand, Roger},
  date = {2012-05-02},
  series = {Mathematical {{Surveys}} and {{Monographs}}},
  number = {181},
  publisher = {American Mathematical Society},
  location = {Providence, Rhode Island},
  doi = {10.1090/surv/181},
  url = {https://www.ams.org/surv/181},
  urldate = {2024-09-27},
  langid = {english},
  mrnumber = {2919145},
  pagetotal = {xviii+367},
  shortseries = {Math. Surveys Monogr.}
}

@article{liNoncommutativeResolutionsASGorenstein2026,
  title = {Noncommutative Resolutions of {{AS-Gorenstein}} Isolated Singularities},
  author = {Li, Hao-Nan and Shen, Men-Da and Wu, Quan-Shui},
  date = {2026-06-15},
  journaltitle = {Journal of Noncommutative Geometry},
  shortjournal = {J. Noncommut. Geom.},
  doi = {10.4171/jncg/670},
  url = {https://ems.press/journals/jncg/articles/14299817},
  urldate = {2026-09-30},
  langid = {english},
  publication-number = {670},
  note = {online first}
}

@online{liNoncommutativeResolutionsNoncommutative2026,
  title = {Noncommutative Resolutions of Noncommutative Isolated Singularities},
  author = {Li, Hao-Nan and Wu, Quan-Shui},
  date = {2026-09-18},
  eprint = {2609.21409},
  eprinttype = {arXiv},
  eprintclass = {math.RA},
  doi = {10.48550/arXiv.2609.21409},
  url = {http://arxiv.org/abs/2609.21409},
  urldate = {2026-09-24},
  pubstate = {prepublished}
}

@thesis{louInvariantTheoryRegular2016,
  type = {phdthesis},
  title = {Invariant Theory of AS Regular Algebras and Co-Poisson Structures},
  author = {Lou, Qi},
  date = {2016-04-01},
  institution = {Fudan University},
  location = {Shanghai, China},
  langid = {chinese},
  pagetotal = {vi+104}
}

@article{minamotoAmplenessTwosidedTilting2012,
  title = {Ampleness of Two-Sided Tilting Complexes},
  author = {Minamoto, Hiroyuki},
  date = {2012-01-01},
  journaltitle = {International Mathematics Research Notices},
  shortjournal = {Int. Math. Res. Not. IMRN},
  volume = {2012},
  number = {1},
  pages = {67--101},
  doi = {10.1093/imrn/rnr001},
  url = {https://doi.org/10.1093/imrn/rnr001},
  urldate = {2024-03-28},
  langid = {english},
  mrnumber = {2874928}
}

@book{monHopfAlgebrasTheir1993,
  title = {Hopf {{Algebras}} and {{Their Actions}} on {{Rings}}},
  author = {Montgomery, Susan},
  date = {1993-10-28},
  series = {{{CBMS Regional Conference Series}} in {{Mathematics}}},
  volume = {82},
  publisher = {American Mathematical Society},
  doi = {10.1090/cbms/082},
  langid = {english},
  mrnumber = {1243637}
}

@article{moriAmpleGroupAction2016,
  title = {Ample Group Action on {{AS-regular}} Algebras and Noncommutative Graded Isolated Singularities},
  author = {Mori, Izuru and Ueyama, Kenta},
  date = {2016-10},
  journaltitle = {Transactions of the American Mathematical Society},
  shortjournal = {Trans. Amer. Math. Soc.},
  volume = {368},
  number = {10},
  pages = {7359--7383},
  doi = {10.1090/tran/6580},
  url = {https://www.ams.org/tran/2016-368-10/S0002-9947-2015-06580-5/},
  urldate = {2023-05-24},
  langid = {english},
  mrnumber = {3471094}
}

@article{mullerQuotientCategoryMorita1974,
  title = {The Quotient Category of a {{Morita}} Context},
  author = {M\"uller, Bruno J},
  date = {1974-03-01},
  journaltitle = {Journal of Algebra},
  shortjournal = {J. Algebra},
  volume = {28},
  number = {3},
  pages = {389--407},
  doi = {10.1016/0021-8693(74)90048-9},
  url = {https://www.sciencedirect.com/science/article/pii/0021869374900489},
  urldate = {2024-12-04},
  langid = {english},
  mrnumber = {0447336}
}

@book{nastasescuGradedRingTheory1982,
  title = {Graded Ring Theory},
  author = {N\u ast\u asescu, Constantin and family=Oystaeyen, given=Freddy M. J., prefix=van, useprefix=true},
  date = {1982},
  series = {North-{{Holland Mathematical Library}}},
  number = {28},
  publisher = {North-Holland Publishing Co.},
  location = {Amsterdam, Holland},
  mrnumber = {0676974},
  pagetotal = {ix+340},
  shortseries = {North-Holland Math. Library}
}

@book{popescuAbelianCategoriesApplications1973,
  title = {Abelian Categories with Applications to Rings and Modules},
  author = {Popescu, Nicolae},
  date = {1973},
  series = {London {{Mathematical Society Monographs}}},
  number = {3},
  publisher = {Academic Press},
  location = {London, England},
  langid = {english},
  mrnumber = {0340375},
  pagetotal = {xii+467},
  shortseries = {London Math. Soc. Monogr.}
}

@article{popescuCaracterisationCategoriesAbeliennes1964,
  title = {Caract\'erisation des cat\'egories ab\'eliennes avec g\'en\'erateurs et limites inductives exactes},
  author = {Popescu, Nicolae and Gabriel, Pierre},
  date = {1964},
  journaltitle = {Comptes Rendus Hebdomadaires des S\'eances de l'Acad\'emie des Sciences},
  shortjournal = {C. R. Acad. Sci. Paris},
  volume = {258},
  number = {1},
  pages = {4188--4190},
  langid = {french},
  mrnumber = {0166241}
}

@article{qinNoncommutativeQuasiresolutions2019,
  title = {Noncommutative Quasi-Resolutions},
  author = {Qin, Xiao-Shan and Wang, Yan-Hua and Zhang, James Jian},
  date = {2019-10-15},
  journaltitle = {Journal of Algebra},
  shortjournal = {J. Algebra},
  volume = {536},
  pages = {102--148},
  doi = {10.1016/j.jalgebra.2019.07.015},
  url = {https://www.sciencedirect.com/science/article/pii/S0021869319303904},
  urldate = {2024-09-09},
  mrnumber = {3989612}
}

@article{serreFaisceauxAlgebriquesCoherents1955,
  title = {Faisceaux Alg\'ebriques Coh\'erents},
  author = {Serre, Jean-Pierre},
  date = {1955},
  journaltitle = {Annals of Mathematics. Second Series},
  shortjournal = {Ann. of Math. (2)},
  volume = {61},
  number = {2},
  pages = {197--278},
  publisher = {[Annals of Mathematics, Trustees of Princeton University on Behalf of the Annals of Mathematics, Mathematics Department, Princeton University]},
  doi = {10.2307/1969915},
  url = {https://www.jstor.org/stable/1969915},
  urldate = {2025-07-03},
  langid = {french},
  mrnumber = {0068874},
  shorthand = {FAC},
  sortkey = {FAC}
}

@article{smithCorrigendumMapsNonCommutative2016,
  title = {Corrigendum to ``{{Maps Between Non-Commutative Spaces}}''},
  author = {Smith, S. Paul},
  date = {2016-11},
  journaltitle = {Transactions of the American Mathematical Society},
  shortjournal = {Trans. Amer. Math. Soc.},
  volume = {368},
  number = {11},
  pages = {8295--8302},
  doi = {10.1090/tran/6908},
  url = {https://www.ams.org/tran/2016-368-11/S0002-9947-2016-06908-1/},
  urldate = {2024-10-22},
  langid = {english},
  mrnumber = {3546801}
}

@article{staffordHomologicalPropertiesGraded1994,
  title = {Homological {{Properties}} of ({{Graded}}) {{Noetherian PI Rings}}},
  author = {Stafford, J. Toby and Zhang, James Jian},
  date = {1994-09-15},
  journaltitle = {Journal of Algebra},
  shortjournal = {J. Algebra},
  volume = {168},
  number = {3},
  pages = {988--1026},
  doi = {10.1006/jabr.1994.1267},
  url = {https://www.sciencedirect.com/science/article/pii/S0021869384712671},
  urldate = {2023-04-09},
  langid = {english},
  mrnumber = {1293638}
}

@book{stenstromRingsQuotients1975,
  title = {Rings of {{Quotients}}},
  author = {Stenstr\"om, Bo},
  date = {1975},
  series = {Die {{Grundlehren}} Der Mathematischen {{Wissenschaften}}},
  number = {217},
  publisher = {Springer-Verlag},
  location = {Berlin, Germany},
  doi = {10.1007/978-3-642-66066-5},
  langid = {english},
  mrnumber = {0389953},
  pagetotal = {viii+309 pp.},
  shortseries = {Grundlehren Math. Wiss.}
}

@article{ueyamaNoncommutativeGradedAlgebras2015,
  title = {Noncommutative Graded Algebras of Finite {{Cohen}}--{{Macaulay}} Representation Type},
  author = {Ueyama, Kenta},
  date = {2015-09},
  journaltitle = {Proceedings of the American Mathematical Society},
  shortjournal = {Proc. Amer. Math. Soc.},
  volume = {143},
  number = {9},
  pages = {3703--3715},
  doi = {10.1090/proc/12527},
  url = {https://www.ams.org/proc/2015-143-09/S0002-9939-2015-12527-0/},
  urldate = {2023-05-31},
  langid = {english},
  mrnumber = {3359563}
}

@inproceedings{vandenberghNoncommutativeCrepantResolutions2004,
  title = {Non-Commutative {{Crepant Resolutions}}},
  booktitle = {The {{Legacy}} of {{Niels Henrik Abel}}},
  author = {family=Bergh, given=Michel, prefix=Van den, useprefix=true},
  editor = {Laudal, Olav Arnfinn and Piene, Ragni},
  date = {2004},
  pages = {749--770},
  publisher = {Springer},
  location = {Berlin, Heidelberg},
  doi = {10.1007/978-3-642-18908-1_26},
  url = {https://doi.org/10.1007/978-3-642-18908-1_26},
  urldate = {2024-01-02},
  eventdate = {2002},
  eventtitle = {Abel {{Bicentennial Conference}}},
  langid = {english},
  mrnumber = {2077594},
  shortauthor = {VdB},
  venue = {Oslo}
}

@article{vandenberghThreedimensionalFlopsNoncommutative2004,
  title = {Three-Dimensional Flops and Noncommutative Rings},
  author = {family=Bergh, given=Michel, prefix=Van den, useprefix=true},
  date = {2004-04},
  journaltitle = {Duke Mathematical Journal},
  shortjournal = {Duke Math. J.},
  volume = {122},
  number = {3},
  pages = {423--455},
  publisher = {Duke University Press},
  doi = {10.1215/S0012-7094-04-12231-6},
  url = {https://projecteuclid.org/journals/duke-mathematical-journal/volume-122/issue-3/Three-dimensional-flops-and-noncommutative-rings/10.1215/S0012-7094-04-12231-6.full},
  urldate = {2023-06-07},
  langid = {english},
  mrnumber = {2057015},
  shortauthor = {VdB}
}

@article{zhuAuslanderTheoremPI2023,
  title = {Auslander Theorem for {{PI Artin--Schelter}} Regular Algebras},
  author = {Zhu, Rui-Peng},
  date = {2023-09},
  journaltitle = {Proceedings of the American Mathematical Society},
  shortjournal = {Proc. Amer. Math. Soc.},
  volume = {151},
  number = {09},
  pages = {3705--3719},
  doi = {10.1090/proc/16424},
  url = {https://www.ams.org/proc/2023-151-09/S0002-9939-2023-16424-2/},
  urldate = {2025-05-17},
  langid = {english},
  mrnumber = {4607617}
}
\end{document}